\def\bfB{\mathbf{B}}
\newcommand{\Mat}{\operatorname{M}}
\newcommand{\End}{\operatorname{End}}
\newcommand{\car}{\chi}
\newcommand{\id}{\operatorname{id}}
\newcommand{\GL}{\operatorname{GL}}
\newcommand{\Ker}{\operatorname{Ker}}
\newcommand{\Irr}{\operatorname{Irr}}
\newcommand{\Vect}{\operatorname{span}}
\newcommand{\im}{\operatorname{Im}}
\newcommand{\tr}{\operatorname{tr}}
\newcommand{\Co}{\operatorname{Co}}
\newcommand{\Tor}{\operatorname{Tor}}
\newcommand{\Span}{\operatorname{Span}}
\renewcommand{\setminus}{\smallsetminus}
\def\F{\mathbb{F}}
\def\K{\mathbb{K}}
\def\N{\mathbb{N}}
\def\Z{\mathbb{Z}}
\def\calA{\mathcal{A}}
\def\calB{\mathcal{B}}
\def\calM{\mathcal{M}}
\def\calR{\mathcal{R}}
\def\lcro{\mathopen{[\![}}
\def\rcro{\mathclose{]\!]}}
\theoremstyle{definition}
\newtheorem{Def}{Definition}[section]
\newtheorem{Not}[Def]{Notation}
\theoremstyle{plain}
\newtheorem{theo}{Theorem}[section]
\newtheorem{prop}[theo]{Proposition}
\newtheorem{cor}[theo]{Corollary}
\newtheorem{lemma}[theo]{Lemma}
\theoremstyle{plain}
\newtheorem{conj}{Conjecture}
\theoremstyle{remark}
\newtheorem{Rems}{Remarks}
\newtheorem{Rem}[Rems]{Remark}
\title{Sums and products of two quadratic endomorphisms of a countable-dimensional vector space}
\author{Cl\'ement de Seguins Pazzis\footnote{Universit\'e de Versailles Saint-Quentin-en-Yvelines, Laboratoire de Math\'ematiques
de Versailles, 45 avenue des Etats-Unis, 78035 Versailles cedex, France}
\footnote{e-mail address: dsp.prof@gmail.com}}
\begin{document}

\thispagestyle{plain}

\maketitle

\begin{abstract}
Let $V$ be a vector space with countable dimension over a field, and let
$u$ be an endomorphism of it which is locally finite, i.e.\ $(u^k(x))_{k \geq 0}$ is linearly dependent for all $x$ in $V$.
We give several necessary and sufficient conditions for the decomposability of $u$ into the sum of two square-zero endomorphisms.
Moreover, if $u$ is invertible, we give necessary and sufficient conditions for the decomposability of $u$
into the product of two involutions, as well as for the decomposability of $u$ into the product of two unipotent endomorphisms of index $2$.
Our results essentially extend the ones that are known in the finite-dimensional setting.

In particular, we obtain that every strictly upper-triangular infinite matrix with entries in a field is the sum of two square-zero infinite matrices
(potentially non-triangular, though), and that every upper-triangular infinite matrix (with entries in a field) with only $\pm 1$ on the diagonal
is the product of two involutory infinite matrices.
\end{abstract}

\vskip 2mm
\noindent
\emph{AMS Classification: 15A23}

\vskip 2mm
\noindent
\emph{Keywords:} Decomposition, Square-zero endomorphisms, Involutions, Spaces with countable dimension, Kaplansky invariants, Infinite matrices.

\section{Introduction}

\subsection{The problem}\label{problemSection}

Throughout, $\F$ denotes an arbitrary field whose characteristic we denote by $\chi(\F)$. Given a non-negative integer $n$, we denote by $\Mat_n(\F)$ the algebra of all $n$ by $n$ matrices with entries in $\F$. Given a vector space $V$ over $\F$, we denote by $\End(V)$
the algebra of all endomorphisms of $V$, and by $\GL(V)$ its group of invertible elements. Following the French convention, we denote by $\N$ the set of all non-negative integers.

Let $\calA$ be an $\F$-algebra.
An element $a \in \calA$ is called \textbf{quadratic} when it is annihilated by a polynomial of $\F[t]$ with degree $2$.
Among such elements are the \textbf{square-zero elements} ($a^2=0_\calA$), the \textbf{involutions}
($a^2=1_\calA$), the \textbf{idempotent elements} ($a^2=a$) and the \textbf{unipotent elements of index $2$} ($(a-1_\calA)^2=0_\calA$).

Let $p_1,\dots,p_n$ be polynomials with degree $2$ in $\F[t]$. A $(p_1,\dots,p_n)$-sum (respectively, a $(p_1,\dots,p_n)$-product) in $\calA$
is an element $x \in \calA$ that splits into $x=\sum_{k=1}^n a_k$
(respectively, $x=\prod_{k=1}^n a_k$) where $a_k \in \calA$ and $p_k(a_k)=0$ for all $k \in \lcro 1,n\rcro$.
Note that if $x$ is a $(p_1,\dots,p_n)$-sum (respectively, a $(p_1,\dots,p_n)$-product)
then every element of $\calA$ that is conjugated to $x$ is also a $(p_1,\dots,p_n)$-sum (respectively, a $(p_1,\dots,p_n)$-product).

In general, determining which elements of $\calA$ are $(p_1,\dots,p_n)$-sums or $(p_1,\dots,p_n)$-products is an intractable problem.
However, in matrix algebras, the solution is known in several interesting cases:
\begin{itemize}
\item For $p_1=\cdots=p_n=t^2-t$, the $(p_1,\dots,p_n)$-products (i.e.\ the products of $n$ idempotents) in $\Mat_N(\F)$ have been determined by Ballantine \cite{Ballantine};
\item For $p_1=\cdots=p_n=t^2-t$, the $(p_1,\dots,p_n)$-products (i.e.\ the products of $n$ involutions) in $\Mat_N(\F)$ have been determined by Gustafson, Halmos and Radjavi \cite{Gustafsonetal} for $n \geq 4$, and Djokovi\'c, Wonenburger, Hoffman and Paige
    \cite{Djokovic,HoffmanPaige,Wonenburger} for $n=2$;
\item For $p_1=\cdots=p_n=t^2$ and $n \neq 3$, the $(p_1,\dots,p_n)$-sums (i.e.\ the sums of $n$ square-zero matrices) in $\Mat_N(\F)$ have been determined by Wang, Wu and Botha \cite{WangWu,Bothasquarezero}.
\end{itemize}
We also point to a recent classification in $\Mat_N(\F)$ of all $(p_1,p_2)$-sums, and also to the one of all $(p_1,p_2)$-products \cite{dSPsumprod2}
when $p_1(0)p_2(0) \neq 0$.

In the second half of the twentieth century, there have been comparable studies on the algebra of bounded operators of a Hilbert space (see \cite{HalmosKakutani,PearcyTopping,Radjavi,WangWu}). However, it is only very recently that serious results have been discovered for the algebra $\End(V)$ of all endomorphisms of an \emph{infinite-dimensional} vector space $V$. Let us cite an important result that combines theorems from
\cite{dSPidempotentinfinite1,dSPidempotentinfinite2} (see also \cite{Breaz} and \cite{Shitov}) and
\cite{dSP3involutionsinfinite}:

\begin{theo}
Let $p_1,p_2,p_3,p_4$ be split polynomials of degree $2$ in $\F[t]$.
Let $V$ be an infinite-dimensional vector space. Then:
\begin{enumerate}[(a)]
\item Every element of $\End(V)$ is a $(p_1,p_2,p_3,p_4)$-sum.
\item If $\forall i \in \{1,2,3,4\}, \; p_i(0) \neq 0$, then every element of $\GL(V)$ is a $(p_1,p_2,p_3,p_4)$-product.
\end{enumerate}
\end{theo}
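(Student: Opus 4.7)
The plan is to reduce both parts to the cited infinite-dimensional decomposition theorems via canonical forms for quadratic endomorphisms. Writing each split polynomial as $p_i(t)=(t-\alpha_i)(t-\beta_i)$, any element $a_i\in\End(V)$ with $p_i(a_i)=0$ decomposes canonically as $a_i=\alpha_i\,\id_V+(\beta_i-\alpha_i)\,E_i$ with $E_i$ idempotent (in the distinct-roots case), or as $a_i=\alpha_i\,\id_V+N_i$ with $N_i$ square-zero (in the repeated-root case); conversely every endomorphism of either form is $p_i$-quadratic. This parametrization is the backbone of the reduction.

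For part (a), a $(p_1,\dots,p_4)$-sum equals $\bigl(\sum_i \alpha_i\bigr)\,\id_V+\sum_i r_i$, where each $r_i$ is either a prescribed nonzero scalar multiple of an idempotent or an arbitrary square-zero endomorphism. By freely swapping which root of each $p_i$ plays the role of $\alpha_i$, the scalar term can be shifted in a limited way, and the problem reduces to showing that every $v\in\End(V)$, after a suitable scalar translation, decomposes as $\sum_{i=1}^4 r_i$ of the prescribed types. This should follow from the cited theorems of \cite{dSPidempotentinfinite1,dSPidempotentinfinite2,Breaz,Shitov}, once the idempotents are rescaled to match the prescribed coefficients; the key point is that in infinite dimensions the trace-like obstructions that appear in $\Mat_N(\F)$ all vanish, so four quadratic summands are always enough.

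For part (b), when each $p_i(0)\neq 0$ every quadratic element is invertible; I would extract a nonzero scalar from each factor to write $a_i=\gamma_i\,s_i$ with $\gamma_i\in\{\alpha_i,\beta_i\}$ and $s_i-\id_V$ either a scalar multiple of an idempotent or square-zero, so that $u=\bigl(\prod_i\gamma_i\bigr)\prod_i s_i$. It then suffices to decompose $u\big/\bigl(\prod_i\gamma_i\bigr)$ as a product of four such unipotent-like factors; this should follow from the cited decomposition of elements of $\GL(V)$ into involutions from \cite{dSP3involutionsinfinite}, after conjugation to match the prescribed quadratic types. The main obstacle I expect is the multiplicative bookkeeping of the scalar prefactor: the achievable values $\prod_i\gamma_i$ form only a finite subset of $\F^\times$, so one must either absorb any residual scaling into the $s_i$'s themselves or invoke a conjugation-by-scalar trick to handle every $u\in\GL(V)$, which may force a careful case analysis when $\F$ is small or when the roots of the $p_i$'s are tightly constrained.
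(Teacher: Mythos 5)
This theorem is not proved in the paper: it is stated as background and cited as a direct combination of results from \cite{dSPidempotentinfinite1,dSPidempotentinfinite2} (with \cite{Breaz,Shitov}) for part (a) and \cite{dSP3involutionsinfinite} for part (b). So there is no internal proof to compare against, and the correct move here was simply to invoke those references, which already treat arbitrary prescribed split degree-$2$ polynomials.

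Your reduction, beyond being sketchy, does not actually make progress. For (a), writing each $p_i$-quadratic summand as $\alpha_i\id_V+(\beta_i-\alpha_i)E_i$ with $E_i$ idempotent (or $\alpha_i\id_V+N_i$ with $N_i^2=0$) turns the problem into: decompose $v-\bigl(\sum_i\alpha_i\bigr)\id_V$ as $\sum_i\lambda_iE_i$ for prescribed nonzero scalars $\lambda_i=\beta_i-\alpha_i$ and idempotents $E_i$. But a scalar multiple $\lambda E$ of an idempotent is exactly an element annihilated by $t(t-\lambda)$, so this is again the $(q_1,\dots,q_4)$-sum problem for the split quadratics $q_i=t(t-\lambda_i)$; you have replaced one family of split quadratics by another of the same generality. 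Dividing by a common scalar to land on ``sum of four idempotents'' only works when all the $\lambda_i$ coincide, which you cannot arrange. The same circularity appears in (b): extracting the scalar $\gamma_i$ from $a_i$ leaves you with the $(q_1,\dots,q_4)$-product problem for $q_i=(t-1)(t-\beta_i/\gamma_i)$, and you explicitly concede that the multiplicative bookkeeping of $\prod_i\gamma_i$ is unresolved, since only finitely many such products are achievable while $u$ ranges over all of $\GL(V)$. In short, the idea of normal-forming quadratic elements is sound as notation, but it does not reduce either statement to anything previously known; a genuine proof would have to reproduce the block-operator machinery of the cited papers (exploiting $V\cong V\oplus V$) directly for general split quadratics, which your sketch does not attempt.
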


It can be shown that this result still holds if one allows at most two of the polynomials $p_1,\dots,p_4$ to be
irreducible over $\F$, but we do not know if the result still holds with no restriction of irreducibility.

Better still, now we have a classification of the $(p_1,p_2,p_3)$-sums (and of the $(p_1,p_2,p_3)$-products if $p_1(0)\,p_2(0)\,p_3(0) \neq 0$)
when $p_1,p_2,p_3$ are all split, among the endomorphisms that are not finite-rank perturbations of a scalar multiple of the identity
(i.e.\ that are not of the form $\lambda \id_V+w$ with $\lambda \in \F$ and $w \in \End(V)$ of finite rank).
For the remaining endomorphisms of $V$, the classification is probably hopeless because it is deeply connected to corresponding open problems
in the finite-dimensional case. However, in very specific instances the classification is complete:
sums of three square-zero elements of $\End(V)$ are now well understood \cite{dSPidempotentinfinite1},
as well as products of three involutions \cite{dSP3involutionsinfinite}.

In the infinite-dimensional setting, this essentially leaves us with the problem of
decomposing an endomorphism into two quadratic summands or factors.

The aim of the present article is to obtain results in the most simple cases: the products of two involutions (as well as the twin case of products of two
unipotent elements of index $2$), and the sums of two square-zero elements.
In the future, we plan to tackle more complicated situations, e.g.\ sums of two idempotents.

\vskip 3mm
It is important first to recall the known results in the finite-dimensional case.
To this end, we need a useful concept:

\begin{Def}
Let $u \in \End(V)$. We say that $u$ has the \textbf{exchange property}
when there is a splitting $V=V_1 \oplus V_2$ in which $u(V_1) \subset V_2$ and $u(V_2) \subset V_1$.
\end{Def}

\begin{Rem}\label{exchangedirectsumRem}
The exchange property is preserved in taking direct sums of endomorphisms. More precisely, let $u \in \End(V)$,
and let $V=\underset{i \in I}{\bigoplus} V_i$ be a splitting in which each $V_i$ is stable under $u$ and the resulting endomorphism
has the exchange property. Then, for all $i \in I$, we choose a splitting $V_i=F_i \oplus G_i$ such that $u(F_i) \subset G_i$ and
$u(G_i) \subset F_i$, and we set $F:=\underset{i \in I}{\bigoplus} F_i$ and $G:=\underset{i \in I}{\bigoplus} G_i$, so that
$V=F \oplus G$, $u(F) \subset G$ and $u(G) \subset F$.
\end{Rem}

\begin{Rem}\label{exchangesimilarRem}
The exchange property depends on $u$ up to similarity. Remember that an endomorphism $v$ of a vector space $W$
is similar to $u$ if and only if there exists an isomorphism $\varphi : V \overset{\simeq}{\rightarrow} W$ such that
$v=\varphi \circ u \circ \varphi^{-1}$. If $u$ has the exchange property, then we split $V=E \oplus F$ where $u(E) \subset F$
and $u(F) \subset E$; then, by setting $E':=\varphi(E)$ and $F':=\varphi(F)$, we find that $W=E' \oplus F'$, $v(E') \subset F'$ and $v(F') \subset E'$,
and we conclude that $v$ has the exchange property.
\end{Rem}

We are now ready to state the characterization of sums of two square-zero endomorphisms in the finite-dimensional case:

\begin{theo}[Wang-Wu \cite{WangWu} and Botha \cite{Bothasquarezero}]\label{sum2finitetheo}
Let $V$ be a finite-dimensional vector space, and $u \in \End(V)$.
Then the following conditions are equivalent:
\begin{enumerate}[(i)]
\item $u$ is the sum of two square-zero elements of $\End(V)$.
\item The invariant factors of $u$ are all even or odd (as polynomials).
\item The endomorphism $u$ has the exchange property.
\end{enumerate}
Moreover, if $\car(\F) \neq 2$, these conditions are equivalent to:
\begin{enumerate}[(iv)]
\item $u$ is similar to $-u$.
\end{enumerate}
\end{theo}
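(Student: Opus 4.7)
The plan is to establish the cycle (iii) $\Rightarrow$ (i) $\Rightarrow$ (ii) $\Rightarrow$ (iii), together with (iii) $\Rightarrow$ (iv) $\Rightarrow$ (ii) in characteristic different from $2$ to close the extra equivalence.

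The short implications come first. For (iii) $\Rightarrow$ (i), given a splitting $V = V_1 \oplus V_2$ exchanged by $u$, I set $a$ to coincide with $u$ on $V_1$ and vanish on $V_2$, and $b$ symmetrically; both are square-zero since their image lies in their kernel, and $a + b = u$. For (iii) $\Rightarrow$ (iv) in characteristic $\neq 2$, the involution $\tau$ equal to $\id$ on $V_1$ and $-\id$ on $V_2$ satisfies $\tau u \tau^{-1} = -u$, as checked on each summand. For (iv) $\Rightarrow$ (ii), the monic invariant factors of $-u$ are $(-1)^{\deg p_i} p_i(-t)$, and equating them with the $p_i$'s is exactly the even/odd condition on each $p_i$.

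For (ii) $\Rightarrow$ (iii), I use the rational canonical form to write $V = \bigoplus_i V_i$ as a direct sum of cyclic $\F[u]$-submodules whose minimal polynomials are the invariant factors $p_i$ of $u$, each even or odd by hypothesis. Since the exchange property is stable under direct sums by Remark \ref{exchangedirectsumRem}, it suffices to handle each $V_i$ separately. In the cyclic case with generator $e$ and minimal polynomial $p$ of degree $n$, I split $V_i$ according to the parity of the exponents in the canonical basis $(u^j(e))_{0 \leq j < n}$. The map $u$ clearly toggles parity on interior basis vectors, and the relation $u^n(e) = -\sum_{j < n} c_j u^j(e)$ lands in the correct parity class because $c_j = 0$ whenever $j$ has parity opposite to $n$, which is precisely the even/odd assumption.

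The main obstacle is (i) $\Rightarrow$ (ii). The natural strategy is to work with the Kaplansky invariants of $u$, using the crucial algebraic observation that $u^2 = ab + ba$ forces both $a$ and $b$ to commute with $u^2$: they are then square-zero endomorphisms of $V$ viewed as an $\F[u^2]$-module, with $a + b = u$. Over an algebraic closure of $\F$, this structure should let one pair the generalized eigenspaces of $u$ for $\lambda$ and $-\lambda$ via $a$ or $b$, yielding equality of the numbers of Jordan blocks of each size at $\lambda$ and $-\lambda$; this symmetry translates back to the even/odd condition on the invariant factors. In characteristic $2$ the symmetry $\lambda \leftrightarrow -\lambda$ is trivial, and the even/odd condition becomes a strictly stronger statement about each $p_i$ lying in $\F[t^2]$ or $t \F[t^2]$; the hard technical point will be to extract this refined condition from the $\F[u^2]$-module structure, presumably by induction on $\dim V$ using the square-zero $\F[u^2]$-linear operators $a$ and $b$, and this is where I expect the bulk of the work.
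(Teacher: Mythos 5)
The paper does not prove Theorem \ref{sum2finitetheo}: it is quoted from Wang, Wu and Botha as background, so there is no in-paper proof to compare against and I evaluate your attempt on its own terms. Your short implications are all correct: (iii)$\Rightarrow$(i) and (iii)$\Rightarrow$(iv) are the observations made at the end of Section \ref{problemSection}; (iv)$\Rightarrow$(ii) is the standard computation with the invariant factors of $-u$; and (ii)$\Rightarrow$(iii), via the parity splitting of a cyclic rational-canonical-form block, is the finite-dimensional ancestor of Lemma \ref{evenexchangelemma}.

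The genuine gap is (i)$\Rightarrow$(ii), which you flag yourself. In characteristic $\neq 2$ your sketch can be completed once you name the lemma that actually carries it: if $u=a+b$ is invertible with $a^2=b^2=0$ then $V=\Ker a\oplus\Ker b$ with $\im a=\Ker a$, $\im b=\Ker b$, and $u$ exchanges the two kernels --- this is Lemma \ref{sum2invlemma} of the paper, valid in any dimension. Combined with the commutation of $a,b$ with $u^2$, which makes them preserve the Fitting decomposition $V=\Ker u^m\oplus\im u^m$ for $m$ large, this reduces (i)$\Rightarrow$(iii) to the nilpotent and invertible cases, and then (iii)$\Rightarrow$(iv)$\Rightarrow$(ii) finishes. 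In characteristic $2$, however, the ``pair $\lambda$ with $-\lambda$'' heuristic gives nothing (every $\lambda$ equals $-\lambda$), and the condition that each $p_i$ lie in $\F[t^2]$ or in $t\,\F[t^2]$ is strictly stronger than anything a similarity argument can produce; your proposal contains no actual argument here, only the remark that one ``presumably'' proceeds by induction. The mechanism that does work is essentially what the paper develops for the infinite-dimensional analogue (Lemma \ref{Klemma2} and its use in the proof of Theorem \ref{sum2car2theo}): once $V=\Ker a\oplus\Ker b$ one identifies $u$ with the block operator $K(c)$, where $c$ is the composite $\Ker a\to\Ker b\to\Ker a$ induced by $u$, and deduces the even/odd structure of the invariant factors of $K(c)$ from those of $c$. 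Without something of this sort the characteristic-$2$ case of (i)$\Rightarrow$(ii) is missing, and since your whole cycle (iii)$\Rightarrow$(i)$\Rightarrow$(ii)$\Rightarrow$(iii) runs through this implication, the argument does not close.
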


The next result requires that we define the (rectified) reciprocal polynomial of a monic polynomial $p$:

\begin{Not}
Let $p \in \F[t]$ be a monic polynomial such that $p(0) \neq 0$. Denote by $d$ the degree of $p$. Then we set
$$p^\sharp:=p(0)^{-1}\,t^d\,p(t^{-1}),$$
which we call the \textbf{reciprocal polynomial} of $p$.
\end{Not}

\begin{theo}[Djokovi\'c \cite{Djokovic}, Hoffman and Paige \cite{HoffmanPaige} and Wonenburger \cite{Wonenburger}]\label{prod2finitetheo}
Let $V$ be a finite-dimensional vector space, and $u \in \GL(V)$.
The following conditions are then equivalent:
\begin{enumerate}[(i)]
\item $u$ is the product of two involutions.
\item $u$ is similar to $u^{-1}$.
\item The invariant factors of $u$ are all \textbf{quasi-palindromials}, i.e.\ each one of them $p$ satisfies
$p^\sharp=p$.
\end{enumerate}
\end{theo}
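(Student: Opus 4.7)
The plan is to prove the cycle (i) $\Rightarrow$ (ii) $\Leftrightarrow$ (iii) $\Rightarrow$ (i). The implication (i) $\Rightarrow$ (ii) is immediate: if $u = ab$ with $a^2 = b^2 = \id_V$, then $aua^{-1} = a(ab)a = ba = (ab)^{-1} = u^{-1}$, so the involution $a$ itself conjugates $u$ to $u^{-1}$.

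For the equivalence (ii) $\Leftrightarrow$ (iii), I would invoke the classification of endomorphisms of a finite-dimensional space up to similarity by their invariant factors $p_1 \mid p_2 \mid \cdots \mid p_r$. A direct inspection of companion matrices shows that the invariant factors of $u^{-1}$ are precisely $p_1^\sharp, \ldots, p_r^\sharp$, which are well-defined because invertibility of $u$ forces $p_i(0) \neq 0$. The uniqueness of invariant factors then yields $u \sim u^{-1}$ if and only if $p_i = p_i^\sharp$ for all $i$.

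For the main implication (iii) $\Rightarrow$ (i), decomposing $V$ according to the invariant factors and noting that a direct sum of products of two involutions is itself a product of two involutions reduces the problem to the case where $u$ is cyclic with a quasi-palindromial minimal polynomial $p$. Model $V$ as the quotient ring $\F[t]/(p)$ with $u$ acting as multiplication by the class of $t$; since $p(0) \neq 0$, this class is invertible in the ring. The identity $t^n\,p(t^{-1}) = p(0)\,p^\sharp(t)$, combined with $p = p^\sharp$, shows that $p(t^{-1})$ vanishes in $\F[t]/(p)$, so the assignment $f(t) \mapsto f(t^{-1})$ defines an involutive $\F$-algebra automorphism $\sigma$ of $\F[t]/(p)$. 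Viewed as an $\F$-linear endomorphism $s$ of $V$, this map satisfies $s^2 = \id_V$ and $su = u^{-1}s$; consequently $u = s \cdot (su)$, and $(su)^2 = u^{-1} s \cdot s u = u^{-1} s^2 u = \id_V$, exhibiting $u$ as a product of two involutions.

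The main obstacle in this plan is the construction of the conjugating involution $s$ in the cyclic case, which relies crucially on the quasi-palindromial condition to make the ring automorphism $t \mapsto t^{-1}$ well-defined; all other steps reduce to standard invariant factor theory.
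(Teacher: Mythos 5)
The paper does not prove Theorem~\ref{prod2finitetheo}: it states it as a classical result and refers to the original papers of Djokovi\'c, Hoffman--Paige and Wonenburger, so there is no in-paper argument to compare against. Your proof is correct and self-contained. The implication (i)~$\Rightarrow$~(ii) is the standard one-line computation. For (ii)~$\Leftrightarrow$~(iii), your claim that the invariant factors of $u^{-1}$ are $p_1^\sharp \mid \cdots \mid p_r^\sharp$ is right (each $p_i(0)\neq 0$ since $u$ is invertible, $p_i^\sharp$ is monic of degree $\deg p_i$ and annihilates the inverse of the companion operator of $p_i$, and the divisibility is preserved because $(pq)^\sharp = p^\sharp q^\sharp$), and uniqueness of invariant factors finishes the equivalence. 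For (iii)~$\Rightarrow$~(i), the reduction to the cyclic case via Remark~\ref{superdirectsumremark} is fine, and the key construction works as you describe: in $\F[t]/(p)$ the class of $t$ is a unit, the quasi-palindromiality $p=p^\sharp$ gives $p(\bar t^{-1})=\bar t^{-\deg p}\,p(0)\,p(\bar t)=0$, so $t\mapsto t^{-1}$ extends to a well-defined $\F$-algebra endomorphism $\sigma$ of $\F[t]/(p)$; since $\sigma$ squares to the identity it is an involutive automorphism, and its $\F$-linear realization $s$ satisfies $s^2=\id_V$ and $su = u^{-1}s$, whence $u = s\cdot(su)$ with $(su)^2 = u^{-1}s^2u = \id_V$. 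This conjugating-involution construction is also the structural idea underlying the cited classical proofs.
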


In the infinite-dimensional case, there is no invariant factor to speak of, so the only sensible conjectures seem to be the following ones:

\begin{conj}\label{conj2sum}
Let $V$ be an infinite-dimensional vector space, and $u \in \End(V)$.
Then, $u$ is the sum of two square-zero endomorphisms of $V$ if and only if
$u$ has the exchange property.

Moreover, if $\car(\F) \neq 2$ then this condition is equivalent to having $u$ similar to its opposite.
\end{conj}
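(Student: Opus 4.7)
The plan is to establish the three implications: ``exchange property $\Rightarrow$ sum of two square-zero'', its converse, and (in characteristic $\neq 2$) the equivalence with ``similar to its opposite''. Two of these directions are essentially elementary and I would dispose of them first. Given an exchange splitting $V = V_1 \oplus V_2$, one writes $u$ in block form as $\begin{pmatrix} 0 & B \\ A & 0 \end{pmatrix}$ with $A : V_1 \to V_2$ and $B : V_2 \to V_1$, and decomposes it as the sum of the two block-matrices $\begin{pmatrix} 0 & 0 \\ A & 0 \end{pmatrix}$ and $\begin{pmatrix} 0 & B \\ 0 & 0 \end{pmatrix}$, each of which is manifestly square-zero. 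For the equivalence with $u \sim -u$ in characteristic $\neq 2$, the implication from the exchange property is witnessed by the involution $J := \id_{V_1} \oplus (-\id_{V_2})$, which satisfies $JuJ^{-1} = -u$.

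The main effort goes into ``sum of two square-zero $\Rightarrow$ exchange property'' and, in characteristic $\neq 2$, ``$u \sim -u \Rightarrow$ exchange property''. I would first restrict to the locally finite countable-dimensional setting of the paper and approach both via a common classification by Kaplansky invariants: the isomorphism type of $(V,u)$ is then encoded by the multiplicities of the cyclic summands $\F[t]/(p^n)$ as $p$ ranges over monic irreducibles of $\F[t]$ and $n \geq 1$. The involution $p \mapsto \widetilde{p}$ with $\widetilde{p}(t) := (-1)^{\deg p}\,p(-t)$ acts on this index set and implements negation $u \leadsto -u$ at the level of isomorphism classes. The strategy is to show that each of the three properties translates into the same combinatorial condition on the Kaplansky invariants, namely that for every $p$ the multiplicity of $\F[t]/(p^n)$ equals that of $\F[t]/(\widetilde{p}^n)$, with no extra constraint when $\widetilde{p} = p$ (i.e.\ when $p$ is even or odd).

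The easier half of this program is the construction of exchange splittings: the cyclic module $\F[t]/(p^n)$ carries a canonical $\Z/2$-grading when $p$ is even or odd, a twinned pair $\F[t]/(p^n) \oplus \F[t]/(\widetilde{p}^n)$ carries a ``hyperbolic'' exchange splitting, and Remark~\ref{exchangedirectsumRem} assembles these into a global splitting on $V$; meanwhile $u \sim -u$ is automatically equivalent to the symmetry condition, since similarity preserves Kaplansky invariants. The main obstacle is the converse, namely deriving the symmetry condition from a decomposition $u = a + b$ with $a^2 = b^2 = 0$. Here I would try to adapt the finite-dimensional arguments of Wang-Wu and Botha to the infinite setting, possibly through the representation theory of the special biserial algebra $\F\langle x, y\rangle/(x^2, y^2)$, whose indecomposable modules are classified as string and band modules carrying natural $\Z/2$-gradings that control the multiplicities one needs to match. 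Extending the conjecture beyond the locally finite case to arbitrary $u$ would seem to require substantially different techniques, which is presumably why the statement remains only conjectural in full generality.
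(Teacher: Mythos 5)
Your two elementary directions match the paper: the block-matrix decomposition is exactly the paper's $a:=u\,p_1$, $b:=u\,p_2$ (where $p_1,p_2$ are the projections onto $V_1,V_2$), and your conjugating involution $J=\id_{V_1}\oplus(-\id_{V_2})$ is the paper's $p_1-p_2$. Both appear at the end of Section~\ref{problemSection}.

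The difficulty is in your treatment of the hard directions. Your starting premise --- that the isomorphism type of $(V,u)$ in the locally finite, countable-dimensional setting is ``encoded by the multiplicities of the cyclic summands $\F[t]/(p^n)$'' --- is false. Countably-generated torsion $\F[t]$-modules need not be direct sums of cyclic modules; Proposition~\ref{contrex} in the appendix gives an explicit locally nilpotent endomorphism of a countable-dimensional space that is not a direct sum of Jordan cells. The correct classification is by Kaplansky (Ulm) invariants $\kappa_\alpha(u,p)$ indexed over $\alpha\in\omega_1\cup\{\infty\}$, and these are strictly more information than multiplicities of cyclics. This undermines your ``common combinatorial condition'' framing. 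Furthermore, your statement that there is ``no extra constraint when $\widetilde{p}=p$'' is wrong in characteristic $2$, where $\widetilde{p}=p$ always and yet Theorem~\ref{sum2car2theo}(iii) imposes a nontrivial constraint ($\kappa_\alpha(u,p)=0$ for even $\alpha$ and non-even $p\neq t$).

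For ``sum of two square-zero $\Rightarrow$ exchange'', you propose to adapt the finite-dimensional arguments via string/band modules of $\F\langle x,y\rangle/(x^2,y^2)$; but you give no details, and the string/band classification for infinite-dimensional modules over this algebra is not something you can simply invoke. In fact the paper's argument for this implication (Section~\ref{squarezeroiImpliesiiSection}) needs neither Kaplansky invariants nor representation theory: Corollary~\ref{commutationCor1} shows $a,b$ commute with $u^2$, hence stabilize the $p$-primary components for $p\neq t$, reducing to the invertible case; and in the invertible case Lemma~\ref{sum2invlemma} gives directly that $V=\Ker a\oplus\Ker b$ with $u(\Ker a)\subset\Ker b$ and $u(\Ker b)\subset\Ker a$, which \emph{is} the exchange splitting. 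The locally nilpotent ($t$-primary) part is handled by the adapted basis theorem (Lemma~\ref{localnilpexchangelemma}). This is considerably more direct than your proposed route, and it suggests your plan, as stated, does not contain the key ideas needed to close the argument.

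For ``$u\sim -u \Rightarrow$ exchange'' (characteristic $\neq 2$), your idea of a $\Z/2$-grading on cyclic summands for self-conjugate $p$ and a hyperbolic splitting for twinned pairs is morally what the paper does, but it must be run through the adapted basis theorem (Theorem~\ref{adaptedbasisTheo}, used in Lemma~\ref{evenexchangelemma}) rather than a nonexistent decomposition into cyclics. You should also note that the paper validates the conjecture only in the locally finite, countable-dimensional case (Theorem~\ref{2squarezeroinfinitetheo}); the general conjecture, as you correctly observe, remains open.
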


\begin{conj}\label{conj2prod}
Let $V$ be an infinite-dimensional vector space, and $u \in \GL(V)$.
Then $u$ is the product of two involutions in $\GL(V)$ if and only if $u$ is similar to its inverse.
\end{conj}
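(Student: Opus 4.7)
The only-if direction is immediate: if $u=s_1 s_2$ with involutions $s_1,s_2$, then $s_1 u s_1^{-1}=s_2 s_1=u^{-1}$, so $u$ is similar to $u^{-1}$. For the converse, I would start from the reformulation that $u$ is a product of two involutions if and only if there exists an involution $s\in\GL(V)$ with $sus=u^{-1}$: given such $s$, set $s_1:=s$ and $s_2:=su$, and check that $s_2^2=susu=u^{-1}u=\id_V$ while $s_1 s_2=u$. So, starting from a $\varphi\in\GL(V)$ with $\varphi u\varphi^{-1}=u^{-1}$ (supplied by hypothesis), the task is to locate an involution inside the coset $\varphi\cdot C(u)$, where $C(u)$ denotes the centralizer of $u$ in $\GL(V)$.

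My plan would be to reduce to the countable-dimensional case via a transfinite decomposition. Concretely, one aims to split $V=\bigoplus_{\alpha\in A}V_\alpha$ with each $V_\alpha$ of countable $\F$-dimension and stable under both $u$ and $\varphi$; this is a direct-sum decomposition of $V$ as a module over the ring $R:=\F\langle u^{\pm 1},\varphi^{\pm 1}\rangle$ subject to the crossed-product-type relation $\varphi u=u^{-1}\varphi$. Each such $V_\alpha$ is then viewed as an $\F[t,t^{-1}]$-module (with $t$ acting as $u$), and Kaplansky's theorem for countably generated modules over a PID provides a splitting $V_\alpha=T_\alpha\oplus F_\alpha$, torsion plus free. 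The torsion part $T_\alpha$ is canonical, hence $\varphi$-invariant, so $u|_{T_\alpha}$ is similar to its inverse and is locally finite; the paper's main theorem for locally finite operators then produces two involutions of $T_\alpha$ whose product is $u|_{T_\alpha}$. The free part $F_\alpha$ is a countable direct sum of copies of $\F[t,t^{-1}]$, and on each such summand one writes down involutions explicitly: with respect to the basis $(t^k)_{k\in\Z}$, the maps $t^k\mapsto t^{1-k}$ and $t^k\mapsto t^{-k}$ are involutions whose composition is multiplication by $t$.

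The main obstacle is the transfinite decomposition step. The ring $R$ is non-commutative and not a PID, so Kaplansky's decomposition theorem does not apply directly, and the naive strategy of iteratively absorbing countable subsets under $u^{\pm 1}$ and $\varphi^{\pm 1}$ yields only an ascending chain of countable-dimensional $R$-submodules exhausting $V$, not a direct sum decomposition of it. One possible remedy is to first modify $\varphi$ within its coset $\varphi\cdot C(u)$, exploiting the relation $\varphi^2\in C(u)$, in order to arrange for $\varphi^2$ to be trivial or central; this would turn $R$ into a crossed product with a finite grading group and bring the decomposition problem within reach of Kaplansky-type theorems. Alternatively, one might attempt a direct construction tailored to the intertwining structure of the triple $(V,u,\varphi)$, in the spirit of the Kaplansky-invariant refinements developed elsewhere in this paper.
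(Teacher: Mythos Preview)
The statement you are attempting to prove is labelled a \emph{conjecture} in the paper and is not proved there in full generality. What the paper actually establishes is the restricted case where $V$ has countable dimension and $u$ is locally finite (Theorem~\ref{2involinfinitetheo}); that proof proceeds via the Kaplansky invariants, reducing to the $p$-primary parts of $u$ and treating each with the block-operator lemmas of Section~\ref{2involutionSection} together with Theorem~\ref{unipotentprod2theo} for the parts with $p=t\pm 1$. There is thus no proof in the paper to compare yours against.

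Your only-if direction is correct and standard. For the converse, your outline is a reasonable programme but not a proof, and you correctly identify where it breaks. Two remarks on the gap:
\begin{itemize}
\item The iterated-closure argument you mention yields a continuous ascending chain of countable-dimensional $R$-submodules exhausting $V$, not a direct-sum decomposition; passing from a chain to a direct sum would require each term to be complemented in the next as an $R$-module (or at least as an $\F[t,t^{-1}]$-module), and nothing in your setup provides this.
\item Your proposed remedy of modifying $\varphi$ within $\varphi\cdot C(u)$ so that $\varphi^2=\id_V$ is circular: an involution in that coset is precisely an involution conjugating $u$ to $u^{-1}$, which is exactly the reformulation you gave at the outset. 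Asking merely for $\varphi^2$ to be central is weaker, but it is not clear how to achieve it either, nor how it would help with complementation.
\end{itemize}

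On the positive side, your treatment of each countable piece, were one obtained, is sound. The torsion submodule $T_\alpha$ is canonical and hence $\varphi$-stable, so Theorem~\ref{2involinfinitetheo} applies to $u|_{T_\alpha}$; and your explicit involutions $t^k\mapsto t^{1-k}$, $t^k\mapsto t^{-k}$ on a free $\F[t,t^{-1}]$-summand handle the non-torsion part, which lies entirely outside the paper's locally-finite scope. That is a genuine addition, but without the decomposition step the argument remains incomplete, and the paper offers nothing further because the full conjecture is left open there.
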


Note that in any group the product of two involutions is conjugated to its inverse, but the converse is not true in general.
Moreover, if $u \in \End(V)$ has the exchange property then it is the sum of two square-zero endomorphisms
and it is similar to its opposite. Assume indeed that $V=V_1 \oplus V_2$ and $u(V_1) \subset V_2$
and $u(V_2) \subset V_1$. Denote by $p_1$ (respectively, by $p_2$) the projection of $V$ on $V_1$ along $V_2$
(respectively, on $V_2$ along $V_1$) then $a:= u \, p_1$ and $b:=u \, p_2$ have square zero and
$u=a+b$. Moreover, one checks that $-u=(p_1-p_2) u (p_1-p_2)$ and $(p_1-p_2)^2=\id_V$.
Hence, one of the main difficult parts in Conjecture \ref{conj2sum} consists in proving that the sum of two square-zero endomorphisms has the exchange property.

\subsection{Locally finite endomorphisms}

In this article, we will validate the above conjectures under rather stringent assumptions on $V$ and $u$:
we will assume that $V$ has countable dimension and that $u$ is locally finite:

\begin{Def}
Let $u$ be an endomorphism of a vector space $V$ over $\F$.
This endows $V$ with a structure of $\F[t]$-module (so that $t.x=u(x)$ for all $x \in V$), and we denote by $V^u$ the resulting module.
We say that $u$ is \textbf{locally finite} whenever $V^u$ is a torsion module or,
in other words, $(u^k(x))_{k \in \N}$ is linearly dependent over $\F$ for all $x \in V$.
We say that $u$ is \textbf{locally nilpotent} whenever $\forall x \in V, \; \exists n \in \N : \; u^n(x)=0$.
\end{Def}

At this point, the reader might wonder why we choose to restrict the study to locally finite endomorphisms of countable-dimensional spaces.
The reason is simple: in the finite-dimensional case, all the known proofs of Theorems \ref{sum2finitetheo} and \ref{prod2finitetheo}
involve a deep understanding of the conjugacy classes in the algebra $\End(V)$. For example, they require at least the Jordan normal form of a nilpotent endomorphism. In contrast, in the infinite-dimensional case no clear understanding of the conjugacy classes is known beyond a very limited scope.
The ``locally finite+countable dimension" case is essentially the only one where there exists a sensible way of classifying the endomorphisms up to similarity.

Now we can state our main results:

\begin{theo}\label{2squarezeroinfinitetheo}
Let $V$ be a vector space with countable dimension, and $u$ be a locally finite endomorphism of it.
The following conditions are equivalent:
\begin{enumerate}[(i)]
\item $u$ is the sum of two square-zero endomorphisms of $V$.
\item $u$ has the exchange property.
\end{enumerate}
If in addition $\car(\F) \neq 2$ then these conditions are equivalent to:
\begin{itemize}
\item[(iii)] $u$ is similar to $-u$.
\end{itemize}
\end{theo}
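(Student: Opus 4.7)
Since the implications $(ii)\Rightarrow(i)$ and, when $\car(\F)\neq 2$, $(ii)\Rightarrow(iii)$ are already recorded in the paragraph preceding the theorem, all the content lies in the converses: $(i)\Rightarrow(ii)$ in every characteristic, and $(iii)\Rightarrow(ii)$ when $\car(\F)\neq 2$. The common target is the exchange property, and my plan is to manufacture an exchange splitting from each hypothesis by exploiting the module-theoretic classification of the countable torsion $\F[t]$-module $V^u$ via its Kaplansky invariants, as announced in the abstract.

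The first step is the canonical primary decomposition $V=\bigoplus_p V_p$ (over the monic irreducibles $p\in\F[t]$), which is available because $u$ is locally finite. Writing $p^-$ for the monic normalisation of $p(-t)$, I would sort the components into two families: the \emph{symmetric} ones, where $p=p^-$ (automatic in characteristic $2$; otherwise including $p=t$ and every polynomial of the form $t^2-c$ with $c$ non-square in $\F$), and the \emph{asymmetric pairs} $V_p\oplus V_{p^-}$ with $p\neq p^-$. In view of Remark~\ref{exchangedirectsumRem}, it suffices to produce an exchange splitting for $u$ on each such family separately. For an asymmetric pair, any $\F$-linear isomorphism $\varphi:V_p\overset{\sim}{\rightarrow} V_{p^-}$ satisfying $\varphi\,u=-u\,\varphi$ yields the exchange splitting $W_1=\{x+\varphi(x)\mid x\in V_p\}$ and $W_2=\{x-\varphi(x)\mid x\in V_p\}$ of $V_p\oplus V_{p^-}$; such a $\varphi$ exists precisely when $V_p$ and $V_{p^-}$ share the same Kaplansky invariants. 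For each symmetric primary component $V_p$, I plan to build the splitting block by block on its Kaplansky decomposition: every finite cyclic block $\F[t]/(p^n)$ is handled via Theorem~\ref{sum2finitetheo}, and every Pr\"ufer-type block $\F[t]/(p^\infty)$ is split by alternating the natural countable basis $(e_k)_{k\geq 1}$ with $p(u)\,e_{k+1}=e_k$.

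The central difficulty is to show that the hypothesis $(i)$ or $(iii)$ genuinely forces the symmetry of Kaplansky invariants needed by the above constructions. For $(iii)$ the argument is straightforward: any similarity witnessing $u\sim -u$ sends $V_p$ onto $V_{p^-}$ as $\F$-spaces while interchanging the $\F[t]$-module structures associated with $u$ and $-u$, thereby equating the Kaplansky invariants of both components. The genuinely hard step is $(i)\Rightarrow(ii)$, which I expect to be the main obstacle of the proof. When $\car(\F)\neq 2$, a tempting shortcut is to establish $(i)\Rightarrow(iii)$ first --- either by exhibiting an explicit involutory similarity built from $a$ and $b$, or by applying Theorem~\ref{sum2finitetheo} along an exhaustive chain of $u$-invariant finite-dimensional subspaces of $V$ (whose existence comes from local finiteness and countable dimension) and transferring the invariant-factor symmetry to the limit. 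The characteristic-$2$ case demands a direct approach, since $(iii)$ is then vacuous; here I foresee a Kaplansky-style back-and-forth construction of the exchange splitting that sweeps a countable basis of $V$ while simultaneously tracking the action of both $a$ and $b$.
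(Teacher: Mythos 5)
Your plan for $(iii)\Rightarrow(ii)$ follows the paper's broad structure (primary decomposition, asymmetric-pair trick, assembly via Remark~\ref{exchangedirectsumRem}), but the treatment of the symmetric primary components rests on a false decomposition: you assume each $p$-primary part splits into finite cyclic blocks $\F[t]/(p^n)$ and Pr\"ufer blocks. Countably generated $p$-primary torsion $\F[t]$-modules are \emph{not} in general direct sums of cyclic and divisible modules --- indeed the paper's own appendix (Proposition~\ref{contrex}) exhibits a locally nilpotent endomorphism of a countable-dimensional space that is not a direct sum of Jordan cells, and this is precisely why the Ulm--Kaplansky theory is needed at all. What the classification actually provides is the weaker adapted basis of Theorem~\ref{adaptedbasisTheo}, from which the paper extracts the exchange splitting by sorting basis vectors according to a parity criterion (Lemmas~\ref{localnilpexchangelemma} and~\ref{evenexchangelemma}); no direct-sum decomposition of the module into cyclic and Pr\"ufer pieces is invoked or available.

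More fundamentally, the direction $(i)\Rightarrow(ii)$ --- the genuinely new content of the theorem --- is left unproved: your ``tempting shortcut'' via $(iii)$ in characteristic $\neq 2$ is not substantiated, and in characteristic $2$ you offer only an undeveloped sketch. The paper's actual argument here is short, does not invoke Kaplansky invariants at all, and works uniformly in every characteristic. Writing $u=a+b$ with $a^2=b^2=0$, Corollary~\ref{commutationCor1} shows that $u^2$ commutes with both $a$ and $b$; since for every $p\in\Irr(\F)\setminus\{t\}$ the polynomial $p\cdot H_{-1}(p)$ is even, this commutation forces $a$ and $b$ to stabilize $V':=\underset{p\neq t}{\bigoplus}\Tor_{p^\infty}(u)$, on which $u$ restricts to an automorphism. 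Lemma~\ref{sum2invlemma} then yields $V'=\Ker(a|_{V'})\oplus\Ker(b|_{V'})$ with $u$ exchanging the two kernels, and the $t$-primary part is handled by the locally nilpotent case (Lemma~\ref{localnilpexchangelemma}). This is the key step your proposal is missing.
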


\begin{theo}\label{2involinfinitetheo}
Let $V$ be a vector space with countable dimension, and $u$ be a locally finite automorphism of it.
Then $u$ is the product of two involutions in $\GL(V)$ if and only if $u$ is similar to its inverse.
\end{theo}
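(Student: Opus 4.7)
The easy direction is standard: if $u = ab$ with $a^2 = b^2 = \id_V$, then $aua^{-1} = aba = ba = u^{-1}$, so $u$ is similar to $u^{-1}$ via the involution $a$.

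For the converse, assume $u \sim u^{-1}$. Since $u$ is locally finite and invertible, $V^u$ is a torsion $\F[t]$-module whose elements have annihilators coprime to $t$, yielding the primary decomposition $V = \bigoplus_p V_p$ over monic irreducibles $p \in \F[t]$ with $p(0) \neq 0$. The identity $\ker p(u^{-1})^k = \ker p^\sharp(u)^k$ (valid because $u$ is invertible) shows that the conjugating isomorphism provided by $u \sim u^{-1}$ maps $V_p$ onto $V_{p^\sharp}$ for every $p$. Grouping the primary components into pairs $\{V_p, V_{p^\sharp}\}$ when $p \neq p^\sharp$ and into singletons when $p = p^\sharp$, the problem reduces to factorizing $u$ as a product of two involutions on each such $u$-stable piece; since direct sums of involutions are involutions, a global factorization is then obtained by assembly.

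For the mixed case $p \neq p^\sharp$: the conjugating map restricts to an $\F$-linear bijection $\varphi\colon V_p \to V_{p^\sharp}$ with $\varphi \circ u|_{V_p} = u^{-1}|_{V_{p^\sharp}} \circ \varphi$. Identifying $V_{p^\sharp}$ with $V_p$ via $\varphi^{-1}$, the restriction of $u$ to $V_p \oplus V_{p^\sharp}$ becomes block-diagonal with blocks $u_0$ and $u_0^{-1}$, where $u_0 := u|_{V_p}$, and the matrix identity
\[
\begin{pmatrix} u_0 & 0 \\ 0 & u_0^{-1} \end{pmatrix} = \begin{pmatrix} 0 & u_0 \\ u_0^{-1} & 0 \end{pmatrix} \begin{pmatrix} 0 & \id \\ \id & 0 \end{pmatrix}
\]
visibly exhibits $u$ as a product of two involutions on this pair.

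The principal obstacle is the palindromial case $p = p^\sharp$, where the factorization must be produced directly on $V_p$. My plan is to use a classification of countably generated $p$-primary torsion $\F[t]$-modules in the spirit of Kaplansky to reduce $V_p$ to a canonical form: a direct sum of cyclic pieces $\F[t]/(p^k)$ and, where unavoidable, Pr\"ufer-type divisible summands. On each cyclic $\F[t]/(p^k)$, the invariant factor $p^k$ is quasi-palindromial (since $p = p^\sharp$ yields $(p^k)^\sharp = p^k$), so the finite-dimensional Theorem \ref{prod2finitetheo} provides an explicit decomposition of $u$ as a product of two involutions. The most delicate step is handling the Pr\"ufer-type summands, which have no finite-dimensional analogue; here one must construct the two involutions by hand, for instance by exploiting the ascending filtration of such a module by its $p$-power kernels and matching pairs of $u$-stable ascending flags in a "hyperbolic" fashion. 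Taking direct sums of all these local factorizations then completes the proof.
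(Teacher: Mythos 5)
Your easy direction and your handling of the mixed case $p \neq p^\sharp$ are both correct, and they coincide with the paper's approach (the paper's Lemma~\ref{inversediagonalLemma} is the same block-swap factorization you exhibit, and the identification $u_{p^\sharp} \sim u_p^{-1}$ via a conjugating isomorphism is exactly how the paper handles that pair). The gap is in the quasi-palindromial case $p = p^\sharp$, which you yourself flag as the delicate step. You propose to reduce $V_p$ to ``a direct sum of cyclic pieces $\F[t]/(p^k)$ and, where unavoidable, Pr\"ufer-type divisible summands.'' This structural reduction is \emph{false} for countably generated $p$-primary torsion modules: there exist reduced $p$-primary modules that are not direct sums of cyclic modules, and in general one needs the full transfinite Ulm--Kaplansky invariants $\kappa_\alpha(u,p)$ (indexed by countable ordinals, not just by $k \in \N$ and ``$\infty$'') to classify them. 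This is precisely the content of Theorem~\ref{KaplanskyTheorem} and Theorem~\ref{admissiblesequenceTheo}, and the paper's Appendix (Proposition~\ref{contrex}) gives an explicit locally nilpotent endomorphism of a countable-dimensional space that is \emph{not} a direct sum of Jordan cells, i.e.\ whose module is not a sum of cyclics plus divisibles. So your plan, which treats cyclic summands via Theorem~\ref{prod2finitetheo} and handles Pr\"ufer summands ``by hand,'' misses an entire class of modules and cannot close the argument.

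The paper's actual route for $p = p^\sharp$ sidesteps explicit module decomposition entirely. For $p \in \{t-1, t+1\}$ it invokes Theorem~\ref{unipotentprod2theo}, proved via an adapted basis for the locally nilpotent part and a perturbation lemma (Lemma~\ref{locallynilpotentperturbationLemma}) that matches Kaplansky invariants to deduce similarity. For other quasi-palindromial $p$, it writes $p = R_1(q)$ with $q$ irreducible, uses Theorem~\ref{admissiblesequenceTheo} to produce a $q$-primary $w$ whose Kaplansky invariants feed into the block operator $L(w)$; Lemma~\ref{Llemma2} shows $L(w)$ is $p$-primary with the same invariants as $u_p$, so $L(w) \sim u_p$ by Kaplansky's theorem, and Lemma~\ref{Llemma1} exhibits $L(w)$ directly as a product of two involutions. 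In short: rather than decomposing $u_p$, the paper \emph{constructs} a model with matching invariants that is visibly a product of two involutions, and transports the factorization back through similarity. To fix your proof, you would need to replace the ``cyclic $+$ divisible'' reduction with this invariant-matching strategy (or an equivalent one) that can cope with arbitrary admissible transfinite Kaplansky invariant sequences.
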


We will also characterize the above properties in terms of the so-called \textbf{Kaplansky invariants} of $u$ (see Section
\ref{Kaplanskysection} for a review of those invariants), which will lead to the following interesting special cases:

\begin{theo}\label{nilpotentsum2theo}
Let $V$ be a vector space with countable dimension.
Then every locally nilpotent endomorphism of $V$ is the sum of two square-zero endomorphisms of $V$.
\end{theo}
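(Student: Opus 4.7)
The plan is to apply Theorem \ref{2squarezeroinfinitetheo}: it suffices to prove that every locally nilpotent endomorphism $u$ of a countable-dimensional space $V$ has the exchange property. Writing $K_n := \Ker(u^n)$, local nilpotence yields $V = \bigcup_{n \geq 0} K_n$. My goal is to produce an $\N$-graded basis $\mathcal{B} = \bigsqcup_{n \geq 1} \mathcal{B}_n$ of $V$ whose classes $\{\bar e : e \in \mathcal{B}_n\}$ form a basis of $K_n/K_{n-1}$ and satisfy $u(\mathcal{B}_n) \subset \Span(\mathcal{B}_{n-1})$ for every $n$. Once this is done, setting $V_{\mathrm{odd}} := \Span\bigl(\bigsqcup_{n \text{ odd}} \mathcal{B}_n\bigr)$ and $V_{\mathrm{even}} := \Span\bigl(\bigsqcup_{n \text{ even}} \mathcal{B}_n\bigr)$ yields $V = V_{\mathrm{odd}} \oplus V_{\mathrm{even}}$ with $u$ interchanging the two summands, which is exactly the exchange property.

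To construct this graded basis I would rely on the classification of locally nilpotent endomorphisms of countable-dimensional spaces through their Kaplansky invariants, as developed in Section~\ref{Kaplanskysection}. The expected indecomposable pieces are the cyclic modules $\F[t]/(t^n)$ for $n \in \N^*$ and the Pr\"ufer-type module $\F[t,t^{-1}]/\F[t]$. Each admits an explicit graded basis: for $\F[t]/(t^n)$ with cyclic vector $\bar 1$, take $u^{n-k}(\bar 1)$ at level $k$ for $1 \leq k \leq n$; for the Pr\"ufer module, take the natural staircase basis $(e_k)_{k \geq 1}$ with $u(e_k)=e_{k-1}$ and $u(e_1)=0$, graded by $k$. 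By Remark~\ref{exchangedirectsumRem} the exchange property is preserved under direct sums, so assembling the pieces gives one on $V$, and Theorem~\ref{2squarezeroinfinitetheo} closes the argument.

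The main obstacle is justifying the global decomposition of $V$ into such indecomposable summands: unlike the finite-dimensional case, a countable-dimensional $t$-primary $\F[t]$-module may contain elements of infinite $u$-height, and a naive inductive lift of a basis of $K_n/K_{n-1}$ need not satisfy $u(\mathcal{B}_n) \subset \Span(\mathcal{B}_{n-1})$. Correcting a lift $x$ so that $u(x) \in \Span(\mathcal{B}_{n-1})$ amounts to solving $u(y) = -k$ with $y \in K_{n-1}$ for some $k \in K_{n-2}$, which may fail when $u(K_{n-1}) \subsetneq K_{n-2}$. The Kaplansky invariants are designed precisely to overcome this, by providing a sufficiently fine similarity classification for the module structures that actually arise, and reorganising $V$ into building blocks on which the graded basis becomes immediate.
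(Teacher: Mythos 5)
Your first paragraph correctly identifies the right strategy, which is also the paper's: produce a basis graded by the height $\nu$, with $u$ sending each level into the span of the previous one, and then split by parity to get the exchange property. But the way you propose to produce this graded basis is where the argument breaks. You claim that the Kaplansky invariants will allow you to reorganise $V$ as a direct sum of finite Jordan cells $\F[t]/(t^n)$ and Pr\"ufer-type pieces, and then take the obvious graded basis on each summand. This decomposition into indecomposables is simply \emph{false} for locally nilpotent endomorphisms of countable-dimensional spaces, even though it is true in finite dimension and for countable torsion abelian $p$-groups with no elements of infinite height. The paper's appendix (Proposition~\ref{contrex}) exhibits an explicit locally nilpotent $u$ on a countable-dimensional space that is \emph{not} a direct sum of Jordan cells: it has a one-dimensional core $\Co(u)$ with $u(\Co(u))=\{0\}$, which is incompatible with any Jordan-cell decomposition by Proposition~\ref{coeur}. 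Knowing the Kaplansky invariants of $u$ gives you a similarity classification, not a cyclic decomposition; the invariants are precisely the obstruction data that survive when no cyclic decomposition exists. So the last paragraph of your proposal, which asserts that the invariants ``reorganise $V$ into building blocks on which the graded basis becomes immediate,'' is the gap.

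What actually salvages the argument, and what the paper uses, is the weaker Adapted Basis Theorem (Theorem~\ref{nilpotentadaptedbasistheo}, due to Zippin): for $u$ locally nilpotent on a countable-dimensional $V$, there is a basis $\bfB$ with $u(\bfB) \subset \bfB \cup \{0\}$. This does \emph{not} mean $\bfB$ decomposes into disjoint Jordan strings --- the function $u$ restricted to $\bfB$ may have branching in the backward direction, with many basis vectors mapping to the same basis vector (as in the appendix example, where $u(t^j)=1$ for all $j\geq 1$) --- and that is exactly why no cyclic decomposition need exist. Nevertheless, for any $e \in \bfB$ with $\nu(e)=n\geq 2$ one has $u(e)\in\bfB$ with $\nu(u(e))=n-1$, and $u(e)=0$ if $\nu(e)=1$. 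Partitioning $\bfB$ by the parity of $\nu$ then gives the splitting $V=V_0\oplus V_1$ with $u(V_0)\subset V_1$ and $u(V_1)\subset V_0$, which is Lemma~\ref{localnilpexchangelemma}. (You also do not need the extra requirement that the classes of $\mathcal{B}_n$ give a basis of $K_n/K_{n-1}$; only that $\mathcal{B}=\bigsqcup_n\mathcal{B}_n$ is a basis of $V$ and $u(\mathcal{B}_n)\subset\Span(\mathcal{B}_{n-1})$, which the adapted basis provides.)
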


\begin{theo}\label{unipotentprod2theo}
Let $V$ be a vector space with countable dimension, and $u$ be a locally nilpotent endomorphism of $V$.
Then $\id_V+u$ is the product of two involutions of $V$, and it is also the product of two unipotent endomorphisms of index $2$.
\end{theo}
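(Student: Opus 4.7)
The plan is to apply Theorem \ref{2involinfinitetheo} (together with the announced parallel characterization for products of two unipotent endomorphisms of index $2$) to $w := \id_V + u$, reducing the problem to showing that $w$ is a locally finite automorphism similar to its inverse.

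Local finiteness and invertibility are immediate from local nilpotence: fixing $x \in V$ and choosing minimal $n$ with $u^n(x)=0$, the subspace $W_x := \Span(x, u(x), \ldots, u^{n-1}(x))$ is finite-dimensional and $u$-stable, and on $W_x$ the operator $\sum_{k=0}^{n-1}(-u)^k$ is a two-sided inverse of $w|_{W_x}$. This both defines $w^{-1}$ pointwise by $\sum_{k\geq 0}(-u)^k$ and shows that $w$ preserves the finite-dimensional $u$-stable $W_x$, so $w$ is locally finite.

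The substance of the argument is the similarity $w \sim w^{-1}$, which I would establish through the Kaplansky invariants of Section \ref{Kaplanskysection}. Writing $u' := w^{-1} - \id_V = -u(\id_V+u)^{-1}$, one has
$$(u')^k = (-1)^k u^k (\id_V+u)^{-k} \quad \text{for every } k \geq 0,$$
so $u'$ is itself locally nilpotent and
$$(u')^k V = u^k V \quad \text{and} \quad \Ker((u')^k) = \Ker(u^k),$$
because $(\id_V+u)^{-k}$ is an automorphism of $V$ commuting with $u$ and preserving each $u$-stable subspace $\Ker(u^k)$. These identifications pass through all transfinite Ulm steps by simple intersection, so $w$ and $w^{-1}$ share every Kaplansky invariant at the prime $t-1$ (and trivially at every other prime, since both are locally $(t-1)$-torsion). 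The similarity classification of countable-dimensional locally finite endomorphisms by Kaplansky invariants then yields $w \sim w^{-1}$; Theorem \ref{2involinfinitetheo} and its twin for unipotent-index-$2$ factorizations deliver the two conclusions.

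The main obstacle is logically upstream of this special case, namely proving Theorem \ref{2involinfinitetheo} and the parallel characterization for unipotent-index-$2$ factorizations, and erecting the Kaplansky framework they rest on; once those are available, the verification above is essentially formal, since it only says that the Ulm data of a locally unipotent automorphism is insensitive to inversion.
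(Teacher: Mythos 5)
Your proposal has a genuine circularity that makes it untenable as a proof of this particular theorem. You propose to apply Theorem \ref{2involinfinitetheo} (and its unipotent twin, Theorem \ref{2unipotentTheo}) to $w:=\id_V+u$ after verifying $w\sim w^{-1}$ via Kaplansky invariants. But in the paper, Theorem \ref{unipotentprod2theo} is proved \emph{first} and is then an essential ingredient in the proof of Theorem \ref{2involinfinitetheo}: in Section \ref{2involutionSection}, the $(t-1)$-primary part $u_{t-1}$ (and, after a sign change, $u_{t+1}$) is handled precisely by invoking Theorem \ref{unipotentprod2theo}, because no amount of block-operator machinery in Lemmas \ref{inversediagonalLemma}, \ref{Llemma1}, \ref{Llemma2} produces an explicit factorization of a locally unipotent automorphism. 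The same dependency appears in Section \ref{2unipotentSection} for the unipotent-index-$2$ characterization. So you cannot take Theorem \ref{2involinfinitetheo} off the shelf when proving Theorem \ref{unipotentprod2theo}; the logical flow runs in the opposite direction.

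Your reduction to $w\sim w^{-1}$ is itself fine as far as it goes, and the observation that $u':=w^{-1}-\id_V=-u(\id_V+u)^{-1}$ is locally nilpotent with $\Ker((u')^k)=\Ker(u^k)$ and $(u')^k(V)=u^k(V)$, hence the same Ulm data, is correct (one indeed pushes this through all transfinite stages by the commuting-automorphism argument). But similarity of $w$ to its inverse is a priori strictly weaker than being a product of two involutions; the whole content of the theorem at the prime $t-1$ is to bridge that gap, and that bridge is exactly what is not yet available. What the paper does instead is entirely constructive: take a basis $\calB$ adapted to $u$ (Theorem \ref{nilpotentadaptedbasistheo}), define explicit involutions $a,b$ (respectively, explicit unipotents of index $2$) on $\calB$ depending on the parity of the depth $\nu(x)$ of each basis vector, and then use the perturbation result Lemma \ref{locallynilpotentperturbationLemma} to show that $ab-\id_V$ has the same Kaplansky invariants as $u$, hence is similar to $u$; this gives $\id_V+u\sim ab$, i.e.\ a genuine factorization. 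Your write-up leaves the construction of the two factors entirely to the cited theorem, and that theorem is downstream. To repair the proposal you would need either to reproduce some version of the explicit construction with Lemma \ref{locallynilpotentperturbationLemma}, or to give an independent proof of Theorem \ref{2involinfinitetheo} restricted to the $(t\pm1)$-primary case, which amounts to the same thing.
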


These two results have interesting interpretations in terms of infinite matrices.
Let $V$ be a vector space with \emph{infinite} countable dimension.
A column-finite matrix indexed over $\N$ and with entries in $\F$ is a family $(a_{i,j})_{(i,j)\in \N^2}$
of elements of $\F$ such that $\{i \in \N : \; a_{i,j} \neq 0\}$ is finite for all $j \in \N$.
This defines a vector space $\calM_{Cf}(\F)$, and it can be endowed with a structure of $\F$-algebra
by defining the matrix product just like in the finite case. We denote by $\GL_{Cf}(\F)$ its group of invertible elements.

Let $\bfB=(e_n)_{n \in \N}$ be a basis of $V$ indexed over $\N$.
To every endomorphism $u$ of $V$, we assign the matrix $M_\bfB(u):=(u_{i,j})_{(i,j)\in \N^2}$ in which
$u(e_j)=\sum_{i \in \N} u_{i,j}\, e_i$ for all $j \in \N$. This is a column-finite matrix,
and the mapping $u \in \End(V) \mapsto (u_{i,j})_{(i,j)\in \N^2}\in \calM_{Cf}(\F)$
is easily shown to be an isomorphism of $\F$-algebras.

Then one can prove the following results:
\begin{itemize}
\item An endomorphism $u$ of $V$ is locally nilpotent if and only if there exists a basis $\bfB=(e_n)_{n \in \N}$ of $V$
such that $M_\bfB(u)=(u_{i,j})$ is strictly upper-triangular, i.e.\ $u_{i,j}=0$ for all $i \geq j$.
\item An endomorphism $u$ of $V$ is locally finite if and only if there exists a basis $\bfB=(e_n)_{n \in \N}$ of $V$
such that $M_\bfB(u)=(u_{i,j})$ is block upper-triangular, i.e.\ there exists an increasing sequence $(n_i)_{i \in \N}$
of non-negative integers such that, for all $i \in \N$, for all $k > n_{i+1}$ and all $l \leq n_{i}$, one has $u_{k,l}=0$.
\end{itemize}

In particular, Theorems \ref{nilpotentsum2theo} and \ref{unipotentprod2theo}
yield the following results, where we denote by $I_\N$ the unity of the ring $\calM_{Cf}(\F)$:

\begin{theo}\label{strictlyupper2sumtheo}
Let $A$ be a strictly upper-triangular matrix of $\calM_{Cf}(\F)$.
Then $A$ is the sum of two square-zero elements of $\calM_{Cf}(\F)$.
\end{theo}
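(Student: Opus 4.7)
The plan is to reduce the statement to Theorem \ref{nilpotentsum2theo} via the algebra isomorphism $\End(V) \simeq \calM_{Cf}(\F)$ induced by a basis.

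First I would fix a vector space $V$ with a countable basis $\bfB=(e_n)_{n \in \N}$, and let $u \in \End(V)$ be the unique endomorphism such that $M_\bfB(u)=A$. Since the map $v \mapsto M_\bfB(v)$ is an isomorphism of $\F$-algebras, it suffices to prove that $u$ decomposes as a sum of two square-zero endomorphisms of $V$: for such a decomposition $u=a+b$, the matrices $M_\bfB(a)$ and $M_\bfB(b)$ are square-zero elements of $\calM_{Cf}(\F)$ whose sum equals $A$.

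Next I would verify that $u$ is locally nilpotent. This is the quick translation claim stated just before the theorem, but let me describe how I would check it directly: the strict upper-triangularity of $A=(a_{i,j})$ means $u(e_j) \in \Vect(e_0,\dots,e_{j-1})$ for every $j \in \N$, with the convention that $u(e_0)=0$. An immediate induction on $j$ gives $u^{j+1}(e_j)=0$. For an arbitrary vector $x \in V$, write $x$ as a (necessarily finite) linear combination $x=\sum_{j=0}^{N} c_j\, e_j$; then $u^{N+1}(x)=0$, showing that $u$ is locally nilpotent.

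Finally, Theorem \ref{nilpotentsum2theo} applies directly: since $V$ has countable dimension and $u$ is locally nilpotent, $u$ splits as $u=a+b$ for some square-zero endomorphisms $a,b \in \End(V)$. Transporting this decomposition via $M_\bfB$ yields the desired decomposition of $A$ in $\calM_{Cf}(\F)$, completing the proof. There is no real obstacle here; the only non-cosmetic point is the verification of local nilpotence from strict upper-triangularity, and that reduces to the elementary induction above.
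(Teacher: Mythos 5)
Your proposal is correct and follows the same route the paper takes: identify $\calM_{Cf}(\F)$ with $\End(V)$ via a basis, note that strict upper-triangularity corresponds to local nilpotence, and invoke Theorem \ref{nilpotentsum2theo}. The paper presents this as an immediate corollary without spelling out the induction on local nilpotence, which you supply correctly.
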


\begin{theo}\label{upper2prodtheo}
Let $A$ be a strictly upper-triangular matrix of $\calM_{Cf}(\F)$.
Then $I_\N+A$ is the product of two involutions in $\GL_{Cf}(\F)$.
\end{theo}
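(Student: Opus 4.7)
The plan is to reduce Theorem \ref{upper2prodtheo} to Theorem \ref{unipotentprod2theo} via the algebra isomorphism $M_\bfB$ described in the preamble to the statement. Concretely, let $V$ be a vector space with countably infinite dimension and fix a basis $\bfB=(e_n)_{n\in\N}$; then $M_\bfB : \End(V) \overset{\simeq}{\to} \calM_{Cf}(\F)$ is an isomorphism of $\F$-algebras sending $\id_V$ to $I_\N$.

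Given the strictly upper-triangular matrix $A \in \calM_{Cf}(\F)$, I set $u:=M_\bfB^{-1}(A) \in \End(V)$. The paper already records that the strict upper-triangularity of $M_\bfB(u)$ is equivalent to $u$ being locally nilpotent; alternatively one sees directly that $u(e_j) \in \Span(e_0,\dots,e_{j-1})$, which yields $u^{j+1}(e_j)=0$ by an immediate induction on $j$, and hence $u^{m+1}(x)=0$ for every $x$ in the finite-dimensional subspace $\Span(e_0,\dots,e_m)$. In particular $u$ is locally nilpotent.

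By Theorem \ref{unipotentprod2theo} applied to $u$, there exist involutions $s_1, s_2 \in \GL(V)$ such that $\id_V+u = s_1\, s_2$. Since $M_\bfB$ is an isomorphism of $\F$-algebras, it maps $\GL(V)$ onto $\GL_{Cf}(\F)$ and preserves the relations $s_i^2=\id_V$. Setting $S_i:=M_\bfB(s_i)$ for $i\in\{1,2\}$, we obtain two involutions in $\GL_{Cf}(\F)$ satisfying
$$I_\N + A = M_\bfB(\id_V+u) = M_\bfB(s_1)\, M_\bfB(s_2) = S_1\, S_2,$$
which is the desired decomposition.

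There is no real obstacle here: the entire work is concentrated in Theorem \ref{unipotentprod2theo}, and the present statement is merely its translation in the language of column-finite infinite matrices via the canonical algebra isomorphism $M_\bfB$. The only point to double-check is that the notion of ``involution'' is intrinsic to the algebra (depending only on the relation $a^2=1$), so that involutions in $\GL(V)$ correspond bijectively under $M_\bfB$ to involutions in $\GL_{Cf}(\F)$; this is immediate from the fact that $M_\bfB$ is an isomorphism of unital $\F$-algebras.
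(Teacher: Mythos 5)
Your proposal is correct and coincides with the paper's own (implicit) argument: the paper explicitly presents Theorem \ref{upper2prodtheo} as an immediate consequence of Theorem \ref{unipotentprod2theo} via the algebra isomorphism $M_\bfB$ between $\End(V)$ and $\calM_{Cf}(\F)$ and the observation that strictly upper-triangular matrices correspond to locally nilpotent endomorphisms. Nothing to add.
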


We will also obtain the following result:

\begin{theo}\label{upper2prodbistheo}
Let $U$ be an upper-triangular matrix of $\calM_{Cf}(\F)$ in which every diagonal entry equals $1$ or $-1$.
Then $U$ is the product of two involutions in $\GL_{Cf}(\F)$.
\end{theo}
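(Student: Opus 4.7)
The plan is to interpret $U$ as an endomorphism $u$ of $V$ via the standard basis $\bfB=(e_n)_{n\in\N}$, then reduce to Theorem \ref{unipotentprod2theo} applied on each generalized eigenspace for $\pm 1$. First I would check that $u$ is locally finite: since $U$ is upper-triangular and column-finite, each $W_j:=\Span(e_0,\dots,e_j)$ is $u$-stable, so every $x\in V$ lies in some $W_j$ and is annihilated by the characteristic polynomial of $u|_{W_j}$, which takes the form $(t-1)^{a_j}(t+1)^{b_j}$ because the diagonal entries of $U$ all lie in $\{-1,+1\}$.

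The case $\car(\F)=2$ is immediate: there $-1=1$, so $U=I_\N+A$ with $A$ strictly upper-triangular, and Theorem \ref{upper2prodtheo} applies directly. I would henceforth assume $\car(\F)\neq 2$, so that the coprimality of $(t-1)^a$ and $(t+1)^b$ in $\F[t]$ together with a Bézout argument yields a $u$-stable direct sum
\[
V = V_+ \oplus V_-, \qquad V_\pm := \bigcup_{n\in\N}\Ker(u\mp \id_V)^n,
\]
where each summand inherits countable dimension from $V$.

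On $V_+$, the endomorphism $u|_{V_+}-\id$ is locally nilpotent, so Theorem \ref{unipotentprod2theo} writes $u|_{V_+}=\tau_1\tau_2$ as a product of two involutions of $V_+$. On $V_-$, the endomorphism $u|_{V_-}+\id$ is locally nilpotent; setting $N:=-(u|_{V_-}+\id)$, also locally nilpotent, gives $-u|_{V_-}=\id+N$, so Theorem \ref{unipotentprod2theo} provides involutions $\sigma_1,\sigma_2$ of $V_-$ with $-u|_{V_-}=\sigma_1\sigma_2$. Since $(-\sigma_1)^2=\sigma_1^2=\id$, the endomorphism $-\sigma_1$ is itself an involution, and hence $u|_{V_-}=(-\sigma_1)\,\sigma_2$ is a product of two involutions of $V_-$.

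Reassembling the two summands, $u=(\tau_1\oplus(-\sigma_1))\circ(\tau_2\oplus\sigma_2)$ is a product of two involutions of $V$, and transporting back via $\bfB$ shows $U$ is a product of two involutions in $\GL_{Cf}(\F)$. The only delicate point is the primary decomposition in characteristic not $2$: although the individual annihilator $(t-1)^a(t+1)^b$ depends on the element $x$, the Bézout relation between $(t-1)^a$ and $(t+1)^b$ does glue into a global splitting. All the substantial work in the argument is done by Theorem \ref{unipotentprod2theo}; everything else amounts to bookkeeping between the two eigenvalues and a single sign flip.
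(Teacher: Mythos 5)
Your proof is correct, but it takes a genuinely different route from the paper's. The paper proves Theorem \ref{upper2prodbistheo} by verifying condition (iii) of Theorem \ref{2involinfinitetheo}: it observes that for every $p\in\Irr(\F)\setminus\{t,t-1,t+1\}$ the matrix $p(U)$ is upper-triangular with all diagonal entries non-zero (since $p(\pm 1)\neq 0$), so $p(u)$ is an automorphism and $\kappa_\alpha(u,p)=0$ for all $\alpha\in\omega_1\cup\{\infty\}$; combined with the fact that $t-1$ and $t+1$ are their own reciprocal polynomials, condition (iii) of that theorem holds, and one concludes. You instead carry out the primary decomposition $V=\Tor_{(t-1)^\infty}(u)\oplus\Tor_{(t+1)^\infty}(u)$ explicitly (justified, as you note, by the fact that each vector is annihilated by some $(t-1)^a(t+1)^b$, so only these two primary components appear), apply Theorem \ref{unipotentprod2theo} to each summand, and use the sign trick $u|_{V_-}=(-\sigma_1)\sigma_2$ for the $(-1)$-primary part — which is precisely the manipulation the paper performs internally in the proof of Theorem \ref{2involinfinitetheo} for $u_{t+1}$. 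Your route is more self-contained, relying only on Theorem \ref{unipotentprod2theo} and elementary primary decomposition, and bypasses the full Kaplansky-invariant machinery; the paper's route is shorter once Theorem \ref{2involinfinitetheo} is in hand and fits its overall strategy of reducing everything to Kaplansky invariants. Both are valid, and your separate treatment of $\car(\F)=2$ via Theorem \ref{upper2prodtheo} is the right move.
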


In \cite{Slowik}, Theorem \ref{strictlyupper2sumtheo} was stated and given an incorrect proof:
this claim was later retracted \cite{Slowikcorr}. In the appendix of the present article,
we shall explain why the approach taken in \cite{Slowik} is hopeless.

\vskip 3mm
The remainder of the article is structured as follows:
in Section \ref{Kaplanskysection}, we recall all the results on Kaplansky invariants that are necessary for our proofs.
The next two sections are devoted to technical results that are used in the proofs of Theorems \ref{2squarezeroinfinitetheo}
and \ref{2involinfinitetheo}: in Section \ref{quadraticreviewSection}, we review some basic results on the sum and the product of two quadratic elements
of an algebra; in Section \ref{blockoperatorsSection}, we consider $2$ by $2$ block operators and for specific types of such operators we prove various decomposability results (into the sum of two square-zero endomorphisms, or into the product of two involutions or of two unipotent operators of index $2$). Theorems \ref{2squarezeroinfinitetheo} and \ref{2involinfinitetheo} are proved in Sections \ref{2squarezeroSection} and \ref{2involutionSection}, respectively: there, for each decomposition problem we also give a characterization solely in terms of Kaplansky invariants.
Finally, in Section \ref{2unipotentSection}, we characterize the products of two unipotent endomorphisms of index $2$ among the
locally finite endomorphisms of a countable-dimensional vector space.

In the appendix, we shall dissect the flaws of the strategy undertaken in \cite{Slowik} to prove Theorem \ref{nilpotentsum2theo}.

\section{A review of Kaplansky invariants of locally-finite endomorphisms of countable dimensional spaces}\label{Kaplanskysection}

\subsection{The decomposition into $p$-primary parts}

Let $u$ be an endomorphism of a vector space $V$. Remember that $V^u$ denotes the
$\F[t]$-module $(V,+,\bullet)$ where the scalar multiplication $\bullet$ is defined by
$p \bullet x:=p(u)[x]$ for all $p \in \F[t]$ and all $x \in V$.

In general, the torsion of $u$ is the submodule
$$\Tor(u):=\bigl\{x \in V : \; \exists r \in \F[t] \setminus \{0\} : r(u)[x]=0\bigr\}$$
of $V^u$. Hence, $u$ is locally finite if and only if $V=\Tor(u)$.
Given a non-zero polynomial $p \in \F[t]$, the \textbf{$p$-torsion of $u$} is defined as
$$\Tor_p(u):=\Ker p(u).$$
It is a submodule of $V^u$.

We denote by $\Irr(\F)$ the set of all monic irreducible polynomials of $\F[t]$.
Given $p \in \Irr(\F)$, the \textbf{$p^\infty$-torsion} of $u$ is defined as
$$\Tor_{p^\infty}(u):=\bigl\{x \in V : \; \exists n \in \N : \; p^n(u)[x]=0\bigr\}=\underset{n \in \N}{\bigcup} \Tor_{p^n}(u).$$
Again, it is a submodule of $V^u$.
Using B\'ezout's theorem, we obtain
$$\Tor(u)=\underset{p \in \Irr(\F)}{\bigoplus} \Tor_{p^\infty}(u).$$
Hence, $u$ is locally finite if and only if
$$V=\underset{p \in \Irr(\F)}{\bigoplus} \Tor_{p^\infty}(u).$$
Given $p \in \Irr(\F)$, we will say that $u$ is \textbf{$p$-primary} when
$\Tor_{p^\infty}(u)=V$ i.e.\ when $p(u)$ is locally nilpotent.

In any case, $u$ stabilizes $\Tor_{p^\infty}(u)$
and induces a $p$-primary endomorphism of it, called the \textbf{$p$-primary part} of $u$ and denoted by $u_p$.

\subsection{Kaplansky invariants}

From now on, we fix a set $\infty$ which is not an ordinal (say $\infty=\{\{\emptyset\}\}$).
We also denote by $\omega_1$ the first uncountable ordinal.

Let $u \in \End(V)$.
By transfinite induction, we define a non-increasing sequence $p^\alpha(V^u)$ of submodules of $V^u$ as follows:
\begin{itemize}
\item $V_0:=V$;
\item For every ordinal $\alpha$ that has a predecessor, $p^\alpha(V^u):=p(u)\bigl(p^{\alpha-1}(V^u)\bigr)$;
\item For every limit ordinal $\alpha$, $p^\alpha(V^u):=\underset{\beta<\alpha}{\bigcap} p^\beta(V^u)$.
\end{itemize}
The sequence $\bigl(p^\alpha(V^u)\bigr)_\alpha$ terminates and we denote by $p^\infty(V^u)$ its final value.

For every ordinal $\alpha$, the quotient $p^\alpha(V^u)/p^{\alpha+1}(V^u)$ has a natural structure of vector space over the residue
field $\F[t]/(p)$, and for this vector space structure the mapping $p(u)$ induces a linear map
$$p^\alpha(V^u)/p^{\alpha+1}(V^u) \longrightarrow p^{\alpha+1}(V^u)/p^{\alpha+2}(V^u).$$
We denote by $\kappa_{\alpha}(u,p)$ the dimension over $\F[t]/(p)$ of the kernel of that map: it is called the
\textbf{Kaplansky invariant} of order $\alpha$ of $u$ with respect to $p$.
It can also be interpreted as the dimension of the quotient space
$\bigl(\Ker p(u) \cap p^\alpha(V^u)\bigr)/\bigl(\Ker p(u) \cap p^{\alpha+1}(V^u)\bigr)$ over $\F[t]/(p)$.

Finally, the intersection $\Ker p(u) \cap p^\infty(V^u)$ is a vector space over $\F[t]/(p)$
and we denote its dimension by $\kappa_\infty(u,p)$.

By induction, one shows that $\Ker p(u) \cap p^\alpha(V^u)=\Ker \bigl(p(u_p)\bigr) \cap p^\alpha\bigl(\Tor_{p^\infty}(u)^{u_p}\bigr)$ for every ordinal $\alpha$, and we deduce that $\kappa_\infty(u_p,p)=\kappa_\infty(u,p)$ for every ordinal $\alpha$, as well as for $\alpha=\infty$.
Thus, $u$ and its $p$-primary part $u_p$ have the same Kaplansky invariants with respect to $p$.

Note finally that if $V$ has countable dimension, then the sequence $\bigl(p^\alpha(V^u)\bigr)_\alpha$ terminates at some countable ordinal,
and in particular $\kappa_\alpha(u,p)=0$ for every uncountable ordinal $\alpha$. In particular, in the countable-dimensional setting
we shall only consider the invariants $\kappa_\alpha(u,p)$ where $\alpha$ belongs to $\omega_1 \cup \{\infty \}$.

\subsection{The classification theorem}

\begin{theo}[Ulm-Mackey-Kaplansky, see \cite{Fuchs,Kaplansky}]\label{KaplanskyTheorem}
Let $u,v$ be locally finite endomorphisms of vector spaces with countable dimension.
Then $u$ and $v$ are similar if and only if
$$\forall p \in \Irr(\F), \; \forall \alpha \in \omega_1 \cup \{\infty\}, \;
\kappa_\alpha(u,p)=\kappa_\alpha(v,p).$$
\end{theo}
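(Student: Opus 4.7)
The necessity is straightforward: any $\F[t]$-linear isomorphism $\varphi : V^u \to W^v$ realizing the similarity sends $p^\alpha(V^u)$ onto $p^\alpha(W^v)$ and $\Ker p(u)$ onto $\Ker p(v)$, hence induces the required $\F[t]/(p)$-linear isomorphisms on the quotients defining the Kaplansky invariants.

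For the converse, the plan is a two-stage reduction of Ulm--Kaplansky type. First I would collapse to the $p$-primary case. Since $u$ and $v$ are locally finite, $V = \bigoplus_{p \in \Irr(\F)} \Tor_{p^\infty}(u)$ and similarly for $W$; the excerpt records that $\kappa_\alpha(u_p,p) = \kappa_\alpha(u,p)$ while $u_p$ has trivial Kaplansky invariants at every $q \neq p$. Thus it suffices to prove that $u_p$ and $v_p$ are similar for each $p \in \Irr(\F)$, after which the direct sum of these similarities yields $u \simeq v$.

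Now fix $p$ and assume $u$ and $v$ are both $p$-primary with equal Kaplansky invariants at $p$. Enumerate countable generating families $(x_n)_{n \in \N}$ of $V^u$ and $(y_n)_{n \in \N}$ of $W^v$, and construct by induction an increasing chain of partial $\F[t]$-linear isomorphisms $\psi_n : V_n \overset{\simeq}{\to} W_n$ between finitely generated submodules that preserve the height function $x \mapsto \sup\{\alpha : x \in p^\alpha(V^u)\}$, alternately absorbing $x_n$ into $V_{2n+1}$ and $y_n$ into $W_{2n+2}$; the union $\bigcup_n \psi_n$ then extends to a similarity $V \to W$. The engine is a one-element extension lemma: given such a height-preserving $\psi : V_0 \overset{\simeq}{\to} W_0$ and an element $x \in V$, one extends $\psi$ to a height-preserving isomorphism whose domain contains $x$. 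Replacing $x$ by $p(u)^k x$ for the largest $k$ with $p(u)^k x \notin V_0$, the task reduces to exhibiting some $y \in W$ satisfying $p(v)\,y = \psi(p(u) x)$ and attaining the same height as $x$ modulo $W_0$.

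The main obstacle lies precisely in this height-matching step, and it is the only place where the equality of Kaplansky invariants is genuinely used. Setting $\alpha$ to be the height of $x$ modulo $V_0$, one must argue that among the $v$-preimages of $\psi(p(u)x)$ under $p(v)$ there exists one of height exactly $\alpha$ modulo $W_0$; this is established by a dimension count inside the $\F[t]/(p)$-space $\bigl(\Ker p(u) \cap p^\alpha(V^u)\bigr)\big/\bigl(\Ker p(u) \cap p^{\alpha+1}(V^u)\bigr)$, after subtracting the finite-dimensional contribution already exhausted by $V_0$, and comparing with the corresponding quotient on the $W$-side. The boundary case $\alpha = \infty$ needs a separate treatment based on the fact that $p(v)$ acts surjectively on $p^\infty(W^v)$, which delivers the lift without any constraint on heights above.
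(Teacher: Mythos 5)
The paper cites this as a known theorem (Ulm--Mackey--Kaplansky, with references to Fuchs and Kaplansky) and gives no proof, so there is no in-text argument to compare your attempt against; I can only assess the sketch on its own terms. Your outline is the standard one from those references: reduce to the $p$-primary case via the decomposition $V=\bigoplus_p \Tor_{p^\infty}(u)$, then run a back-and-forth argument building a chain of height-preserving isomorphisms between finitely generated submodules, with the one-element extension step secured by the equality of the invariants $\kappa_\alpha$. The strategy is correct. Two points are worth flagging as the places where a full write-up would need real work. First, on the $\alpha=\infty$ case: the reference proofs usually split off the maximal divisible submodule $p^\infty(V^u)$ at the outset (it is a direct summand, a direct sum of $\kappa_\infty(u,p)$ copies of the Pr\"ufer module over $\F[t]_{(p)}$), and then run Ulm's back-and-forth only on the reduced complement. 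Folding $\alpha=\infty$ into the same induction, as you propose, is feasible, but your justification --- that surjectivity of $p(v)$ on $p^\infty(W^v)$ ``delivers the lift without any constraint on heights above'' --- understates what is needed: one must also choose the lift $y$ outside $W_0$ so that $\F[t]y\cap W_0=\F[t]\,p(v)y$, and that freedom is exactly what $\kappa_\infty(u,p)=\kappa_\infty(v,p)$ buys after discounting the finite-dimensional part of the socle already captured by $W_0$. Second, the ``dimension count'' at a finite ordinal $\alpha$ relies on two facts that should be made explicit: finiteness of the used-up subspace comes from $V_0,W_0$ being finitely generated, and the equality of the two used-up amounts on the $V$- and $W$-sides is a consequence of $\psi$ being a height-preserving isomorphism (so that it identifies $\Ker p(u)\cap p^\alpha(V^u)\cap V_0$ with $\Ker p(v)\cap p^\alpha(W^v)\cap W_0$ modulo the next stage). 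These are the crux of the extension lemma and cannot be left at the level of ``after subtracting the finite-dimensional contribution.''
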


\subsection{Adapted bases}\label{adaptedbasesSection}

The following results are generally attributed to Zippin, who proved analogous statements in the theory
of countably-generated torsion modules over $\Z$ (i.e.\ countably-generated abelian groups).

\begin{theo}[Adapted basis theorem]\label{adaptedbasisTheo}
Let $p \in \Irr(\F)$ have degree $d$, and let $u$ be a $p$-primary endomorphism of a countable-dimensional vector space $V$.
Then there is a family $(e_i)_{i \in I}$ of vectors of $V$ that is \textbf{adapted to $u$} in the following sense:
\begin{itemize}
\item The family $(u^k(e_i))_{i \in I, \; 0 \leq k<d}$ is a basis of the vector space $V$;
\item For all $i \in I$, either $p(u).e_i=0$ or $p(u).e_i=e_j$ for some $j \in I$.
\end{itemize}
\end{theo}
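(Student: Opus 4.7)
The plan is to imitate Zippin's classical argument for countably-generated abelian $p$-groups, transposed to our setting where $\F[t]$ plays the role of $\Z$ and the irreducible polynomial $p$ plays the role of a rational prime. An adapted family is in fact an $\F$-basis compatible with a direct-sum decomposition of $V^u$ into \emph{chain submodules}: each $e_i$ with $p(u)e_i=0$ sits at the bottom of a maximal chain $e_i, e_{j_1}, e_{j_2}, \dots$ determined by $p(u)e_{j_{k+1}}=e_{j_k}$, and such a chain generates a cyclic submodule isomorphic to $\F[t]/(p^n)$ if it has length $n$, or a Pr\"ufer-type $\F[t]$-module if infinite. Thus the theorem is essentially equivalent to the structure statement that every countable-dimensional $p$-primary $\F[t]$-module is a direct sum of such chain submodules.

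To construct the adapted family, I would fix a countable $\F$-basis $(v_n)_{n \in \N}$ of $V$ and build by induction a nested sequence $W_0 \subset W_1 \subset \cdots$ of finite-dimensional \emph{pure} $\F[t]$-submodules of $V^u$, each already equipped with an adapted family and each containing $v_0,\dots,v_{n-1}$. Purity here means $W_n \cap p^\alpha(V^u) = p^\alpha(W_n)$ for every ordinal $\alpha$: it is the infinite-dimensional generalisation of the ``direct summand'' property used in the finite-dimensional structure theorem to slice off cyclic factors, and it is exactly what guarantees that the chains already present in $W_n$ can still be extended through $p(u)$ within $V$. The union $\bigcup_n W_n = V$ will then yield the desired adapted family, with $I$ indexing the chain-vertices that have been opened along the way.

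The main difficulty lies in the enlargement step from $W_n$ to $W_{n+1}$: we must incorporate $v_n$ while preserving both purity and the chain-based adapted structure. The idea is to lift the class of $v_n$ in $V/W_n$ to a representative of maximal height in the filtration $\bigl(p^\alpha(V^u)\bigr)_\alpha$, and then to inspect its behaviour under iterated application of $p(u)$ to decide whether it opens a fresh chain, extends a chain of $W_n$ upwards by a $p(u)$-preimage, or attaches to one from below. Each such adjustment requires only finitely many new chain vertices because $v_n$ is killed by some $p(u)^N$ ($u$ being $p$-primary), but to orchestrate the height arguments and the various lifts one needs the filtration $\bigl(p^\alpha(V^u)\bigr)_\alpha$ to stabilise at a countable ordinal, which is guaranteed by the hypothesis that $\dim_\F V$ is countable. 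The purity-preserving lifting lemma---the $\F[t]$-module transcription of Zippin's extension theorem for pure subgroups of a countable abelian $p$-group---is thus the central technical point, and it is where I expect the bulk of the work to lie.
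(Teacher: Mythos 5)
Your opening paragraph already contains a decisive error. You treat the adapted family as a disjoint union of maximal chains, each generating a cyclic module $\F[t]/(p^n)$ or a Pr\"ufer-type module, and claim the theorem is equivalent to the statement that every countable-dimensional $p$-primary $\F[t]$-module decomposes as a direct sum of such modules. But the theorem as stated allows \emph{branching}: several $e_i$ may have the same image $e_j$ under $p(u)$, so the combinatorial structure is a forest, not a disjoint union of chains, and the stronger direct-sum claim is false. Consider the $\F[t]$-analogue of Pr\"ufer's countable reduced $p$-group of Ulm length $\omega+1$: take $u$ $p$-primary so that $V^u$ is generated by $b_1,b_2,\dots$ with $p(u)^{n+1}[b_n]=0$ and $p(u)^n[b_n]=p(u)^m[b_m]$ for all $m,n\geq 1$. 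Setting $a:=p(u)[b_1]$, one has $p^\omega(V^u)=\F[t].a\cong\F[t]/(p)$, nonzero and not divisible; whereas in any decomposition $V^u=C\oplus D$ with $C$ a direct sum of cyclic modules and $D$ a direct sum of Pr\"ufer modules one would get $p^\omega(V^u)=D$, which is divisible. This $V^u$ does admit an adapted family, namely $\{a\}\cup\{p(u)^k[b_n]\,:\,n\geq 1,\ 0\leq k<n\}$, but there $a$ has infinitely many $p(u)$-preimages, which is exactly the branching your chain picture excludes.

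Your inductive construction also fails at the outset. You require a nested sequence $W_0\subset W_1\subset\cdots$ of finite-dimensional pure submodules (in fact \emph{isotype}, since you impose $W_n\cap p^\alpha(V^u)=p^\alpha(W_n)$ for every ordinal $\alpha$) with $v_0,\dots,v_{n-1}\in W_n$. But if $V^u$ has a nonzero element $x$ of infinite height -- for instance any nonzero element of $\Ker p(u)$ in a Pr\"ufer-type module -- then purity of $W$ with $x\in W$ forces $x\in p^k(W)$ for every $k\in\N$, hence $W$ must be infinite-dimensional. So the required finite-dimensional pure $W_n$ simply do not exist once some $v_i$ has infinite height, and the ascending union cannot be built. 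A correct proof must allow branching and cannot rest on pure finite-dimensional approximations; the standard route (Zippin, Kaplansky, Fuchs) is a transfinite recursion along the Ulm filtration using \emph{nice} rather than pure submodules. For the record, the paper does not prove this statement -- it cites it to Zippin's work on countably-generated torsion $\Z$-modules -- so there is no internal proof here to compare with; but the argument you propose does not reach the conclusion.
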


We will need the following special case:

\begin{theo}[Adapted basis theorem for locally nilpotent endomorphisms]\label{nilpotentadaptedbasistheo}
Let $u$ be a locally nilpotent endomorphism of a countable-dimensional vector space $V$.
Then there is a basis $\bfB$ of $V$ such that $u$ maps every vector of $\bfB$
to either a vector of $\bfB$ or the zero vector.
\end{theo}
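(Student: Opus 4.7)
The plan is to deduce Theorem \ref{nilpotentadaptedbasistheo} as the special case $p = t$ of the general adapted basis theorem (Theorem \ref{adaptedbasisTheo}), which the paper attributes to Zippin. First I would observe that $t \in \Irr(\F)$ is irreducible of degree $d = 1$, and that a $t$-primary endomorphism is exactly a locally nilpotent one: indeed, $\Tor_{t^\infty}(u) = V$ unpacks to ``$\forall x \in V,\ \exists n \in \N :\ u^n(x) = 0$''. Applying Theorem \ref{adaptedbasisTheo} with $p = t$ then produces a family $(e_i)_{i \in I}$ in $V$ such that $(u^k(e_i))_{i \in I,\,0 \leq k < 1} = (e_i)_{i \in I}$ is a basis of $V$ and such that, for every $i \in I$, either $u(e_i) = 0$ or $u(e_i) = e_j$ for some $j \in I$. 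Since the $e_i$ are linearly independent they are pairwise distinct, so $\mathbf{B} := \{e_i : i \in I\}$ is the desired basis.

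If one preferred a self-contained argument bypassing Theorem \ref{adaptedbasisTheo}, the module-theoretic route would go as follows. Let $D := t^\infty(V^u)$ be the largest divisible $u$-invariant $\F[t]$-submodule of $V$ (well-defined as the final value of the terminating transfinite sequence introduced in Section \ref{Kaplanskysection}), and split $V = D \oplus R$ for some reduced complement $R$; such a splitting exists because divisible modules over the PID $\F[t]$ are injective, so the embedding $D \hookrightarrow V$ splits. The reduced part $R$ is a countable-dimensional reduced $t$-primary module which, by Pr\"ufer's structure theorem (applied through Kulikov's criterion to the filtration $R \cap \Ker u^n$ consisting of subspaces of bounded $t$-order), decomposes as a direct sum of finite cyclic $\F[t]$-submodules, each yielding a finite adapted chain $x,u(x),\dots,u^{k-1}(x)$. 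The divisible part $D$, being a countable-dimensional divisible $t$-primary module, is a direct sum of Pr\"ufer modules of the form $\varinjlim_n \F[t]/(t^n)$, each admitting an infinite adapted chain $(e_n)_{n \geq 1}$ satisfying $u(e_{n+1}) = e_n$ and $u(e_1) = 0$. Concatenating all these chains produces $\mathbf{B}$.

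The main obstacle in this second route is Pr\"ufer's direct-sum theorem for countable reduced $t$-primary modules, which is a genuinely nontrivial structure theorem over a PID and is essentially the content of Zippin's classical result transported from abelian groups to $\F[t]$-modules. This is why invoking Theorem \ref{adaptedbasisTheo} as a black box is the economical option here, and I would expect the paper to do the same rather than reproving Zippin from scratch.
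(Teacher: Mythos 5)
Your first paragraph is correct and is exactly the paper's approach: Theorem \ref{nilpotentadaptedbasistheo} is the specialization of Theorem \ref{adaptedbasisTheo} to $p=t$ (degree $d=1$), and the paper introduces it simply as ``the following special case'' with no further proof.

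The second, purportedly self-contained route contains a genuine error, and it is worth isolating because it is precisely the trap the appendix of this very paper is devoted to dissecting. It is false that a reduced countable $t$-primary $\F[t]$-module must decompose as a direct sum of cyclic submodules. Kulikov's criterion requires a filtration $\bigcup_n G_n$ in which, for each $n$, the nonzero elements of $G_n$ have bounded \emph{height} in the ambient module; the filtration $R \cap \Ker u^n$ that you invoke only controls the \emph{order} (annihilator exponent), not the height, and $\Ker u$ can contain elements of transfinite height. Proposition \ref{contrex} gives a concrete locally nilpotent $u$ on a countable-dimensional $V$ that is \emph{not} a direct sum of Jordan cells; there $\Co(u)=\Span(1)$ and $u(\Co(u))=\{0\}$, so $t^\omega(V^u)=\Span(1)$ and $t^{\omega+1}(V^u)=\{0\}$, hence $V^u$ is reduced, yet $1 \in \Ker u$ has height $\omega$. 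Were the divisible--reduced decomposition $V=D\oplus R$ with $R$ a direct sum of finite cyclic modules and $D$ a direct sum of Pr\"ufer modules always available, every locally nilpotent $u$ on a countable-dimensional space would be a direct sum of Jordan cells, contradicting Proposition \ref{contrex} (and, as the appendix explains, this is essentially the flaw in \cite{Slowik}). The adapted-basis statement is strictly weaker than a Jordan-cell decomposition: the assignment $e_i \mapsto e_j$ can merge chains, yielding a forest rather than a disjoint union of paths, and that flexibility is unavoidable. Your first route is therefore not merely economical; the second route does not go through.
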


Finally, we need to identify the possible Kaplansky invariants for a $p$-primary endomorphism:

\begin{Def}
Let $(m_\alpha)_{\alpha \in \omega_1 \cup \{\infty\}}$ be a family of countable cardinals.
We say that it is \textbf{admissible} whenever it satisfies the following two conditions:
\begin{enumerate}[(i)]
\item The set $\{\alpha \in \omega_1 : m_\alpha \neq 0\}$ is countable.
\item For every ordinal $\alpha \in \omega_1$, if $m_{\alpha+k}=0$ for all $k \in \N$ then
$m_\beta=0$ for every ordinal $\beta \in \omega_1$ such that $\alpha \leq \beta$.
\end{enumerate}
\end{Def}

\begin{theo}\label{admissiblesequenceTheo}
Let $(m_\alpha)_{\alpha \in \omega_1 \cup \{\infty\}}$ be a family of countable cardinals, and $p \in \Irr(\F)$.
The following conditions are equivalent:
\begin{enumerate}[(i)]
\item There exists a vector space $V$ with countable dimension, together with a $p$-primary endomorphism $u$ of $V$
such that $\kappa_\alpha(u,p)=m_\alpha$ for all $\alpha \in \omega_1 \cup \{\infty\}$.
\item The family $(m_\alpha)_{\alpha \in \omega_1 \cup \{\infty\}}$ is admissible.
\end{enumerate}
\end{theo}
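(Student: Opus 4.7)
The plan is to establish both directions of the equivalence by translating Zippin's classical realization theorem for countable abelian $p$-groups to the setting of countable-dimensional $p$-primary $\F[t]$-modules. Throughout, I will write $K_\alpha := \Ker p(u) \cap p^\alpha(V^u)$, so that $\kappa_\alpha(u,p) = \dim_{\F[t]/(p)} K_\alpha / K_{\alpha+1}$.

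For the necessity (i) $\Rightarrow$ (ii), the countability of the support $\{\alpha \in \omega_1 : m_\alpha \neq 0\}$ follows by picking, for each such $\alpha$, a vector $x_\alpha \in K_\alpha \setminus K_{\alpha+1}$: a standard filtration argument (if $\alpha_1 < \cdots < \alpha_n$ and $\sum c_i x_{\alpha_i} = 0$, reduce modulo $K_{\alpha_2}$ to get $c_1 = 0$, and iterate) shows the family $(x_\alpha)$ is linearly independent in $V$, so its cardinality is bounded by $\dim_\F V \leq \aleph_0$. For the Zippin-type condition, suppose $\kappa_{\alpha+k}(u,p) = 0$ for every $k \in \N$. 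Then $K_{\alpha+k} = K_\alpha$ for all $k$, and the definition $p^{\alpha+\omega}(V^u) = \bigcap_k p^{\alpha+k}(V^u)$ yields $K_{\alpha+\omega} = K_\alpha$. I then prove by transfinite induction on $\beta \geq \alpha$ that $K_\beta = K_\alpha$: the limit step is automatic from the intersection definition, while for the successor step I apply the adapted basis theorem (Theorem \ref{adaptedbasisTheo}) to $u_p$ and use the fact that any socle vector lying in $K_\alpha$ whose chain of iterated $p(u)$-antecedents in the adapted basis were to terminate at a finite number of steps above $\alpha$ would produce a non-zero $\kappa_{\alpha+k}$; hence such chains must extend indefinitely, forcing the element into $p^\beta(V^u)$.

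For the sufficiency (ii) $\Rightarrow$ (i), I construct $V$ and $u$ as a countable direct sum of elementary $p$-primary building blocks matching the prescribed invariants. For each $n < \omega$ with $m_n \neq 0$ and each $i < m_n$, I add a cyclic summand $\F[t]/(p^{n+1})$, whose unique socle vector has height exactly $n$. For each countable ordinal $\alpha \geq \omega$ with $m_\alpha \neq 0$ and each $i < m_\alpha$, I add an elementary block built by Zippin's amalgamation: the admissibility condition yields cofinally many $\gamma < \alpha$ with $m_\gamma \neq 0$, and along a cofinal $\omega$-sequence in $\alpha$ one amalgamates cyclic modules of increasing lengths by identifying their socle vectors with a common new generator, whose height ends up being exactly $\alpha$. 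Finally, $m_\infty$ copies of the Pr\"ufer-type module $\F[t][1/p]/\F[t]$ account for the invariant at $\infty$. Summing the Kaplansky contributions of the blocks recovers the prescribed sequence.

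The main obstacle and only non-routine ingredient is the amalgamation construction at countable limit ordinals: producing a socle element of precisely a prescribed limit height $\alpha$ requires a careful choice of cofinal sequence and of compatible relations so that the glued vector reaches height $\alpha$ but not $\alpha+1$, and the admissibility condition is exactly what makes this coherent. Since both directions are essentially direct module-theoretic transcriptions of Zippin's theorem for countable abelian $p$-groups, I would rely on \cite{Kaplansky, Fuchs} for the full construction rather than reproducing it in detail.
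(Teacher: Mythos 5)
The paper does not prove this theorem: it states it in Section 2.4 as one of the results ``generally attributed to Zippin'' and refers the reader to \cite{Fuchs,Kaplansky}. Your closing remark that you would defer to those references for details is therefore consistent with the paper's own treatment. That said, the details of your sketch deserve comment. Your necessity argument is essentially sound, though the transfinite induction via the adapted basis theorem is both imprecise (the ``chains of antecedents'' go the wrong way relative to what is claimed) and unnecessary: if $\kappa_{\alpha+k}(u,p)=0$ for all $k<\omega$, then $\Ker p(u)\cap p^\alpha(V^u)\subset p^{\alpha+k}(V^u)$ for every $k$, and a straightforward induction on the least $n$ with $p(u)^n[x]=0$ (correcting $x$ by a socle element at each stage) shows that every vector of $p^\alpha(V^u)$ already lies in $\bigcap_k p^{\alpha+k}(V^u)$. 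Hence $p^\alpha(V^u)=p^{\alpha+\omega}(V^u)$, the filtration is stationary from $\alpha$ on, and all $\kappa_\beta$ with $\beta\geq\alpha$ vanish; the adapted basis theorem is not needed for this.

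The sufficiency sketch contains a genuine gap. You propose realizing $(m_\alpha)$ as a direct sum that contains, for each $\alpha\geq\omega$ with $m_\alpha\neq 0$, independent copies of an ``elementary block'' whose sole nonzero Kaplansky invariant is located at $\alpha$. No such block exists: by the necessity direction (applied to the hypothetical block), its invariants would form an admissible family, but the family that is nonzero only at a single $\alpha\geq\omega$ violates condition (ii) of admissibility at $\alpha=0$. Every $p$-primary module with $\kappa_\alpha\neq 0$ for some $\alpha\geq\omega$ necessarily carries nonzero invariants at cofinally many ordinals below $\alpha$; this is the price of the scaffolding in any amalgamation, and it unavoidably contaminates the lower invariants. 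Summing independent such blocks would therefore generically overshoot the prescribed $m_\gamma$ at lower $\gamma$. (A lesser problem: a cofinal $\omega$-sequence exists in $\alpha$ only when $\alpha$ is a limit, so your recipe as phrased does not cover successor ordinals above $\omega$.) Zippin's actual construction, as carried out in \cite{Kaplansky,Fuchs}, is a global transfinite recursion in which the scaffolding for the higher invariants is shared with, not added independently of, the modules realizing the lower ones; the admissibility hypothesis is precisely what guarantees there is enough support at the lower levels to thread this through. The accounting is considerably more delicate than a direct sum of one-invariant pieces, and reducing it to such a direct sum is exactly the step that cannot be made.
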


\subsection{Some consequences on the similarity of locally finite endomorphisms}

\begin{Not}
Let $\lambda \in \F \setminus \{0\}$, and let $p \in \F[t]$ be monic with degree $d$.
Then we set
$$H_\lambda(p):=\lambda^{-d}\,p(\lambda t).$$
\end{Not}

\begin{prop}\label{invariantsH}
Let $u \in \End(V)$ and let $p \in \Irr(\F)$ and $\lambda \in \F \setminus \{0\}$.
Set $q:=H_\lambda(p)$.
Then $\kappa_\alpha(\lambda u,p)=\kappa_\alpha (u,q)$ for every $\alpha \in \omega_1 \cup \{\infty\}$.
\end{prop}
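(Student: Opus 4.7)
The key identity is $q(u) = \lambda^{-d}\,p(\lambda u)$, which is immediate from $q(t) = \lambda^{-d}\,p(\lambda t)$. First I would check that $q$ is monic (built into the normalization) and irreducible: the substitution $t \mapsto \lambda t$ is an $\F$-algebra automorphism of $\F[t]$, so it permutes the irreducible polynomials, and $q$ is simply the monic rescaling of the image of $p$ under this automorphism. So $q \in \Irr(\F)$ and $\deg q = d$.

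Next, by transfinite induction on the ordinal $\alpha$, I would prove that $p^\alpha(V^{\lambda u}) = q^\alpha(V^u)$. The successor step is immediate since $p(\lambda u)$ and $q(u)$ are non-zero scalar multiples of one another and therefore have the same image on any given subspace; the limit step passes through intersection. Taking the terminating value yields $p^\infty(V^{\lambda u}) = q^\infty(V^u)$ as well, and the same scalar-multiple observation gives $\Ker p(\lambda u) = \Ker q(u)$.

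Consequently, for every ordinal $\alpha$ the two quotient spaces
$$\bigl(\Ker p(\lambda u) \cap p^\alpha(V^{\lambda u})\bigr)\big/\bigl(\Ker p(\lambda u) \cap p^{\alpha+1}(V^{\lambda u})\bigr)$$
and
$$\bigl(\Ker q(u) \cap q^\alpha(V^u)\bigr)\big/\bigl(\Ker q(u) \cap q^{\alpha+1}(V^u)\bigr)$$
coincide as $\F$-vector spaces; likewise $\Ker p(\lambda u) \cap p^\infty(V^{\lambda u}) = \Ker q(u) \cap q^\infty(V^u)$. The first quotient carries its $\F[t]/(p)$-structure via $t \cdot x = \lambda u(x)$, the second its $\F[t]/(q)$-structure via $t \cdot x = u(x)$. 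Both $\F[t]/(p)$ and $\F[t]/(q)$ are degree-$d$ extensions of $\F$, and for any $\F$-vector space $W$ endowed with a module structure over a degree-$d$ extension $K$ of $\F$, the $K$-dimension of $W$ is determined by $\dim_\F W$ alone (it equals $(\dim_\F W)/d$ in the finite case and $\dim_\F W$ otherwise). Thus the two dimensions agree, giving $\kappa_\alpha(\lambda u, p) = \kappa_\alpha(u, q)$ for every $\alpha \in \omega_1 \cup \{\infty\}$.

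There is no real obstacle here: the statement is essentially a change-of-variable formula and the argument is pure bookkeeping. The only mildly delicate point is that the same underlying $\F$-vector space is viewed through two different but equivalent residue-field structures, and one must check that this does not affect the cardinal that is being measured.
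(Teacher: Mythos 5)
Your proof is correct and follows essentially the same route as the paper: exploit the scalar-multiple identity relating $p(\lambda u)$ and $q(u)$ to show by transfinite induction that the two filtrations $p^\alpha(V^{\lambda u})$ and $q^\alpha(V^u)$ coincide (and likewise the kernels), then pass from $\F$-dimensions to residue-field dimensions using $\deg p=\deg q=d$. The only cosmetic difference is that you invoke the alternative description of $\kappa_\alpha$ via the quotient $\bigl(\Ker p(u)\cap p^\alpha(V^u)\bigr)/\bigl(\Ker p(u)\cap p^{\alpha+1}(V^u)\bigr)$, whereas the paper works with the kernel of the induced map between successive graded pieces; the paper records these two descriptions as equivalent, so the arguments are interchangeable.
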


\begin{proof}
Set $v:=\lambda u$.
We note that $p(v)=\lambda^d q(u)$ and $\lambda^d \neq 0$.
Thus, by transfinite induction $p^\alpha(V^v)=q^\alpha(V^u)$ for every ordinal $\alpha$, and by taking the intersection we deduce that
$p^\infty(V^v)=q^\infty(V^u)$.
Moreover, $\Ker p(v)=\Ker q(u)$.

Let $\alpha$ be an ordinal.
The mapping
$$p^\alpha(V^v)/p^{\alpha+1}(V^v) \rightarrow p^{\alpha+1}(V^v)/p^{\alpha+2}(V^v)$$
induced by $p(v)$ is a non-zero scalar multiple of the mapping
$$q^\alpha(V^u)/q^{\alpha+1}(V^u) \rightarrow q^{\alpha+1}(V^u)/q^{\alpha+2}(V^u)$$
induced by $q(u)$. Hence, they have the same kernel, denoted by $W_\alpha$.
Since $p$ and $q$ have the same degree, which we denote by $d$, the dimension of $W_\alpha$ over $\F$ equals both $d\, \kappa_\alpha(v,p)$ and
$d\, \kappa_\alpha(u,q)$, and hence $\kappa_\alpha(v,p)=\kappa_\alpha(u,q)$ because $d$ is a non-zero integer.

The case $\alpha=\infty$ is dealt with likewise.
\end{proof}

\begin{prop}
Let $u \in \GL(V)$ and let $p \in \Irr(\F) \setminus \{t\}$.
Then $\kappa_\alpha(u^{-1},p)=\kappa_\alpha (u,p^\sharp)$ for every $\alpha \in \omega_1 \cup \{\infty\}$.
\end{prop}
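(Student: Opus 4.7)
The plan is to mimic the proof of Proposition \ref{invariantsH} by reducing the computation of $p(u^{-1})$ to that of $p^\sharp(u)$ up to a harmless invertible factor. Denote by $d$ the degree of $p$, and recall that $p(0) \neq 0$ because $p \in \Irr(\F) \setminus \{t\}$, so $p^\sharp$ is well-defined, and it is standard that $p^\sharp$ is again irreducible. The key algebraic identity is
$$p(u^{-1})=p(0)\,u^{-d}\,p^\sharp(u),$$
which I would verify by evaluating $t^d\,p(t^{-1})=p(0)\,p^\sharp(t)$ at $u^{-1}$ and multiplying both sides by $u^d$; the factor $p(0)\,u^{-d}$ is an automorphism of $V$ that commutes with $u$ and $u^{-1}$.

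Next I would establish, by transfinite induction, that
$$p^\alpha(V^{u^{-1}})=(p^\sharp)^\alpha(V^u)$$
for every ordinal $\alpha$, together with the fact that each of these subspaces is $u$-stable. The initial step is trivial; the limit step follows from the definition as an intersection; the successor step uses the identity above, together with the observation that if $W$ is $u$-stable then so is $p^\sharp(u)(W)$, whence $u^{-d}$ fixes $p^\sharp(u)(W)$ setwise and $p(u^{-1})(W)=p^\sharp(u)(W)$. Taking the eventual value gives $p^\infty(V^{u^{-1}})=(p^\sharp)^\infty(V^u)$. I would also record that $\Ker p(u^{-1})=\Ker p^\sharp(u)$, a direct consequence of the identity.

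Finally I would identify the Kaplansky invariants. For any ordinal $\alpha$, the kernel of the map
$$p^\alpha(V^{u^{-1}})/p^{\alpha+1}(V^{u^{-1}}) \longrightarrow p^{\alpha+1}(V^{u^{-1}})/p^{\alpha+2}(V^{u^{-1}})$$
induced by $p(u^{-1})$ is standardly identified with
$$\bigl(\Ker p(u^{-1}) \cap p^\alpha(V^{u^{-1}})\bigr)\big/\bigl(\Ker p(u^{-1}) \cap p^{\alpha+1}(V^{u^{-1}})\bigr),$$
and the previous step shows this equals $\bigl(\Ker p^\sharp(u) \cap (p^\sharp)^\alpha(V^u)\bigr)/\bigl(\Ker p^\sharp(u) \cap (p^\sharp)^{\alpha+1}(V^u)\bigr)$, which is the corresponding kernel for $p^\sharp(u)$ on $V^u$. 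Since $p$ and $p^\sharp$ share the same degree $d$, the $\F$-dimension of this common subspace equals both $d\,\kappa_\alpha(u^{-1},p)$ and $d\,\kappa_\alpha(u,p^\sharp)$, whence equality. The case $\alpha=\infty$ is handled in exactly the same way using $\Ker p(u^{-1}) \cap p^\infty(V^{u^{-1}})=\Ker p^\sharp(u) \cap (p^\sharp)^\infty(V^u)$.

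No step looks like a serious obstacle; the only thing that requires a bit of care is the transfinite induction showing $p^\alpha(V^{u^{-1}})=(p^\sharp)^\alpha(V^u)$, where one must keep track of the $u$-stability of the subspaces in order to absorb the factor $u^{-d}$ appearing in the identity $p(u^{-1})=p(0)\,u^{-d}\,p^\sharp(u)$.
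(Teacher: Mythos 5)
Your approach is essentially the same as the paper's: you use the identity $p(u^{-1})=p(0)\,u^{-d}\,p^\sharp(u)$, run a transfinite induction to show $p^\alpha(V^{u^{-1}})=(p^\sharp)^\alpha(V^u)$, and then match $\F$-dimensions using $\deg p=\deg p^\sharp$. The only divergence is in the last step, where you invoke the alternative characterization of $\kappa_\alpha$ via the quotients $\bigl(\Ker p(u)\cap p^\alpha(V^u)\bigr)/\bigl(\Ker p(u)\cap p^{\alpha+1}(V^u)\bigr)$, whereas the paper instead factors the induced map on quotients as the $q(u)$-induced map composed with an automorphism; both routes are legitimate and lead to the same conclusion.

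There is, however, a genuine imprecision in your successor step that you should repair. You write that if $W$ is $u$-stable then so is $p^\sharp(u)(W)$, ``whence $u^{-d}$ fixes $p^\sharp(u)(W)$ setwise.'' That implication is false in infinite dimension: $u$-stability only gives $u\bigl(p^\sharp(u)(W)\bigr)\subset p^\sharp(u)(W)$, which says nothing about $u^{-1}$ preserving the subspace (think of a shift on a basis indexed by $\Z$, which is invertible yet has proper invariant subspaces it does not map onto themselves). To absorb the factor $u^{-d}$ you really need $u$ to induce an \emph{automorphism} of $(p^\sharp)^\alpha(V^u)$, and this must be carried along as part of the transfinite induction hypothesis. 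This is exactly what the paper does: at successors one uses that $u$ and $p^\sharp(u)$ commute and that $p^\sharp(u)$ maps $(p^\sharp)^\alpha(V^u)$ onto $(p^\sharp)^{\alpha+1}(V^u)$, and at limit stages one uses injectivity of $u$ to pass $u$ through the intersection. Once you strengthen your induction in this way, the rest of your argument goes through.
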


\begin{proof}
Set $v:=u^{-1}$ and $q:=p^\sharp$.
Here $p(v)=p(0)\,q(u)\,u^{-d}=p(0)\,u^{-d}\,q(u)$. By transfinite induction, one shows that for each ordinal
$\alpha$, the subspace $p^\alpha (V^u)$ is stable under $u$ and that $u$ induces an automorphism of it.
Then, by another transfinite induction, one deduces that $p^\alpha (V^v)=q^\alpha(V^u)$ for every ordinal $\alpha$.
Let $\alpha$ be an ordinal.
The mapping
$$p^\alpha(V^v)/p^{\alpha+1}(V^v) \rightarrow p^{\alpha+1}(V^v)/p^{\alpha+2}(V^v)$$
induced by $p(v)$ is the composite of the mapping
$$q^\alpha(V^u)/q^{\alpha+1}(V^u) \rightarrow q^{\alpha+1}(V^u)/q^{\alpha+2}(V^u)$$
induced by $q(u)$ and of the automorphism of $q^{\alpha+1}(V^u)/q^{\alpha+2}(V^u)$ induced by
$p(0) u^{-d}$, and hence they have the same kernel.
Like in the proof of Proposition \ref{invariantsH}, we use the observation that $\deg(q)=\deg(p)$ to conclude that
$\kappa_\alpha(u^{-1},p)=\kappa_\alpha (u,q)$.
The case $\alpha=\infty$ is dealt with similarly.
\end{proof}

Using Kaplansky's theorem, we deduce the following two corollaries:

\begin{cor}\label{simtooppositeCor}
Let $V$ be a vector space with countable dimension, and $u \in \End(V)$ be locally finite.
The following conditions are equivalent:
\begin{enumerate}[(i)]
\item $u$ is similar to $-u$.
\item For every polynomial $p \in \Irr(\F)$,
one has $\forall \alpha \in \omega_1 \cup \{\infty\}, \; \kappa_\alpha\bigl(u,H_{-1}(p)\bigr)=\kappa_\alpha(u,p)$.
\end{enumerate}
\end{cor}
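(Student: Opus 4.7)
The plan is to obtain the corollary as an essentially formal consequence of Kaplansky's classification theorem (Theorem \ref{KaplanskyTheorem}) together with Proposition \ref{invariantsH}, specialized to $\lambda=-1$.

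First I would verify that $-u$ is still a locally finite endomorphism of $V$: since $((-u)^k(x))_{k \geq 0}$ spans the same subspace as $(u^k(x))_{k \geq 0}$ (the $k$-th vectors differ only by a sign), local finiteness of $u$ transfers immediately to $-u$. This allows us to apply Kaplansky's theorem to the pair $(u,-u)$: they are similar if and only if $\kappa_\alpha(u,q)=\kappa_\alpha(-u,q)$ for every $q \in \Irr(\F)$ and every $\alpha \in \omega_1 \cup \{\infty\}$.

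Next I would apply Proposition \ref{invariantsH} with $\lambda=-1$, which gives $\kappa_\alpha(-u,q)=\kappa_\alpha(u,H_{-1}(q))$ for every $q \in \Irr(\F)$ and every $\alpha \in \omega_1 \cup \{\infty\}$. Substituting, $u$ is similar to $-u$ if and only if
$$\forall q \in \Irr(\F),\ \forall \alpha \in \omega_1 \cup \{\infty\},\quad \kappa_\alpha(u,q)=\kappa_\alpha(u,H_{-1}(q)).$$

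To match the statement precisely, I would then observe that the transformation $p \mapsto H_{-1}(p)$ is an involution of $\Irr(\F)$: first, $H_{-1}(p)$ is monic (its leading coefficient is $(-1)^{-d}(-1)^d=1$ where $d=\deg p$), it has the same degree as $p$, and it is irreducible because $H_{-1}(p)(t)=(-1)^d\,p(-t)$ and the change of variable $t \leftrightarrow -t$ preserves irreducibility; moreover, a direct computation shows that $H_{-1}\bigl(H_{-1}(p)\bigr)=p$. Hence, as $q$ ranges over $\Irr(\F)$, so does $H_{-1}(q)$, and one sees that the displayed condition above is equivalent to condition (ii) of the corollary. There is no real obstacle here; the only thing to keep an eye on is the bookkeeping between the two irreducible polynomials $p$ and $H_{-1}(p)$, ensuring that the index sets in conditions (ii) and the Kaplansky criterion genuinely coincide via the involution $H_{-1}$.
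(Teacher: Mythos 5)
Your proposal is correct and follows exactly the route the paper intends: the paper presents this corollary as an immediate consequence of Kaplansky's theorem together with Proposition \ref{invariantsH} (specialized to $\lambda=-1$), which is precisely your argument. (The final remark about $H_{-1}$ being an involution is fine but not strictly needed, since the condition obtained from Kaplansky's theorem is already literally condition (ii) up to renaming the dummy variable.)
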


\begin{cor}\label{simtoinverseCor}
Let $V$ be a vector space with countable dimension, and $u \in \GL(V)$ be locally finite.
The following conditions are equivalent:
\begin{enumerate}[(i)]
\item $u$ is similar to $u^{-1}$.
\item For every polynomial $p \in \Irr(\F) \setminus \{t\}$,
one has $\forall \alpha \in \omega_1 \cup \{\infty\}, \; \kappa_\alpha(u,p^\sharp)=\kappa_\alpha(u,p)$.
\end{enumerate}
\end{cor}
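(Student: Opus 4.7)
The plan is to reduce the statement to a direct application of the preceding proposition together with Kaplansky's classification theorem (Theorem \ref{KaplanskyTheorem}), exactly in parallel with the proof of Corollary \ref{simtooppositeCor}. By Theorem \ref{KaplanskyTheorem}, since $u$ and $u^{-1}$ are both locally finite endomorphisms of the same countable-dimensional space $V$, the similarity $u \sim u^{-1}$ is equivalent to the equalities $\kappa_\alpha(u,p)=\kappa_\alpha(u^{-1},p)$ for every $p \in \Irr(\F)$ and every $\alpha \in \omega_1 \cup \{\infty\}$.

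I would then first discard the case $p=t$. Since $u \in \GL(V)$, the endomorphism $t(u)=u$ is invertible, so $\Ker(t^n(u))=\{0\}$ for all $n$, and likewise for $u^{-1}$; thus $\kappa_\alpha(u,t)=\kappa_\alpha(u^{-1},t)=0$ for all $\alpha$, and the equation relative to $p=t$ is automatic. It remains to handle $p \in \Irr(\F) \setminus \{t\}$. For any such $p$, the previous proposition gives $\kappa_\alpha(u^{-1},p)=\kappa_\alpha(u,p^\sharp)$, so the condition from Kaplansky becomes $\kappa_\alpha(u,p)=\kappa_\alpha(u,p^\sharp)$ for all $p \in \Irr(\F) \setminus \{t\}$ and all $\alpha \in \omega_1 \cup \{\infty\}$, which is exactly (ii).

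There is essentially no obstacle here; the only bookkeeping item is that $p \mapsto p^\sharp$ really maps $\Irr(\F) \setminus \{t\}$ into itself (so that the right-hand sides $\kappa_\alpha(u,p^\sharp)$ are well-defined invariants of the allowed form), which follows from $(p^\sharp)^\sharp = p$ when $p(0) \neq 0$ together with the fact that any proper factorization $p^\sharp = q_1 q_2$ with monic $q_i$ of positive degree transfers, via the sharp operation, into a proper factorization of $p$. Once this is noted, the corollary follows immediately.
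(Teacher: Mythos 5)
Your proof is correct and is exactly the argument the paper intends (the paper simply writes ``Using Kaplansky's theorem, we deduce the following two corollaries'' and leaves the details implicit): combine Theorem \ref{KaplanskyTheorem} with the preceding proposition, observe that the $p=t$ invariants vanish for an invertible locally finite endomorphism, and note that $p \mapsto p^\sharp$ is an involution of $\Irr(\F) \setminus \{t\}$.
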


\subsection{Additional technical results}

Let $p \in \Irr(\F)$, and let $u \in \End(V)$ be $p$-primary.
Let $\alpha$ be an ordinal. Remember that $\kappa_\alpha(u,p)$ is defined as the dimension over $\K:=\F[t]/(p)$
of the kernel of the $\K$-linear map
$$p^{\alpha}(V^u)/p^{\alpha+1}(V^u) \longrightarrow p^{\alpha+1}(V^u)/p^{\alpha+2}(V^u)$$
induced by $p(u)$.

\begin{lemma}\label{sumdimLemma}
Let $p \in \Irr(\F)$, and let $u \in \End(V)$ be $p$-primary. Let $k$ be a positive integer.
Let $\alpha$ be an ordinal. Denote by $d$ the degree of $p$.
Then the kernel of the $\F$-linear map
$$p^{\alpha}(V^u)/p^{\alpha+k}(V^u) \longrightarrow p^{\alpha+1}(V^u)/p^{\alpha+k+1}(V^u)$$
induced by $p(u)$ has its dimension over $\F$ equal to
$$d \sum_{j=0}^{k-1} \kappa_{\alpha+j}(u,p).$$
\end{lemma}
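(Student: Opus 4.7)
The plan is to induct on $k$. Write $W_\beta:=p^\beta(V^u)$ for every ordinal $\beta$, and let $\phi_k:W_\alpha/W_{\alpha+k}\to W_{\alpha+1}/W_{\alpha+k+1}$ denote the map induced by $p(u)$. The base case $k=1$ is essentially the definition: $\ker\phi_1$ is a $\K$-vector space of $\K$-dimension $\kappa_\alpha(u,p)$, where $\K:=\F[t]/(p)$, hence of $\F$-dimension $d\cdot\kappa_\alpha(u,p)$ since $[\K:\F]=d$.

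For the inductive step, I would relate $\phi_k$ to $\phi_{k-1}$ through the truncation projections $\pi:W_\alpha/W_{\alpha+k}\to W_\alpha/W_{\alpha+k-1}$ and $\pi':W_{\alpha+1}/W_{\alpha+k+1}\to W_{\alpha+1}/W_{\alpha+k}$. Since $\pi'\circ\phi_k=\phi_{k-1}\circ\pi$, the projection $\pi$ restricts to a map $\bar\pi:\ker\phi_k\to\ker\phi_{k-1}$. Its kernel is $\ker\phi_k\cap(W_{\alpha+k-1}/W_{\alpha+k})$, which unwinds to $\{y\in W_{\alpha+k-1}:p(u)(y)\in W_{\alpha+k+1}\}/W_{\alpha+k}$; this is precisely the kernel of $\phi_1$ at the shifted ordinal $\alpha+k-1$, and therefore has $\F$-dimension $d\cdot\kappa_{\alpha+k-1}(u,p)$ by the base case.

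The crux is the surjectivity of $\bar\pi$. Given $z\in W_\alpha$ with $p(u)(z)\in W_{\alpha+k}$, one exploits the identity $W_{\alpha+k}=p(u)(W_{\alpha+k-1})$---which is the very definition of the filtration at the successor step $\alpha+k$---to pick $w\in W_{\alpha+k-1}$ with $p(u)(w)=p(u)(z)$. Then $z-w$ has the same image as $z$ modulo $W_{\alpha+k-1}$ and satisfies $p(u)(z-w)=0\in W_{\alpha+k+1}$, so its class in $W_\alpha/W_{\alpha+k}$ lifts the class of $z$ into $\ker\phi_k$. Splicing this into the short exact sequence
\[0\longrightarrow\ker\bar\pi\longrightarrow\ker\phi_k\longrightarrow\ker\phi_{k-1}\longrightarrow 0\]
and invoking the inductive hypothesis $\dim_\F\ker\phi_{k-1}=d\sum_{j=0}^{k-2}\kappa_{\alpha+j}(u,p)$ yields the desired total $d\sum_{j=0}^{k-1}\kappa_{\alpha+j}(u,p)$.

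Only the surjectivity step is non-formal; it relies solely on the successor-step rule $W_{\beta+1}=p(u)(W_\beta)$, so the potentially transfinite nature of $\alpha$ causes no trouble because the offsets $\alpha+k$ and $\alpha+k+1$ appearing in the recursion are always successor ordinals.
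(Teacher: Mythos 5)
Your proof is correct, and although the overall skeleton (induction on $k$ via a short exact sequence of kernels, with one term a ``$k=1$'' kernel and the other an inductive-hypothesis kernel) matches the paper's, the decomposition you chose is genuinely different. The paper filters at the top of the stack: it uses the short exact sequence
$0 \to p^{\alpha+1}(V^u)/p^{\alpha+k+1}(V^u) \to p^{\alpha}(V^u)/p^{\alpha+k+1}(V^u) \to p^{\alpha}(V^u)/p^{\alpha+1}(V^u) \to 0$,
applies the inductive hypothesis to the \emph{left} term at the shifted ordinal $\alpha+1$, and extracts the kernel dimensions via its Exact Sequence Lemma (a corollary of the snake lemma, where the required surjectivity of the leftmost vertical map comes from $p(u)$ carrying $p^{\beta}(V^u)$ onto $p^{\beta+1}(V^u)$). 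You instead filter at the bottom, using
$0 \to W_{\alpha+k-1}/W_{\alpha+k} \to W_\alpha/W_{\alpha+k} \to W_\alpha/W_{\alpha+k-1} \to 0$,
apply the inductive hypothesis to the \emph{right} term at the same ordinal $\alpha$, and verify the exactness of the kernel sequence by hand, with the surjectivity of $\bar\pi$ proved directly from $W_{\alpha+k} = p(u)(W_{\alpha+k-1})$. Your route buys a more elementary argument (no snake lemma needed), while the paper's buys a slightly cleaner bookkeeping since the entire kernel computation is delegated to a single auxiliary lemma; in both proofs the essential input is the same successor-step identity $p^{\beta+1}(V^u)=p(u)\bigl(p^{\beta}(V^u)\bigr)$.
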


To prove this result, we will use the following basic result:

\begin{lemma}[Exact sequence lemma]\label{ExactSequenceLemma}
In the category of finite-dimensional vector spaces over $\F$, let
$$\xymatrix{
\{0\} \ar[r] & A_1 \ar@{^{(}->}[r]^{u_1} \ar@{->>}[d]_a & B_1 \ar@{->>}[r]^{v_1} \ar[d]^b & C_1 \ar[r] \ar[d]^{c} & \{0\} \\
\{0\} \ar[r] & A_2 \ar@{^{(}->}[r]_{u_2} & B_2 \ar@{->>}[r]_{v_2}  & C_2 \ar[r] & \{0\}
}$$
be a commutative diagram in which the horizontal sequences are exact
and the vertical morphism $a$ is surjective. Then
$$\dim \Ker b=\dim \Ker a+\dim \Ker c.$$
\end{lemma}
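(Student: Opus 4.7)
The plan is to prove this as a direct consequence of the snake lemma, but since the hypotheses are minimal and the conclusion concerns only kernel dimensions, I would bypass the full snake machinery and construct a short exact sequence
$$0 \longrightarrow \Ker a \longrightarrow \Ker b \longrightarrow \Ker c \longrightarrow 0$$
by hand; the dimension formula then follows from additivity of dimension along short exact sequences of finite-dimensional spaces.

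First I would show that $v_1$ restricts to a well-defined linear map $\overline{v_1} : \Ker b \to \Ker c$. For $x \in \Ker b$, commutativity of the right square gives $c(v_1(x)) = v_2(b(x)) = 0$, so $v_1(x) \in \Ker c$. Next, I would identify the kernel of $\overline{v_1}$. An element of this kernel is an $x \in \Ker b$ with $v_1(x) = 0$, hence, by exactness of the top row, of the form $x = u_1(y)$ for a unique $y \in A_1$. The condition $b(x) = 0$ translates via the left square into $u_2(a(y)) = 0$, and since $u_2$ is injective this forces $a(y) = 0$. Thus $\Ker \overline{v_1} = u_1(\Ker a)$, and since $u_1$ is injective this kernel is isomorphic to $\Ker a$.

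The main (and only nontrivial) step is surjectivity of $\overline{v_1}$, and this is exactly where the hypothesis that $a$ is surjective must be used. Given $z \in \Ker c$, I would pick any preimage $x_0 \in B_1$ under $v_1$ (possible since $v_1$ is surjective). Commutativity yields $v_2(b(x_0)) = c(z) = 0$, so $b(x_0) \in \Ker v_2 = \im u_2$ and I can write $b(x_0) = u_2(y_2)$ for some $y_2 \in A_2$. Now I use the surjectivity of $a$ to lift $y_2$ to some $y_1 \in A_1$ with $a(y_1) = y_2$, and then set $x := x_0 - u_1(y_1)$. Commutativity of the left square gives $b(u_1(y_1)) = u_2(a(y_1)) = u_2(y_2) = b(x_0)$, so $b(x) = 0$; and $v_1(u_1(y_1)) = 0$ by exactness, so $\overline{v_1}(x) = v_1(x_0) = z$.

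Combining these three facts produces the desired short exact sequence, and applying the rank--nullity theorem (or equivalently, the standard fact that dimension is additive on short exact sequences of finite-dimensional spaces) yields $\dim \Ker b = \dim \Ker a + \dim \Ker c$. The only conceptual point is to recognize that the surjectivity of $a$ is what kills the would-be connecting map $\Ker c \to \mathrm{Coker}\,a$ of the snake lemma; everything else is diagram chasing.
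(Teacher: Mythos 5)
Your proof is correct and arrives at the same short exact sequence $0 \to \Ker a \to \Ker b \to \Ker c \to 0$ that the paper uses, but by a different route. The paper simply cites the snake lemma (Lemma 9.1, chapter III of Lang's \emph{Algebra}) to produce the six-term exact sequence $0 \to \Ker a \to \Ker b \to \Ker c \to \mathrm{Coker}\, a \to \cdots$, and then observes that the surjectivity of $a$ makes $\mathrm{Coker}\, a = 0$, truncating the sequence; the dimension formula is then immediate. You instead carry out the diagram chase by hand: you verify directly that $v_1$ restricts to a map on kernels, identify its kernel with $\Ker a$ via the injection $u_1$, and establish surjectivity by lifting through $a$ --- which, as you correctly point out, is precisely the step that in the full snake lemma would show the connecting map $\Ker c \to \mathrm{Coker}\, a$ is zero. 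Your argument is more self-contained and elementary (it does not require the reader to recall the statement or proof of the snake lemma), at the modest cost of a couple of extra paragraphs; the paper's version is shorter but offloads the work to a cited reference. Both are valid, and you have correctly located where the hypothesis on $a$ enters.
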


\begin{proof}
This is a special case of the snake lemma (Lemma 9.1, chapter III of \cite{Lang}).
By this lemma, there is an exact sequence
$$\{0\} \rightarrow \Ker a \overset{u_0}{\rightarrow} \Ker b \overset{v_0}{\rightarrow} \Ker c \rightarrow
A_2/\im a \rightarrow B_2/\im b \rightarrow C_2/\im c \rightarrow \{0\}$$
in which $u_0$ and $v_0$ are induced by $u_1$ and $v_1$, respectively.
Since $a$ is surjective, this yields an exact sequence
$$\xymatrix{
\{0\} \ar[r] & \Ker a \ar@{^{(}->}[r]^{u_0}  & \Ker b \ar@{->>}[r]^{v_0} & \Ker c \ar[r] & \{0\}
}.$$
From this observation, the result follows immediately.
\end{proof}

\begin{proof}[Proof of Lemma \ref{sumdimLemma}]
We prove the result by induction on $k$. The case $k=1$ is obvious.
Assume that $k>1$. Then we apply Lemma \ref{ExactSequenceLemma} to the commutative diagram
$$\xymatrix{
p^{\alpha+1}(V^u)/p^{\alpha+k+1}(V^u) \ar@{^{(}->}[r] \ar[d]^a &
p^{\alpha}(V^u)/p^{\alpha+k+1}(V^u)  \ar@{->>}[r] \ar[d]^b &
p^{\alpha}(V^u)/p^{\alpha+1}(V^u)  \ar[d]^c  \\
p^{\alpha+2}(V^u)/p^{\alpha+k+2}(V^u)
 \ar@{^{(}->}[r] &
 p^{\alpha+1}(V^u)/p^{\alpha+k+2}(V^u)
  \ar@{->>}[r]  &
  p^{\alpha+1}(V^u)/p^{\alpha+2}(V^u)
   }$$
in which the horizontal morphisms are the canonical ones, and the vertical morphisms are all induced by $p(u)$.
However $\dim_\F \Ker c=d\,\kappa_\alpha(u,p)$, and by induction $\dim_\F \Ker a=d \underset{j=0}{\overset{k-1}{\sum}} \kappa_{(\alpha+1)+j}(u,p)$.
Hence,
$$\dim_\F \Ker b=d \underset{j=0}{\overset{k}{\sum}} \kappa_{\alpha+j}(u,p).$$
\end{proof}

Remember that the dimension of the kernel of the composite of two surjective linear maps equals the sum of the dimensions
of the kernels of the said maps. By induction, this leads to the following result:

\begin{lemma}\label{supersumdimLemma}
Let $p \in \Irr(\F)$, and let $u \in \End(V)$ be $p$-primary. Let $k$ and $l$ be positive integers.
Let $\alpha$ be an ordinal. Denote by $d$ the degree of $p$.
Then the kernel of the $\F$-linear map
$$p^{\alpha}(V^u)/p^{\alpha+k}(V^u) \longrightarrow p^{\alpha+l}(V^u)/p^{\alpha+k+l}(V^u)$$
induced by $p(u)^l$ has its dimension equal to
$$d \sum_{i=0}^{l-1} \sum_{j=0}^{k-1} \kappa_{\alpha+i+j}(u,p).$$
\end{lemma}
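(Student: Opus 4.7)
The plan is to reduce the lemma to an iteration of Lemma \ref{sumdimLemma} by factoring $p(u)^l$ as a composite of $l$ copies of $p(u)$ acting between consecutive pairs of quotients of the filtration $\bigl(p^{\alpha+i}(V^u)/p^{\alpha+i+k}(V^u)\bigr)_{0 \leq i \leq l}$. The first step is to observe that for every ordinal $\beta$, one has $p(u)\bigl(p^\beta(V^u)\bigr)=p^{\beta+1}(V^u)$ directly from the definition of the filtration at a successor ordinal; consequently, for all $i \in \N$, the map
$$\varphi_i : p^{\alpha+i}(V^u)/p^{\alpha+i+k}(V^u) \longrightarrow p^{\alpha+i+1}(V^u)/p^{\alpha+i+k+1}(V^u)$$
induced by $p(u)$ is well defined and \emph{surjective}, and the map of the statement equals $\varphi_{l-1} \circ \cdots \circ \varphi_1 \circ \varphi_0$.

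Next, I would invoke the reminder quoted just before the lemma: the dimension of the kernel of a composite of surjective linear maps is the sum of the dimensions of the kernels of the individual maps (which follows from the short exact sequence $0 \to \Ker f \to \Ker (g \circ f) \to \Ker g \to 0$ for $f$ surjective, applied inductively). Combined with Lemma \ref{sumdimLemma}, which gives
$$\dim_\F \Ker \varphi_i = d \sum_{j=0}^{k-1} \kappa_{\alpha+i+j}(u,p),$$
one gets
$$\dim_\F \Ker(\varphi_{l-1} \circ \cdots \circ \varphi_0) = \sum_{i=0}^{l-1} \dim_\F \Ker \varphi_i = d \sum_{i=0}^{l-1} \sum_{j=0}^{k-1} \kappa_{\alpha+i+j}(u,p),$$
which is exactly the claimed formula. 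The induction on $l$ is trivial once the surjectivity of each $\varphi_i$ is in hand.

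The only real technical point, and the one that must be verified carefully, is the surjectivity of each intermediate map $\varphi_i$; everything else is bookkeeping. This surjectivity is just the defining relation $p^{\beta+1}(V^u)=p(u)\bigl(p^\beta(V^u)\bigr)$ at successor ordinals, which is built into the transfinite definition of the filtration, so no special argument is needed. Note also that the kernels here are finite-dimensional over $\F$ (as the relevant Kaplansky invariants are) so the short exact sequence argument applies without set-theoretic subtlety.
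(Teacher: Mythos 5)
Your proof follows exactly the paper's approach: factor the induced map as a composite of $l$ surjective maps $\varphi_i$ arising from $p(u)$, compute $\dim_\F \Ker \varphi_i$ by Lemma \ref{sumdimLemma}, and use additivity of kernel dimensions for composites of surjective maps. One small remark: your final aside is incorrect as stated, since the Kaplansky invariants (and hence the relevant kernels) can be countably infinite; but this does not matter, because the short-exact-sequence additivity $\dim \Ker(g\circ f)=\dim\Ker f+\dim\Ker g$ for $f$ surjective holds in cardinal arithmetic with no finiteness hypothesis, so the argument stands.
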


\section{A review of basic results on quadratic elements}\label{quadraticreviewSection}

Let $p \in \F[t]$ be monic with degree $2$. The \textbf{trace} of $p$, denoted by $\tr(p)$,
is defined as the opposite of the coefficient of $p$ on $t$
(it is the sum of the roots of $p$, counted with multiplicities). Let $\calA$ be an $\F$-algebra, and
let $a \in \calA$ satisfy $p(a)=0$. Then
$$a\,\bigl(\tr(p)\, 1_\calA-a\bigr)=\bigl(\tr(p)\, 1_\calA-a\bigr)\,a=p(0)\,1_\calA,$$
and we set
$$a^\star:=\tr(p)\, 1_\calA-a,$$
called the \textbf{$p$-conjugate of $a$.} In particular, if $p(0) \neq 0$ then $a$ is invertible and
$a^\star=p(0)\,a^{-1}$.

\begin{lemma}[Basic commutation lemma, lemma 1.1 of \cite{dSPsumprod2}]\label{basicCommutationlemma}
Let $\calA$ be an $\F$-algebra, and let $p,q$ be monic polynomials with degree $2$ in $\F[t]$.
Let $(a,b) \in \calA^2$ satisfy $p(a)=q(b)=0$,
and denote by $a^\star$ and $b^\star$ the $p$-conjugate of $a$ and the $q$-conjugate of $b$, respectively.
Then both $a$ and $b$ commute with $ab^\star+b a^\star$.
\end{lemma}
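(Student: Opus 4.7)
The plan is to reduce the statement to a short computation by rewriting $a^\star$ and $b^\star$ in terms of the traces. Using the identity $a+a^\star=\tr(p)\,1_\calA$ and the analogous one for $b$, I would first observe that
$$ab^\star+ba^\star=\tr(q)\,a+\tr(p)\,b-(ab+ba),$$
so it suffices to show that the element $X:=\tr(q)\,a+\tr(p)\,b-(ab+ba)$ commutes with both $a$ and $b$.

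For the commutation with $a$, I would compute $aX-Xa$ by pure expansion. The terms $\tr(q)\,a^2$ and $\tr(p)(ab-ba)$ split nicely, the term $\tr(q)\,a$ commutes with $a$, and the only non-trivial piece that remains is
$$aX-Xa=\tr(p)(ab-ba)-(a^2 b-b a^2).$$
At this step I would invoke the relation $a^2=\tr(p)\,a-p(0)\,1_\calA$ coming from $p(a)=0$; substituting into $a^2 b-ba^2$, the $p(0)$-terms cancel and the expression collapses to $\tr(p)(ab-ba)$. Thus $aX-Xa=0$.

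The commutation with $b$ follows by the very same argument with the roles of $(a,p)$ and $(b,q)$ swapped, since the expression $X$ is symmetric in those data up to transposing the positions of $a$ and $b$ in $ab+ba$ (which is itself symmetric). There is no genuine obstacle here: the entire content of the lemma lies in the algebraic identity $a^2=\tr(p)\,a-p(0)\,1_\calA$, and the computation above is the direct verification that this identity, combined with the symmetric one for $b$, forces $ab^\star+ba^\star$ into the centralizer of $\{a,b\}$.
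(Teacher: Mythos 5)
Your computation is correct and complete: the reduction $ab^\star+ba^\star=\tr(q)\,a+\tr(p)\,b-(ab+ba)$ is accurate, the identity $aX-Xa=\tr(p)(ab-ba)-(a^2b-ba^2)$ holds by direct expansion, substituting $a^2=\tr(p)\,a-p(0)\,1_\calA$ makes this vanish, and the symmetry of $X$ under the swap $(a,p)\leftrightarrow(b,q)$ legitimately dispatches the commutation with $b$. Note that the paper itself does not prove this lemma but imports it from \cite{dSPsumprod2}; in that reference the argument is essentially this same elementary verification, so your proposal is faithful to the intended proof.
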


\begin{cor}\label{commutationCor1}
Let $a$ and $b$ be square-zero elements of an $\F$-algebra. Set $u:=a+b$. Then $a$ and $b$ commute with $u^2$.
\end{cor}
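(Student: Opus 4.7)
The plan is to derive this directly from the Basic Commutation Lemma \ref{basicCommutationlemma} applied with $p=q=t^2$. With this choice, both polynomials are monic of degree $2$ with trace $0$, so the $p$-conjugate of $a$ is $a^\star=-a$ and the $q$-conjugate of $b$ is $b^\star=-b$.

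First I would expand $u^2$: since $a^2=b^2=0$, one has
$$u^2=(a+b)^2=a^2+ab+ba+b^2=ab+ba.$$
Next, the Basic Commutation Lemma guarantees that both $a$ and $b$ commute with
$$ab^\star+ba^\star=-ab-ba=-u^2.$$
Multiplying by $-1$ preserves the commutation relation, so $a$ and $b$ each commute with $u^2$, which is exactly what is claimed.

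There is no real obstacle here; the statement is essentially a one-line specialization of Lemma \ref{basicCommutationlemma} to the square-zero case, combined with the trivial observation that $u^2$ reduces to the anticommutator $ab+ba$ when $a$ and $b$ square to zero.
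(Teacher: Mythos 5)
Your proof is correct and follows exactly the same route as the paper: specialize Lemma \ref{basicCommutationlemma} to $p=q=t^2$, note $a^\star=-a$, $b^\star=-b$, and observe that $ab^\star+ba^\star=\pm(ab+ba)=\pm u^2$. Your write-up is marginally more careful about the sign, but it is the same argument.
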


\begin{proof}
Indeed, here we take $p=q=t^2$ and we have $a^\star=-a$ and $b^\star=-b$, so $a$ and $b$ commute with $ab+ba=u^2-a^2-b^2=u^2$.
\end{proof}

\begin{cor}\label{commutationCor2}
Let $a$ and $b$ be elements of an $\F$-algebra $\calA$ such that $(a-1_\calA)^2=0=(b-1_\calA)^2$. Set $u:=ab$.
Then both $a$ and $b$ commute with $u+u^{-1}$.
\end{cor}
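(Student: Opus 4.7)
The plan is to derive the result from the Basic Commutation Lemma (Lemma~\ref{basicCommutationlemma}), in the same spirit as the proof of Corollary~\ref{commutationCor1}. I would apply that lemma with the polynomials $p=q=(t-1)^2$, which annihilate $a$ and $b$ respectively. For this choice, $\tr(p)=2$ and $p(0)=1$, so the $p$-conjugates are $a^\star=2\cdot 1_\calA-a$ and $b^\star=2\cdot 1_\calA-b$. The identities $a\,a^\star=p(0)\,1_\calA=1_\calA$ and $b\,b^\star=1_\calA$ then show that $a$ and $b$ are invertible with $a^\star=a^{-1}$ and $b^\star=b^{-1}$. The lemma therefore yields, for free, that both $a$ and $b$ commute with
$$c:=ab^\star+b a^\star=ab^{-1}+ba^{-1}.$$

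The only remaining task is to relate $c$ to the element we actually care about, namely $u+u^{-1}=ab+b^{-1}a^{-1}$. I would expand both expressions using $a^{-1}=2\cdot 1_\calA-a$ and $b^{-1}=2\cdot 1_\calA-b$; a direct distribution gives
$$c=2(a+b)-(ab+ba) \qquad\text{and}\qquad u+u^{-1}=(ab+ba)-2(a+b)+4\cdot 1_\calA,$$
so that $u+u^{-1}+c=4\cdot 1_\calA$. Since $1_\calA$ is central, $a$ and $b$ commute with $u+u^{-1}=4\cdot 1_\calA-c$, which is exactly what we want.

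I do not expect a real obstacle: once the Basic Commutation Lemma is invoked with the right polynomials, everything reduces to the single algebraic identity $u+u^{-1}+c=4\cdot 1_\calA$, which is a one-line expansion. The conceptual reason it works is that $(t-1)^2$ is, up to the factor $t^2$, palindromic, so the "natural" symmetric pairings $ab+b^{-1}a^{-1}$ and $ab^{-1}+ba^{-1}$ differ only by a central shift.
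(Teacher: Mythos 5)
Your proof is correct, and it differs from the paper's in one small but genuine way. The paper applies the Basic Commutation Lemma not to the pair $(a,b)$ but to the pair $(a,b^{-1})$: one first observes that $(b^{-1}-1_\calA)^2=0$, so $b^{-1}$ is also annihilated by $(t-1)^2$, and then $a^\star=a^{-1}$ and $(b^{-1})^\star=(b^{-1})^{-1}=b$, so the lemma gives directly that $a$ and $b^{-1}$ commute with $a\,(b^{-1})^\star+b^{-1}\,a^\star=ab+b^{-1}a^{-1}=u+u^{-1}$, whence so does $b$. You instead apply the lemma to $(a,b)$, obtaining commutation with $c=ab^{-1}+ba^{-1}$, and then relate $c$ to $u+u^{-1}$ by the explicit identity $u+u^{-1}+c=4\cdot 1_\calA$, which I have checked and which is correct. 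The paper's substitution trick lands exactly on the target element with no further computation, while your route costs one extra (easy) expansion but requires no preliminary remark about $b^{-1}$ being unipotent of index $2$. Both are sound; the paper's is marginally slicker and is the pattern it reuses elsewhere (e.g.\ in the proof of Lemma~\ref{Llemma3}), but your variant is a perfectly good alternative.
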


\begin{proof}
Here, we take $p=q=(t-1)^2=t^2-2t+1$ and we note that $(b^{-1}-1_\calA)^2=0$, and
that $b=(b^{-1})^{-1}=(b^{-1})^\star$ and $a^{-1}=a^\star$. Hence,
$a$ and $b^{-1}$ commute with $ab+b^{-1}a^{-1}=u+u^{-1}$, which yields the claimed result.
\end{proof}

\section{Block operators}\label{blockoperatorsSection}

Let $V$ be a vector space. We define the canonical injections
$$i_1 : x \in V \mapsto (x,0) \in V^2 \quad \text{and} \quad  i_2 : x \in V \mapsto (0,x) \in V^2$$
and the canonical projections
$$\pi_1 : (x,y) \in V^2 \mapsto x \in V \quad \text{and} \quad  \pi_2 : (x,y) \in V^2 \mapsto y \in V.$$
The mapping
$$M : \begin{cases}
\End(V^2) & \longrightarrow \Mat_2(\End(V)) \\
u & \longmapsto \begin{bmatrix}
\pi_1\circ  u\circ  i_1 & \pi_1\circ u \circ i_2 \\
\pi_2\circ u \circ i_1 & \pi_2\circ u \circ i_2
\end{bmatrix}
\end{cases}$$
is easily shown to be an isomorphism of $\F$-algebras.

Let $a \in \End(V)$. We denote by $K(a)$ the endomorphism of $V^2$ such that
$$M(K(a))=\begin{bmatrix}
0 & a \\
\id_V & 0
\end{bmatrix},$$
and by $L(a)$ the endomorphism of $V^2$ such that
$$M(L(a))=\begin{bmatrix}
0 & -\id_V \\
\id_V & a
\end{bmatrix}.$$

\begin{lemma}\label{exchangediagonalLemma}
Assume that $\car(\F) \neq 2$.
Let $a \in \End(V)$, and set $b \in \End(V^2)$ such that $M(b)=\begin{bmatrix}
a & 0 \\
0 & -a
\end{bmatrix}$.
Then $b$ has the exchange property.
\end{lemma}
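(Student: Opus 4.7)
The plan is to exhibit an explicit splitting $V^2 = V_1 \oplus V_2$ swapped by $b$, using the standard diagonal/antidiagonal decomposition that is available whenever $2$ is invertible in $\F$. Concretely, I would set
$$V_1:=\{(x,x) : x \in V\} \quad \text{and} \quad V_2:=\{(x,-x) : x \in V\}.$$

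First I would check that $V_1$ and $V_2$ are complementary in $V^2$. Because $\car(\F) \neq 2$, any pair $(y,z) \in V^2$ can be written as
$$(y,z)=\Bigl(\tfrac{y+z}{2},\tfrac{y+z}{2}\Bigr)+\Bigl(\tfrac{y-z}{2},-\tfrac{y-z}{2}\Bigr),$$
which gives $V^2 = V_1 + V_2$. For the intersection, an element of $V_1 \cap V_2$ has the form $(x,x)=(x',-x')$, forcing $x=x'$ and $x=-x'$, hence $2x=0$ and thus $x=0$. So $V^2 = V_1 \oplus V_2$.

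Next I would verify that $b$ swaps these two summands. By the definition of $b$ via $M(b)=\bigl[\begin{smallmatrix} a & 0 \\ 0 & -a\end{smallmatrix}\bigr]$, we have $b(x,x)=(a(x),-a(x)) \in V_2$ and $b(x,-x)=(a(x),a(x)) \in V_1$. Hence $b(V_1) \subset V_2$ and $b(V_2) \subset V_1$, which is exactly the exchange property.

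There is no real obstacle here: once one notices that the natural $(+1)$- and $(-1)$-eigenspaces of the involution $(x,y) \mapsto (y,x)$ are complementary (which uses $\car(\F) \neq 2$) and that these eigenspaces are anti-invariant under $\mathrm{diag}(a,-a)$, the result is immediate. The hypothesis on the characteristic enters only at the step where one inverts $2$ to produce the complementary splitting; it cannot be dispensed with, since in characteristic $2$ one has $V_1 = V_2$.
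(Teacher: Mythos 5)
Your proof is correct and uses exactly the same splitting $V^2 = V_1 \oplus V_2$ with $V_1=\{(x,x)\}$ and $V_2=\{(x,-x)\}$ as the paper, just with the routine verifications spelled out in more detail. No discrepancy.
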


\begin{proof}
Set $W_1:=\bigl\{(x,x) \mid x \in V\bigr\}$ and $W_2:=\bigl\{(x,-x) \mid x \in V\bigr\}$.
As $\car(\F) \neq 2$, one sees that $V^2=W_1 \oplus W_2$. Noting that $b : (x,y) \mapsto \bigl(a(x),-a(y)\bigr)$, one
checks that $b(W_1) \subset W_2$ and $b(W_2) \subset W_1$.
\end{proof}

\begin{lemma}\label{inversediagonalLemma}
Let $a$ be an element of an $\F$-algebra $\calA$.
Then the matrix $B:=\begin{bmatrix}
a & 0 \\
0 & a^{-1}
\end{bmatrix}$ is the product of two involutions in $\Mat_2(\calA)$.
Moreover, if $a^2-1_\calA$ is invertible, then $B$ is also the product of two unipotent elements of index $2$ in
$\Mat_2(\calA)$.
\end{lemma}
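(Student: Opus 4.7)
For the first assertion, my plan is the direct explicit factorization
$$B = \begin{bmatrix} 0 & a \\ a^{-1} & 0 \end{bmatrix} \begin{bmatrix} 0 & 1_\calA \\ 1_\calA & 0 \end{bmatrix},$$
in which both factors on the right-hand side visibly square to $I_2$ and are therefore involutions of $\Mat_2(\calA)$.

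For the second assertion, I would first note that the invertibility of $a^2 - 1_\calA = (a-1_\calA)(a+1_\calA)$, together with the commutation of the two factors on the right, implies that $a+1_\calA$ is itself invertible (with inverse $(a-1_\calA)(a^2 - 1_\calA)^{-1}$). In particular $a$, $a^{-1}$ and $(a+1_\calA)^{-1}$ all lie in a commutative unital subalgebra $\calA_0$ of $\calA$, so the whole argument can be carried out inside $\Mat_2(\calA_0)$, where the Cayley--Hamilton identity makes the condition ``$\tr(N) = 0$ and $\det(N) = 0$'' sufficient to conclude that a $2 \times 2$ matrix $N$ squares to zero. I then set $c := (1_\calA - a)(1_\calA + a)^{-1}$ and define
$$u_1 := \begin{bmatrix} 1_\calA - c & 1_\calA \\ -c^2 & 1_\calA + c \end{bmatrix}, \qquad u_2 := u_1^{-1}\, B.$$
The matrix $u_1 - I_2$ has zero trace and zero determinant, so its square vanishes, $u_1$ is unipotent of index $2$, and $u_1^{-1} = 2\,I_2 - u_1$. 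A direct computation of $u_2 - I_2$ shows that its trace equals $(1_\calA+c)\,a + (1_\calA-c)\,a^{-1} - 2_\calA$, which vanishes precisely because of the defining relation $c(a^2 - 1_\calA) = -(a-1_\calA)^2$ for $c$; using the same relation, the determinant of $u_2 - I_2$ also collapses to $0$, so $(u_2 - I_2)^2 = 0$ and $B = u_1\, u_2$ is the required decomposition.

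The only nontrivial step is the choice of $c$, which is forced by the requirement that $\tr(u_2 - I_2) = 0$; once this is guessed, the rest is routine verification. It is worth noting that the formulas also remain valid in characteristic $2$ (there $c = 1_\calA$ and the two factors turn out to be involutions), consistently with the fact that in that setting ``involution'' and ``unipotent of index $2$'' coincide.
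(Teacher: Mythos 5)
Your proof is correct. For the first assertion, your explicit factorization is essentially identical to the paper's, which writes $B$ as the product of the swap matrix $\begin{bmatrix}0 & 1_\calA \\ 1_\calA & 0\end{bmatrix}$ with $\begin{bmatrix}0 & a^{-1}\\ a & 0\end{bmatrix}$; you simply reverse the order, which is immaterial. For the second assertion, your argument is genuinely different from the paper's. The paper conjugates $B$ by
$$T:=\begin{bmatrix}1_\calA & a-1_\calA \\ 1_\calA & a^{-1}-1_\calA\end{bmatrix}$$
(whose invertibility is exactly where the hypothesis on $a^2-1_\calA$ is used) and observes that $T^{-1}BT$ factors visibly as a lower unitriangular matrix times an upper unitriangular one, each of which is automatically unipotent of index $2$. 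You instead pick a candidate first factor $u_1$ built from the Cayley transform $c=(1_\calA-a)(1_\calA+a)^{-1}$ (whose existence is where you use the hypothesis) and verify, via Cayley--Hamilton inside the commutative unital subalgebra generated by $a$, $a^{-1}$ and $(1_\calA+a)^{-1}$, that both $u_1-I_2$ and $u_1^{-1}B-I_2$ have vanishing trace and determinant, hence square to zero. Both routes are correct and comparably explicit: the paper's buys manifest unipotence of the factors (they come out triangular, so no trace or determinant computation is needed), while yours localizes the role of the hypothesis in the single formula for $c$ and avoids guessing a conjugating matrix. Your closing observation that in characteristic $2$ one has $c=1_\calA$ and the two unipotent factors collapse to the involutions of the first part is a pleasant consistency check.
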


\begin{proof}
Firstly, one sees that
$$\begin{bmatrix}
a & 0 \\
0 & a^{-1}
\end{bmatrix}=\begin{bmatrix}
0 & 1_\calA \\
1_\calA & 0
\end{bmatrix} \begin{bmatrix}
0 & a^{-1} \\
a & 0
\end{bmatrix},$$
which proves the first result.

Next, assume that $a^2-1_\calA$ is invertible.
Setting $$T:=\begin{bmatrix}
1_\calA & a-1_\calA \\
1_\calA & a^{-1}-1_\calA
\end{bmatrix},$$
one computes that $T$ is invertible, that
$$T^{-1}=(a^{-1}-a)^{-1} \begin{bmatrix}
a^{-1}-1_\calA & -(a-1_\calA) \\
-1_\calA & 1_\calA
\end{bmatrix}$$
and that
$$T^{-1}BT= \begin{bmatrix}
1_\calA & a+a^{-1}-2.1_\calA \\
1_\calA & a+a^{-1}-1_\calA
\end{bmatrix}.$$
Hence,
$$T^{-1}BT= \begin{bmatrix}
1_\calA & 0 \\
1_\calA & 1_\calA
\end{bmatrix} \,\begin{bmatrix}
1_\calA & a+a^{-1}-2.1_\calA \\
0 & 1_\calA
\end{bmatrix},$$
which is the product of two unipotent elements of index
 $2$. We conclude that so is $B$.
\end{proof}

\section{Sums of two square-zero endomorphisms}\label{2squarezeroSection}

\subsection{Main results}

Our aim here is to characterize the sums of two square-zero endomorphisms among the locally finite endomorphisms
of a vector space with countable dimension. The main goal is to prove Theorem \ref{2squarezeroinfinitetheo},
in which the implications (ii) $\Rightarrow$ (i) and (ii) $\Rightarrow$ (iii) are already known.
Combining this theorem with Corollary \ref{simtooppositeCor} will yield the following characterization over fields
with characteristic different from $2$:

\begin{cor}\label{sum2carnot2cor}
Assume that $\car(\F) \neq 2$. Let $V$ be a vector space with countable dimension, and $u \in \End(V)$ be locally finite.
The following conditions are equivalent:
\begin{enumerate}[(i)]
\item $u$ is the sum of two square-zero endomorphisms of $V$.
\item $u$ has the exchange property.
\item $u$ is similar to its opposite.
\item For every polynomial $p \in \Irr(\F)$,
one has $\forall \alpha \in \omega_1 \cup \{\infty\}, \; \kappa_\alpha\bigl(u,H_{-1}(p)\bigr)=\kappa_\alpha(u,p)$.
\end{enumerate}
\end{cor}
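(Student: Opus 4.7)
The statement is an immediate corollary, so the plan is very short: it consists in splicing together two results that are already stated in the excerpt, namely Theorem \ref{2squarezeroinfinitetheo} and Corollary \ref{simtooppositeCor}. There is no real technical obstacle here, since all the genuine work has been absorbed into those two prior results; the only thing to check is that the hypotheses match and that the characteristic assumption is used in the right place.

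First I would note that the assumption that $V$ has countable dimension and that $u$ is locally finite is exactly the common hypothesis of both Theorem \ref{2squarezeroinfinitetheo} and Corollary \ref{simtooppositeCor}, so both results are applicable here. I would then invoke Theorem \ref{2squarezeroinfinitetheo} to obtain the equivalences (i) $\Leftrightarrow$ (ii) and, using the supplementary assumption $\car(\F) \neq 2$, the equivalence (ii) $\Leftrightarrow$ (iii). This already gives the triangle (i) $\Leftrightarrow$ (ii) $\Leftrightarrow$ (iii).

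Next I would invoke Corollary \ref{simtooppositeCor}, which applies to any locally finite endomorphism of a countable-dimensional space regardless of the characteristic, to obtain the equivalence (iii) $\Leftrightarrow$ (iv): $u$ is similar to $-u$ if and only if for every $p \in \Irr(\F)$ one has $\kappa_\alpha(u,H_{-1}(p)) = \kappa_\alpha(u,p)$ for all $\alpha \in \omega_1 \cup \{\infty\}$.

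Combining these two chains of equivalences gives (i) $\Leftrightarrow$ (ii) $\Leftrightarrow$ (iii) $\Leftrightarrow$ (iv), which is the conclusion. The only subtle point worth mentioning is that the hypothesis $\car(\F) \neq 2$ is required to link (iii) to the other conditions (the equivalences (i) $\Leftrightarrow$ (ii) and (iii) $\Leftrightarrow$ (iv) hold without it), but since it is part of the corollary's hypotheses this costs nothing. No new argument is needed beyond citing these two results.
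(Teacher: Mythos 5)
Your proposal is correct and is exactly the paper's own argument: the corollary is obtained by combining Theorem \ref{2squarezeroinfinitetheo} (giving (i) $\Leftrightarrow$ (ii) $\Leftrightarrow$ (iii) under $\car(\F) \neq 2$) with Corollary \ref{simtooppositeCor} (giving (iii) $\Leftrightarrow$ (iv)). Nothing further is needed.
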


Over fields with characteristic $2$, conditions (iii) and (iv) in this corollary are always satisfied (and trivially so!),
and hence it is also interesting to characterize sums of two square-zero endomorphisms in terms of Kaplansky invariants.

Remember that every ordinal $\alpha$ has a unique expression as the sum $\delta+k$
where $\delta$ is an ordinal with no predecessor and $k \in \N$. We say that $\alpha$ is \textbf{even}
when $k$ is even, and \textbf{odd} otherwise. Here is our result for fields with characteristic $2$:

\begin{theo}\label{sum2car2theo}
Let $V$ be a vector space with countable dimension. Assume that $\car(\F)=2$.
Let $u \in \End(V)$ be locally finite. Then the following conditions are equivalent:
\begin{enumerate}[(i)]
\item $u$ is the sum of two square-zero endomorphisms of $V$.
\item $u$ has the exchange property.
\item For every non-even polynomial $p\in \Irr(\F) \setminus \{t\}$ and for every even ordinal $\alpha$, one has $\kappa_\alpha(u,p)=0$.
\end{enumerate}
\end{theo}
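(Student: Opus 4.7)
Since Theorem~\ref{2squarezeroinfinitetheo} already gives (i)~$\Leftrightarrow$~(ii) in every characteristic, my task reduces to establishing (ii)~$\Leftrightarrow$~(iii) when $\car(\F) = 2$. First I would reduce to a $p$-primary component via the decomposition $V = \bigoplus_p V_{p^\infty}$: the exchange property is compatible with direct sums by Remark~\ref{exchangedirectsumRem}, and the Kaplansky invariants in (iii) are primary by definition. For $p = t$, condition (iii) is vacuous and Theorem~\ref{nilpotentsum2theo} delivers (ii), so the remaining work happens on a fixed $p$-primary component for $p \in \Irr(\F) \setminus \{t\}$. Such a $p$ is automatically non-even in characteristic $2$, since an even irreducible polynomial would be a perfect square by the Freshman's dream; moreover $u$ is invertible on this component.

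The technical core is the Frobenius identity $p(u)^2 = q(u^2)$, valid in characteristic $2$, where $q(s) := \sum a_i^2 s^i$ whenever $p(t) = \sum a_i t^i$; one checks that $q \in \Irr(\F)$ and $\deg q = \deg p$. A transfinite induction yields the filtration identity
\[
p^{\delta + 2k}(V^u) \;=\; q^{\delta + k}(V^{u^2})
\]
for every ordinal $\delta$ with no predecessor and every $k \in \N$. For (ii)~$\Rightarrow$~(iii), I would use Section~\ref{blockoperatorsSection} to replace $u$ by a conjugate of the form $K(c)$: writing $u$ in its exchange block form $\begin{bmatrix} 0 & A \\ B & 0 \end{bmatrix}$ and conjugating by $\begin{bmatrix} \id & 0 \\ 0 & B^{-1} \end{bmatrix}$ (valid since $B$ is invertible on a non-$t$-primary component) yields $u \sim K(AB)$. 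Setting $c := AB$, the claim $\kappa_{\delta + 2k}(K(c), p) = 0$ reduces via the filtration identity to a linear-algebra statement over the residue field $\K := \F[s]/(q)$: the $2 \times 2$ matrix $P \in \Mat_2(\K)$ representing the induced action of $p(K(c))$ on the relevant graded piece has $\det P = q(\bar c) = 0$ in $\K$ and moreover $P^2 = 0$ by a short char-$2$ computation (the degenerate case $P = 0$ is ruled out since it would force $p$ to be a unit). Hence $\ker P = \im P$, which unfolds to the required inclusion. Combining this vanishing with the dimension identity of Lemma~\ref{supersumdimLemma} applied to $p(u)^2 = q(u^2)$, together with $\kappa_{\delta + k}(u^2, q) = 2 \kappa_{\delta + k}(c, q)$ (since $u^2 = c \oplus c$), also yields
\[
\kappa_{\delta + 2k + 1}(K(c), p) \;=\; \kappa_{\delta + k}(c, q).
\]

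For (iii)~$\Rightarrow$~(ii), I define the candidate Kaplansky sequence $m^{(c)}_{\delta + k} := \kappa_{\delta + 2k + 1}(u, p)$, with a suitable value at $\infty$. Admissibility of $m^{(c)}$ follows from condition (iii) combined with admissibility of $u$'s Kaplansky sequence: if $m^{(c)}_{\alpha + j} = 0$ for all $j \in \N$, then all odd-ordinal invariants of $u$ from the appropriate level vanish, which combined with (iii) kills all invariants of $u$ from that level onward, and admissibility of $u$'s sequence propagates the vanishing upward through $m^{(c)}$. Theorem~\ref{admissiblesequenceTheo} then produces a $q$-primary endomorphism $c$ realising these invariants; the formulas of the previous paragraph show that $K(c)$ and $u$ share all Kaplansky invariants, so $u \sim K(c)$ by Theorem~\ref{KaplanskyTheorem}, and since $K(c)$ has the exchange property manifestly (it swaps the two copies of its domain), Remark~\ref{exchangesimilarRem} transfers it to $u$. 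The main difficulty I anticipate is the transfinite portion of the argument: while $P^2 = 0$ handles finite ordinals cleanly, extending the filtration identity and the Kaplansky-invariant formulas through all limit ordinals, and correctly treating $\kappa_\infty$ in both directions, will require careful transfinite bookkeeping.
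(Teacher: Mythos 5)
Your route is essentially the paper's: reduce to $p$-primary components, handle $p = t$ via local nilpotence, and for $p \neq t$ conjugate $u_p$ to a block operator $K(c)$, compute its Kaplansky invariants (this is the content of Lemma~\ref{Klemma2}), and close with admissibility plus Kaplansky's theorem. Your observation that the reduction of $p(K(c))$ modulo $q$ is a nonzero square-zero $2\times 2$ matrix over the residue field $\F[s]/(q)$, hence rank one with kernel equal to image, is a pleasant conceptual repackaging of the paper's direct solve in Lemma~\ref{Klemma2}, though you would still need to lift that statement carefully back to the graded quotients $p^{\delta+2k}\bigl((V^2)^{K(c)}\bigr)/p^{\delta+2k+2}\bigl((V^2)^{K(c)}\bigr)$ and to treat $\kappa_\infty$ by a separate argument, as you acknowledge.

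The genuine gap is the assertion that in characteristic $2$ every $p \in \Irr(\F) \setminus \{t\}$ is automatically non-even. This is false over non-perfect fields: the Freshman's dream gives $h(t)^2 = \tilde h(t^2)$, where $\tilde h$ has coefficients the squares of those of $h$, so an even polynomial $q(t^2)$ is a perfect square only when every coefficient of $q$ is itself a square in $\F$. Over $\F_2(x)$ the polynomial $t^2 + x$ is irreducible and even, and the paper takes this possibility seriously (see Lemma~\ref{evenexchangelemma} and Lemma~\ref{Klemma1}). Since condition (iii) says nothing about even $p$, the direction (iii)$\Rightarrow$(ii) requires a separate unconditional proof that $u_p$ has the exchange property when $p$ is even — that is precisely Lemma~\ref{evenexchangelemma} — and your Frobenius construction $p(t)^2 = q(t^2)$ would yield a reducible $q$ in that case, so the argument cannot be applied uniformly. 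A smaller caveat: for (ii)$\Rightarrow$(iii) an exchange splitting of $V$ does not visibly restrict to $\Tor_{p^\infty}(u)$; the paper instead passes through (i), noting that the square-zero summands commute with $u^2$, hence with $p(u)^2 = q(u^2)$, and therefore stabilize the $p$-primary parts — you should do the same rather than invoke Remark~\ref{exchangedirectsumRem}, which only runs in the opposite direction.
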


The remainder of this section is organized as follows:
in Section \ref{exchangespecialcasesSection}, we prove that endomorphisms of very specific types
have the exchange property, most notably the locally nilpotent ones, and
we also compute the Kaplansky invariants of block operators of type $K(a)$.
Section \ref{invertible2squarezeroSection} consists of a simple lemma on automorphisms that
are the sum of two square-zero endomorphisms.
The proofs of Theorems \ref{2squarezeroinfinitetheo} and \ref{sum2car2theo}
are given in the next three sections: in Section \ref{squarezeroiImpliesiiSection}, we prove the implication (i) $\Rightarrow$ (ii), while the
implication (iii) $\Rightarrow$ (i) is proved in Section \ref{squarezeroiiiImpliesiSection} for fields with characteristic different from $2$; finally, the proof of Theorem \ref{sum2car2theo} is carried out in Section \ref{sum2car2Section}.

\subsection{From the exchange property to conditions on the Kaplansky invariants}\label{exchangespecialcasesSection}

We start with the case of a locally nilpotent endomorphism. Note that the following result implies Theorem \ref{nilpotentsum2theo}.

\begin{lemma}\label{localnilpexchangelemma}
Let $u$ be a locally nilpotent endomorphism of a vector space $V$ with countable dimension.
Then $u$ has the exchange property.
\end{lemma}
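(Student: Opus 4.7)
The strategy is to invoke Theorem~\ref{nilpotentadaptedbasistheo} to obtain a basis $\bfB$ of $V$ such that $u(e) \in \bfB \cup \{0\}$ for every $e \in \bfB$, and then to split $\bfB$ into two pieces via a natural parity invariant. Once the adapted basis is in hand, the rest of the argument is a straightforward bookkeeping, so there is no real obstacle beyond the existence of such a basis (which is exactly the content of the cited theorem).

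Concretely, for each $e \in \bfB$, since $u$ is locally nilpotent, there is a smallest integer $\nu(e) \geq 1$ such that $u^{\nu(e)}(e) = 0$. The key observation is that whenever $u(e) \neq 0$ one has $u(e) \in \bfB$ and $\nu(u(e)) = \nu(e) - 1$; indeed, $u^{k}(u(e)) = u^{k+1}(e)$ vanishes if and only if $k \geq \nu(e) - 1$. Thus the parity of $\nu$ flips under $u$ on nonzero values.

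Now set $\bfB_1 := \{e \in \bfB : \nu(e) \text{ odd}\}$ and $\bfB_2 := \{e \in \bfB : \nu(e) \text{ even}\}$, and let $V_i$ be the linear span of $\bfB_i$. Since $\bfB = \bfB_1 \sqcup \bfB_2$ is a basis of $V$, we have $V = V_1 \oplus V_2$. For $e \in \bfB_1$, either $u(e) = 0 \in V_2$, or $u(e) \in \bfB$ with $\nu(u(e)) = \nu(e) - 1$ even, whence $u(e) \in V_2$; the symmetric statement holds for $e \in \bfB_2$. Extending by linearity yields $u(V_1) \subset V_2$ and $u(V_2) \subset V_1$, which is precisely the exchange property. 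Morally, one is performing a $2$-coloring of the natural forest structure $e \mapsto u(e)$ on $\bfB$ by the distance-to-the-sink modulo~$2$; the absence of cycles (guaranteed by local nilpotence) is what makes this bipartite coloring consistent.
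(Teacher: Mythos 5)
Your proof is correct and follows essentially the same approach as the paper's: invoke the adapted-basis theorem, define the nilpotence index $\nu$ on basis vectors, observe that $u$ decrements $\nu$ by one, and split the basis by parity of $\nu$. The only differences from the paper's argument are cosmetic (indexing conventions and naming of the two subspaces).
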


\begin{proof}
We note that $u$ is $t$-primary. By Theorem \ref{nilpotentadaptedbasistheo}, we can take a basis $(x_k)_{k \in K}$ of $V$
such that, for all $k \in K$, the vector $u(x_k)$ equals zero or some $x_l$.
For $y \in V$, denote by $\nu(y)$ the least integer $p \geq 0$ such that $u^p(y)=0$, and note that
$\nu(u(y))=\nu(y)-1$ whenever $y \neq 0$.
Set then
$$V_0:=\Vect\{x_k \mid k \in K \; \text{such that}\; \nu(x_k)\; \text{is even}\bigr\}$$
and
$$V_1:=\Vect\{x_k \mid k \in K \; \text{such that}\; \nu(x_k)\; \text{is odd}\bigr\}.$$
Clearly, $V =V_0 \oplus V_1$ and the above remark shows that $u(V_0) \subset V_1$ and $u(V_1) \subset V_0$.
\end{proof}

Next, we adapt the method to obtain the following result:

\begin{lemma}\label{evenexchangelemma}
Let $p \in \Irr(\F)$ be even, and let $u$ be a $p$-primary endomorphism of a vector space $V$ with countable dimension.
Then $u$ has the exchange property.
\end{lemma}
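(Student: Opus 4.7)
The plan is to adapt the proof of Lemma \ref{localnilpexchangelemma} by invoking the adapted basis theorem for $p$-primary endomorphisms (Theorem \ref{adaptedbasisTheo}) and then splitting the resulting basis according to the parity of the exponent of $u$. Set $d := \deg p$. Since $p$ is even, $d$ itself is even and one can write $p(t) = t^d + \sum_{0 \leq k \leq d-2} a_k\, t^k$ with $a_k = 0$ for every odd $k$.

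Apply Theorem \ref{adaptedbasisTheo} to choose a family $(e_i)_{i \in I}$ such that $\calB := (u^k(e_i))_{i \in I,\, 0 \leq k < d}$ is a basis of $V$ and, for every $i \in I$, the vector $p(u)(e_i)$ is either zero or equal to some $e_j$. Let $V_0$ (respectively, $V_1$) be the linear span of those vectors of $\calB$ of the form $u^k(e_i)$ with $k$ even (respectively, $k$ odd), so that $V = V_0 \oplus V_1$.

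The inclusion $u(V_0) \subset V_1$ is immediate: for $k$ even with $0 \leq k \leq d-2$, the image $u(u^k(e_i)) = u^{k+1}(e_i)$ is a basis vector of odd exponent. For $u(V_1) \subset V_0$, the case of odd $k$ with $k+1 \leq d-1$ is similarly trivial, and the only subtle case is $k = d-1$, for which
$$u(u^{d-1}(e_i)) = u^d(e_i) = p(u)(e_i) - \sum_{\substack{0 \leq k \leq d-2 \\ k \text{ even}}} a_k\, u^k(e_i).$$
The right-hand side lies in $V_0$: the sum involves only even powers of $u$ applied to $e_i$, and $p(u)(e_i)$ itself lies in $V_0$ since it is either zero or equal to some $e_j = u^0(e_j)$. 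The evenness of $p$ enters decisively exactly here, ensuring that the correction terms expressing $u^d(e_i)$ in terms of lower powers preserve the parity split; this is the sole place where an obstacle could have arisen, and the hypothesis on $p$ removes it. Everything else is a direct transcription of the locally-nilpotent case.
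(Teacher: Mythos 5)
Your proof is correct and follows essentially the same route as the paper's: invoke Theorem \ref{adaptedbasisTheo}, split the adapted basis by the parity of the exponent $k$ in $u^k(e_i)$, and use the evenness of $p$ to rewrite $u^d(e_i)$ as $p(u)(e_i)$ plus an $\F$-combination of even-exponent basis vectors, all of which lie in $V_0$. The single point requiring care — that $p(u)(e_i)$ is either zero or some $e_j = u^0(e_j) \in V_0$ — is handled identically in both arguments.
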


\begin{proof}
Denote by $d$ the degree of $p$ (this is an even integer).
By Theorem \ref{adaptedbasisTheo}, we can take a basis $(u^i(x_k))_{0 \leq i<d,k \in K}$ of $V$ that is adapted to $u$.
Then, we consider the subspaces
$$V_0:=\Vect\bigl\{u^i(x_k)\mid k \in K, 0 \leq i<d, \; i \; \text{even}\bigr\}$$
and
$$V_1:=\Vect\bigl\{u^i(x_k)\mid k \in K, 0 \leq i<d, \; i \; \text{odd}\bigr\}.$$
Obviously $V=V_0 \oplus V_1$ and $u$ maps $V_0$ into $V_1$.
It remains to prove that $u$ maps $V_1$ into $V_0$, and for this it suffices to see that
$u^d(x_k) \in V_0$ for all $k \in K$. Let $k \in K$. On the one hand $p(u)[x_k] \in V_0$ because
$p(u)[x_k]$ equals either the zero vector or some $x_l$. On the other hand,
as $p$ is even we have
$$u^d(x_k)=p(u)[x_k]+\underset{i=0}{\overset{\frac{d}{2}-1}{\sum}} a_i \, u^{2i}(x_k),$$
for some list $\bigl(a_0,\dots,a_{\frac{d}{2}-1}\bigr) \in \F^{d/2}$.
On the right-hand side of that equality sits a vector of $V_0$. We conclude that $u(V_1) \subset V_0$,
which shows that $u$ has the exchange property.
\end{proof}

Alternatively, the result can be obtained thanks to Kaplansky's theorem, to Theorem \ref{admissiblesequenceTheo}
and to the next lemma, whose proof we leave to the interested reader:

\begin{lemma}\label{Klemma1}
Let $p \in \Irr(\F)$ be even and let $a \in \End(V)$.
Set $b:=K(a) \in \End(V^2)$, and define $q \in \F[t]$ by $p(t)=q(t^2)$.
Then $q$ is irreducible over $\F$. Moreover, if $a$ is $q$-primary, then $b$ is $p$-primary
and, for every $\alpha$, ordinal or $\infty$,
$$\kappa_{\alpha}(b,p)=\kappa_{\alpha}(a,q).$$
\end{lemma}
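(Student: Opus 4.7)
The plan is to exploit the clean block-matrix structure of $b$. The starting observation is that $b^2$ corresponds to $\begin{bmatrix} a & 0 \\ 0 & a \end{bmatrix}$, so $p(b)=q(b^2)$ corresponds to $\begin{bmatrix} q(a) & 0 \\ 0 & q(a) \end{bmatrix}$; this single formula drives everything.

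For the irreducibility of $q$: any nontrivial factorization $q=q_1q_2$ in $\F[t]$ with $\deg q_i\geq 1$ would yield $p(t)=q_1(t^2)\,q_2(t^2)$, with each factor of degree at least $2$, contradicting the irreducibility of $p$. For the $p$-primarity of $b$: given $(x,y)\in V^2$, if $a$ is $q$-primary we choose $n$ with $q(a)^n(x)=0=q(a)^n(y)$, whence $p(b)^n(x,y)=\bigl(q(a)^n(x),\,q(a)^n(y)\bigr)=0$.

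For the Kaplansky invariants, a straightforward transfinite induction yields
$$p^\alpha\bigl((V^2)^b\bigr)=q^\alpha(V^a)\oplus q^\alpha(V^a)$$
for every ordinal $\alpha$, and likewise for $\alpha=\infty$. The successor step is immediate from the diagonal form of $p(b)$; the limit step uses the elementary fact that, for a decreasing family of subspaces, intersection distributes over the binary direct sum inside $V\oplus V$. Consequently the $\F$-linear map
$$p^\alpha\bigl((V^2)^b\bigr)/p^{\alpha+1}\bigl((V^2)^b\bigr)\longrightarrow p^{\alpha+1}\bigl((V^2)^b\bigr)/p^{\alpha+2}\bigl((V^2)^b\bigr)$$
induced by $p(b)$ identifies canonically with the direct sum of two copies of the $\F$-linear map
$$q^\alpha(V^a)/q^{\alpha+1}(V^a)\longrightarrow q^{\alpha+1}(V^a)/q^{\alpha+2}(V^a)$$
induced by $q(a)$. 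Its kernel therefore has dimension over $\F$ equal to $2\deg(q)\,\kappa_\alpha(a,q)=\deg(p)\,\kappa_\alpha(a,q)$; dividing by $\deg(p)$ yields $\kappa_\alpha(b,p)=\kappa_\alpha(a,q)$. The case $\alpha=\infty$ is treated by the same argument after replacing $p^{\alpha+1},q^{\alpha+1}$ with $p^\infty,q^\infty$.

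No step here looks genuinely hard; the only point requiring a little care is the transfinite induction at limit ordinals, and that reduces to the trivial observation that binary direct sums commute with arbitrary intersections. The entire lemma is really a shadow of the single identity $p(b)=\begin{bmatrix} q(a) & 0 \\ 0 & q(a) \end{bmatrix}$.
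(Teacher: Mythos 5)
Your proof is correct. The paper explicitly leaves the proof of Lemma \ref{Klemma1} to the interested reader, so there is no in-paper proof to compare against; your argument supplies the missing details in what is clearly the intended way. The central identity $M\bigl(p(b)\bigr)=\begin{bmatrix}q(a)&0\\0&q(a)\end{bmatrix}$ reduces everything to bookkeeping, and the transfinite induction $p^\alpha\bigl((V^2)^b\bigr)=q^\alpha(V^a)\oplus q^\alpha(V^a)$ (with the same equality for $\alpha=\infty$) is the natural route; the limit step is indeed trivial since the subspace of $V^2$ cut out by two coordinatewise conditions commutes with arbitrary intersections. Your approach parallels the paper's detailed proof of the companion Lemma \ref{Klemma2}, where $p$ is non-even and only $p(b)^2$ diagonalizes, so the submodules interleave over pairs of consecutive ordinals and one must separately show $\kappa_{\delta+2k}(b,p)=0$; in the even case you correctly avoid all of that. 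The only minor point worth stating explicitly, which the paper itself addresses with a parenthetical remark in the proof of Lemma \ref{Klemma2}, is the division by $\deg(p)$ when the Kaplansky invariants are infinite cardinals: one should observe that $\kappa_\alpha(b,p)$ is infinite if and only if $\kappa_\alpha(a,q)$ is, and in that case the two cardinals coincide directly, while the finite case is handled by ordinary arithmetic.
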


%

\begin{lemma}\label{Klemma2}
Assume that $\car(\F)=2$. Let $p \in \Irr(\F) \setminus \{t\}$ be non-even and let $a \in \End(V)$.
Set $b:=K(a)$ and define $q \in \F[t]$ by $q(t^2)=p(t)^2$.
Then $q$ is irreducible over $\F$. Moreover, if $a$ is $q$-primary, then $b$ is $p$-primary and, for every ordinal $\delta$ with no predecessor and every $k \in \N$,
$$\kappa_{\delta+2k+1}(b,p)=\kappa_{\delta+k}(a,q) \quad \text{and} \quad
\kappa_{\delta+2k}(b,p)=0.$$
Finally,
$$\kappa_\infty(b,p)=\kappa_\infty(a,q).$$
\end{lemma}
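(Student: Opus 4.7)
The first task is to decompose $p$ and exhibit $q$. Write $p(t)=p_0(t^2)+t\,p_1(t^2)$ with $p_0,p_1\in\F[t]$; the non-evenness of $p$ is equivalent to $p_1\neq 0$. In characteristic $2$, squaring gives $p(t)^2=p_0(t^2)^2+t^2\,p_1(t^2)^2$, which forces $q(s)=p_0(s)^2+s\,p_1(s)^2$. For the irreducibility of $q$: a factorization $q=r_1r_2$ would yield $r_1(t^2)\,r_2(t^2)=p(t)^2$, so in the UFD $\F[t]$ one would have $r_i(t^2)=c_i\,p(t)^{e_i}$ with $e_1+e_2=2$; the option $e_1=e_2=1$ is impossible since it would make $p(t)=c_1^{-1}r_1(t^2)$ even, leaving $\{0,2\}$ or $\{2,0\}$, which means one of the $r_i$ is a unit. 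A direct block computation gives
\[
p(b)=\begin{bmatrix}p_0(a) & a\,p_1(a) \\ p_1(a) & p_0(a)\end{bmatrix}
\quad\text{and}\quad
p(b)^2=q(b^2)=\begin{bmatrix}q(a) & 0 \\ 0 & q(a)\end{bmatrix}.
\]
Since $a$ is $q$-primary, $p(b)^2$ is locally nilpotent, hence so is $p(b)$, so $b$ is $p$-primary. Moreover $\deg p_1<\deg q$ together with the irreducibility of $q$ yields $\gcd(p_1,q)=1$; applying B\'ezout and the nilpotency of $q(a)$ on each $\Ker q(a)^n$ shows that $p_1(a)$ is an automorphism of $V$, and hence of each $W_\gamma:=q(a)^\gamma V$ (by commutation with $q(a)$).

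The second task is to compute the filtration $V_\alpha:=p(b)^\alpha(V^2)$. Using $p(b)^2=\Diag(q(a),q(a))$, a transfinite induction on ordinals $\gamma$ yields $V_{2\gamma}=W_\gamma\oplus W_\gamma$, where $2\gamma$ denotes ordinal multiplication; the successor step is immediate, and the limit step uses that $\{2\beta:\beta<\gamma\}$ is cofinal in $2\gamma$ when $\gamma$ is a limit. Since $2\delta=\delta$ for every limit ordinal $\delta$, every ordinal of the form $\delta+2k$ (with $\delta$ a limit or $0$ and $k\in\N$) is of the form $2\gamma$ with $\gamma=\delta+k$, matching the indexing of the lemma. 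By definition $V_{2\gamma+1}=p(b)(V_{2\gamma})$. Next, expanding the two linear equations encoded by $p(b)(x,y)=0$ and using both the invertibility of $p_1(a)$ and the identity $q=p_0^2+s\,p_1^2$ yields
\[
\Ker p(b)=\bigl\{\bigl(p_1(a)^{-1}p_0(a)\,y,\;y\bigr):y\in\Ker q(a)\bigr\}.
\]
Since $p_1(a)^{-1}p_0(a)$ commutes with $q(a)$ and therefore preserves $W_\gamma$, intersecting with $V_{2\gamma}=W_\gamma\oplus W_\gamma$ identifies $\Ker p(b)\cap V_{2\gamma}$ with $\Ker q(a)\cap W_\gamma$ through $y\mapsto(p_1(a)^{-1}p_0(a)\,y,y)$.

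The key step is then to prove $\Ker p(b)\cap V_{2\gamma}=\Ker p(b)\cap V_{2\gamma+1}$: given such an element $(x,y)$ with $y\in\Ker q(a)\cap W_\gamma$ and $x=p_1(a)^{-1}p_0(a)y$, one takes $x':=p_1(a)^{-1}y\in W_\gamma$ and $y':=0\in W_\gamma$ and checks directly that $p(b)(x',y')=(x,y)$, which places $(x,y)$ in $p(b)(W_\gamma\oplus W_\gamma)=V_{2\gamma+1}$. The quotient defining $\kappa_{2\gamma}(b,p)$ is thus trivial, so $\kappa_{\delta+2k}(b,p)=0$. For the odd indices,
\[
\kappa_{2\gamma+1}(b,p)=\dim_\K\frac{\Ker p(b)\cap V_{2\gamma+1}}{\Ker p(b)\cap V_{2\gamma+2}}=\dim_\K\frac{\Ker p(b)\cap V_{2\gamma}}{\Ker p(b)\cap V_{2(\gamma+1)}},
\]
and the parametrization $y\mapsto(p_1(a)^{-1}p_0(a)y,y)$ induces an $\F$-linear isomorphism between this quotient and $(\Ker q(a)\cap W_\gamma)/(\Ker q(a)\cap W_{\gamma+1})$; since $\F[t]/(p)$ and $\F[t]/(q)$ both have degree $d$ over $\F$, equating $\F$-dimensions gives $\kappa_{\delta+2k+1}(b,p)=\kappa_{\delta+k}(a,q)$. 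The case $\alpha=\infty$ is handled identically, starting from $V_\infty=\bigcap_\alpha V_\alpha=W_\infty\oplus W_\infty$. The main technical obstacle is the bookkeeping of ordinal arithmetic—especially the identity $2\delta=\delta$ at limits and the cofinality used in the limit step of the induction—without which the finite-level verification would not propagate to arbitrary ordinals.
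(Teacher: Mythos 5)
Your proposal is correct, and it takes a genuinely different route from the paper for the computation of the individual Kaplansky invariants. Where the paper deduces the odd-index formula indirectly --- it applies Lemma~\ref{supersumdimLemma} (which rests on the snake lemma) to the map induced by $p(b)^2$, obtains the identity $\kappa_{\delta+2k}(b,p)+2\,\kappa_{\delta+2k+1}(b,p)+\kappa_{\delta+2k+2}(b,p)=2\,\kappa_{\delta+k}(a,q)$, and then combines it with the separately proved vanishing of the even-index invariants --- you bypass that machinery entirely by parametrizing $\Ker p(b)$ as $\{(p_1(a)^{-1}p_0(a)y,\,y):y\in\Ker q(a)\}$ and reading off the quotients directly. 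Another small but genuine difference: the paper exploits the invertibility of the \emph{even} part $p_0(a)=r(a)$ (nonzero since $p\neq t$) to construct preimages and show $\kappa_{\delta+2k}(b,p)=0$; you exploit the invertibility of the \emph{odd} part $p_1(a)$ (nonzero since $p$ is non-even), which is exactly what makes the explicit kernel description available and lets you show the vanishing by producing the preimage $(p_1(a)^{-1}y,\,0)$ in a single line. Both approaches correctly establish that $p^{\delta+2k}\bigl((V^2)^b\bigr)=\bigl(q^{\delta+k}(V^a)\bigr)^2$ by transfinite induction, and both handle the $\infty$ case by the same reduction. Your version is more elementary (no need for the exact-sequence counting lemma) and gives a cleaner conceptual picture of why the even-index invariants vanish; the paper's approach is more mechanical but has the advantage that Lemma~\ref{supersumdimLemma} is reused verbatim in the proof of Lemma~\ref{Llemma3}, so the indirect route amortizes across the two lemmas.
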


\begin{proof}[Proof of Lemma \ref{Klemma2}]
Note that the definition of $q$ is correct because $\F$ has characteristic $2$. Denote by $d$ the degree of $p$, and note that
it is also the degree of $q$.

If $q$ were reducible, then we could write $q=q_1q_2$ for non-constant monic polynomials $q_1$ and $q_2$, and hence
by the irreducibility of $p$ we would gather that $q_1(t^2)=p=q_2(t^2)$, contradicting the assumption that $p$ is non-even.
Hence, $q$ is irreducible.

Assume from now on that $a$ is $q$-primary. Since $M(b)^2=\begin{bmatrix}
a & 0 \\
0 & a
\end{bmatrix}$, we see that
$M(p^{2k}(b))=\begin{bmatrix}
q^k(a) & 0 \\
0 & q^k(a)
\end{bmatrix}$ for all $k \in \N$.
For every vector $(x_1,x_2) \in V^2$, we can find $k \in \N$ such that $q^k(a)[x_1]=0=q^k(a)[x_2]$, and hence $(x_1,x_2) \in \Ker p^{2k}(b)$.
It follows that $b$ is $p$-primary.

Next, a transfinite induction shows that, for every ordinal $\delta$ with no predecessor and every integer $k \in \N$,
$$p^{\delta+2k}((V^2)^b)=\bigl(q^{\delta+k}(V^a)\bigr)^2.$$
Taking the intersection over all pairs $(\delta,k)$, we deduce that
$$p^\infty\bigl((V^2)^b\bigr)=(q^\infty(V^a)\bigr)^2.$$

Next, we compute $\kappa_\infty(b,p)$. Since $p(b)$ induces a surjective endomorphism of $p^\infty(b)(V^2)$,
it induces a surjective linear map from $\Ker p^2(b) \cap p^\infty(b)(V^2)$ to $\Ker p(b) \cap p^\infty(b)(V^2)$ whose kernel equals
$\Ker p(b) \cap p^\infty(b)(V^2)$, and it follows that
$$\dim_\F\Bigl[\bigl(\Ker p^2(b) \cap p^\infty\bigl((V^2)^n\bigr)\Bigr]=2 \dim_\F \Bigl[\Ker p(b) \cap p^\infty\bigl((V^2)^b\bigr)\Bigr]=2\,d\, \kappa_\infty(b,p).$$
Clearly, the above expression of $p^2(b)$ shows that
$$\Ker p^2(b) \cap  p^\infty(b)(V^2)=\bigl(\Ker q(a) \cap q^\infty(a)(V)\bigr)^2$$
and hence
$$\dim_\F \Bigl[\Ker p^2(b) \cap  p^\infty\bigl((V^2)^b\bigr)\Bigr]=2 \dim_\F \bigl[\Ker q(a) \cap q^\infty(a)(V)\bigr]=2\,d\, \kappa_\infty(a,q).$$
It easily follows that $\kappa_\infty(b,p)=\kappa_\infty(a,q)$ (one needs to discuss whether $\kappa_\infty(b,p)$ is finite or not).

Next, we compute the Kaplansky invariants of $b$ as functions of those of $a$.
First, let $\alpha$ be an ordinal.
By Lemma \ref{supersumdimLemma}, the dimension over $\F$ of the kernel of the linear map
$$p^{\alpha}\bigl((V^2)^b\bigr)/p^{\alpha+2}\bigl((V^2)^b\bigr) \longrightarrow
p^{\alpha+2}\bigl((V^2)^b\bigr)/p^{\alpha+4}\bigl((V^2)^b\bigr)$$
induced by $p^2(b)$ equals
$$d\,\bigl(\kappa_{\alpha}(b,p)+2\, \kappa_{\alpha+1}(b,p)+\kappa_{\alpha+2}(b,p)\bigr).$$

Next, let $\delta$ be an ordinal with no predecessor, and $k$ be an integer.
Then $p^{\delta+2k}\bigl((V^2)^b\bigr)=\bigl(q^{\delta+k}(V^a)\bigr)^2$,
$p^{\delta+2k+2}\bigl((V^2)^b\bigr)=\bigl(q^{\delta+k+1}(V^a)\bigr)^2$ and
$p^{\delta+2k+4}\bigl((V^2)^b\bigr)=\bigl(q^{\delta+k+2}(V^a)\bigr)^2$.
From the above expression of $q^2(b)$, we gather that the surjective linear mapping
$$p^{\delta+2k}\bigl((V^2)^b\bigr)/p^{\delta+2k+2}\bigl((V^2)^b\bigr)
\longrightarrow p^{\delta+2k+2}\bigl((V^2)^b\bigr)/p^{\delta+2k+4}\bigl((V^2)^b\bigr)$$
induced by $p^2(b)$ is equivalent\footnote{Two linear maps $a : U_1 \rightarrow U_2$ and $b : V_1 \rightarrow V_2$ are called equivalent
when there exist isomorphisms $\varphi : U_1 \overset{\simeq}{\rightarrow} V_1$ and $\psi : U_2 \overset{\simeq}{\rightarrow} V_2$ such that
$b=\psi \circ a \circ \varphi^{-1}$.}
to the direct sum of two copies of the surjective linear mapping
$$q^{\delta+k}(V^a)/q^{\delta+k+1}(V^a) \longrightarrow q^{\delta+k+1}(V^a)/q^{\delta+k+2}(V^a)$$
induced by $q(a)$. From this, we deduce the identity
$$\kappa_{\delta+2k}(b,p)+2\, \kappa_{\delta+2k+1}(b,p)+\kappa_{\delta+2k+2}(b,p)=2\,\kappa_{\delta+k}(a,q).$$

Hence, in order to conclude it remains to prove that
$\kappa_{\delta+2k}(b,p)=0$ for every ordinal $\delta$ with no predecessor and every $k\in \N$.
The main difficulty lies in the case when $\delta=k=0$, and the general case will be seen as an easy corollary of it.

To solve the case when $\delta=k=0$, we need to prove that the linear mapping
$$V/p((V^2)^b) \longrightarrow p((V^2)^b)/p^2((V^2)^b)$$
induced by $p(b)$ is injective.
So, let $(x,y) \in V^2$ be such that $p(b)(x,y) \in \im \bigl(p(b)^2\bigr)$. We need to prove that $(x,y) \in \im p(b)$.
To this end, we split $p=r(t^2)+ts(t^2)$ where $r$ and $s$ are polynomials.
Since $p \neq t$ and $p$ is irreducible, we see that $r \neq 0$.

Next,
$$M(p(b))=\begin{bmatrix}
r(a) & a s(a) \\
s(a) & r(a)
\end{bmatrix} \quad \text{and} \quad
M(p(b)^2)=\begin{bmatrix}
r(a)^2+a s(a)^2 & 0 \\
0 & r(a)^2+a s(a)^2
\end{bmatrix}.$$
Hence there exists $x_1 \in V$ such that
$$r(a)[x]+(as(a))[y]=r(a)^2[x_1]+(a s(a)^2)[x_1].$$

Next, we note that $\deg r<\deg p$. Since $r \neq 0$, we deduce that $p$ does not divide $r$.
Since $a$ is $p$-primary, it follows from B\'ezout's theorem that $r(a)$ is an automorphism of $V$.

Setting $x_0:=x_1$, we obtain
$$r(a)\bigl[x-r(a)[x_0]\bigr]=(as(a))\bigl[s(a)[x_0]-y\bigr]$$
and hence
$$x-r(a)[x_0]=(as(a))\bigl[r(a)^{-1}(s(a)[x_0]-y)\bigr].$$
Setting $y_0:=r(a)^{-1}(s(a)[x_0]-y)$, we derive that $x=r(a)[x_0]+(as(a))[y_0]$.
Besides, $r(a)[y_0]=s(a)[x_0]-y$ leads to $y=s(a)[x_0]+r(a)[y_0]$ because $\car(\F)=2$. Hence, $(x,y)=p(b)(x_0,y_0)$, and the claimed result ensues.

Now, we can conclude. Let $\delta$ be an ordinal with no predecessor, and let $k \in \N$.
Set $W:=q^{\delta+k}(V^a)$. We know that $p^{\delta+2k}\bigl((V^2)^b\bigr)=W^2$.
Applying the previous result, where we replace $V$ with $W$ and $a$ with the $p$-primary endomorphism of $W$ it induces, we obtain that the linear mapping
$$p^{\delta+2k}\bigl((V^2)^b\bigr)/p^{\delta+2k+1}\bigl((V^2)^b\bigr) \longrightarrow
p^{\delta+2k+1}\bigl((V^2)^b\bigr)/p^{\delta+2k+2}\bigl((V^2)^b\bigr)$$
induced by $p(b)$ is injective, which leads to $\kappa_{\delta+2k}(b,p)=0$.
\end{proof}

\subsection{Invertible sums of two square-zero operators}\label{invertible2squarezeroSection}

\begin{lemma}\label{sum2invlemma}
Let $V$ be a vector space and let $u \in \GL(V)$.
Assume that $u=a+b$ for square-zero elements $a,b$ of $\End(V)$.
Then $E=\Ker a \oplus \Ker b$, $\im a=\Ker a$ and $\im b=\Ker b$.
\end{lemma}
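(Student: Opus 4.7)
The plan is to pull the decomposition out of $u$ step by step, starting from the most basic information and invoking invertibility of $u$ only where necessary.

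First I would record the trivial consequences of $a^2=b^2=0$: namely $\im a\subset \Ker a$ and $\im b\subset \Ker b$. Next, for the intersection, I would use the \emph{injectivity} of $u=a+b$: any $x\in \Ker a\cap \Ker b$ satisfies $u(x)=0$, so $x=0$. Thus $\Ker a\cap \Ker b=\{0\}$.

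Then I would prove $V=\Ker a+\Ker b$ using the \emph{surjectivity} of $u$. For an arbitrary $x\in V$, set $y:=u^{-1}(x)$; then
\[
x=u(y)=a(y)+b(y),
\]
and $a(y)\in \im a\subset\Ker a$ while $b(y)\in \im b\subset \Ker b$. This gives $V=\Ker a+\Ker b$, and combined with the previous paragraph $V=\Ker a\oplus \Ker b$.

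Finally, for the equalities $\im a=\Ker a$ and $\im b=\Ker b$, it remains to prove the non-trivial inclusions. Given $x\in \Ker a$, I would decompose $u^{-1}(x)=y_a+y_b$ along the direct sum $V=\Ker a\oplus \Ker b$ just obtained, and then compute
\[
x=u(y_a+y_b)=a(y_b)+b(y_a),
\]
using $a(y_a)=b(y_b)=0$. Since $a(y_b)\in \im a\subset \Ker a$ and $b(y_a)\in \im b\subset \Ker b$, and since $x$ lies in $\Ker a$, the uniqueness of the decomposition forces $b(y_a)=0$ and $x=a(y_b)\in \im a$. The argument for $\Ker b\subset \im b$ is symmetric. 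The only subtle point is this last uniqueness argument, but once the direct sum decomposition is established it is straightforward; I do not anticipate a real obstacle in the proof.
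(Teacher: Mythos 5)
Your proof is correct and follows essentially the same approach as the paper's: use injectivity of $u$ to get $\Ker a\cap\Ker b=\{0\}$, surjectivity together with $\im a\subset\Ker a$ and $\im b\subset\Ker b$ to produce a spanning statement, and then a uniqueness-of-decomposition argument to force $\Ker a\subset\im a$. The only cosmetic difference is that the paper first records $\im a+\im b=V$ and decomposes a vector $x\in\Ker a$ directly along $\im a+\im b$, whereas you first establish $V=\Ker a\oplus\Ker b$ and decompose $u^{-1}(x)$ along it; after applying $u$ these two computations coincide.
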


\begin{proof}
Since $u$ is bijective, we find $\Ker a \cap \Ker b=\{0\}$ and $\im a+\im b=V$.
Note also that $\im a \subset \Ker a$ and $\im b \subset \Ker b$ because $a^2=b^2=0$.
For all $x \in \Ker a$, we split $x=x_1+y_1$ for some $x_1 \in \im a$ and some $y_1 \in \im b$,
and then $x-x_1 \in \Ker a \cap \Ker b$, leading to $x=x_1 \in \Ker a$. Thus, $\im a=\Ker a$, and likewise $\im b=\Ker b$,
leading to $V=\Ker a \oplus \Ker b$.
\end{proof}

\subsection{Proof of (i) $\Rightarrow$ (ii) in Theorem \ref{2squarezeroinfinitetheo}}\label{squarezeroiImpliesiiSection}

Let $u \in \End(V)$ be locally finite. Assume that $u=a+b$ for some endomorphisms $a,b$ of $V$ such that $a^2=b^2=0$.
By Corollary \ref{commutationCor1}, $u^2$ commutes with $a$ and $b$. We wish to prove that $u$ has the exchange property.

Assume first that $u$ is an automorphism.
By Lemma \ref{sum2invlemma}, we have $V=\Ker a \oplus \Ker b$.
Then, we see that $u(\Ker a) \subset \im b \subset \Ker b$, and $u(\Ker b) \subset \im a\subset \Ker a$.
Hence, $u$ has the exchange property.

Let us now come back to the general case.
Let $p \in \Irr(\F) \setminus \{t\}$. Set $q:=H_{-1}(p)$.
Then $q$ is irreducible. Moreover, $pq$ is obviously even, leading to $pq=r(t^2)$ for some $r \in \F[t]$. Then
$(pq)(u)=r(u^2)$, and since $a$ and $b$ commute with $u^2$ we obtain that $a$ and $b$ stabilize
$\underset{n \in \N}{\bigcup} \Tor_{r^n}(u^2)$.
\begin{itemize}
\item If $q \neq p$, then
$$\underset{n \in \N}{\bigcup} \Tor_{r^n}(u^2)=\Tor_{p^\infty}(u)\oplus \Tor_{q^\infty}(u),$$
and hence $\Tor_{p^\infty}(u)\oplus \Tor_{q^\infty}(u)$ is stable under both $a$ and $b$.

\item If $q=p$, then
$$\Tor_{p^\infty}(u)=\underset{n \in \N}{\bigcup} \Tor_{p^{2n}}(u)=\underset{n \in \N}{\bigcup} \Tor_{r^n}(u^2),$$
and hence $\Tor_{p^\infty}(u)$ is stable under both $a$ and $b$.
\end{itemize}

The above arguments show that $V':=\underset{p \in \Irr(\F) \setminus \{t\}}{\bigoplus} \Tor_{p^\infty}(u)$ is stable under $a$ and $b$, and it follows that the automorphism $u'$ of $V'$ induced by $u$ is the sum of two square-zero endomorphisms.
By the case treated in the above, $u'$ has the exchange property.
Finally, the endomorphism of $\Tor_{t^\infty}(u)$ induced by $u$ is locally nilpotent, so by Lemma \ref{localnilpexchangelemma} it has the exchange property. Using Remark \ref{exchangedirectsumRem}, we conclude that $u$ has the exchange property.

\subsection{Proof of (iii) $\Rightarrow$ (ii) in Theorem \ref{2squarezeroinfinitetheo}}\label{squarezeroiiiImpliesiSection}

Assume that $\car(\F) \neq 2$. Let $u \in \End(V)$ be locally finite and similar to its opposite.
We wish to prove that $u$ has the exchange property.
First of all, $u_t$ is locally nilpotent, and hence it has the exchange property.

Now let $p \in \Irr(\F) \setminus \{t\}$.
If $p$ is even, we already know that $u_p$ has the exchange property (see Lemma \ref{evenexchangelemma}).
Assume now that $p$ is non-even. Set $q:=H_{-1}(p)$. Since $\car(\F) \neq 2$ and $p$ is neither even nor odd,
we get $p \neq q$.
On the other hand, since $u$ is similar to its opposite, we obtain that
$-u_q$ is similar to $u_p$. Hence, the endomorphism $u_{p,q}$ of $\Tor_{p^\infty}(u) \oplus \Tor_{q^\infty}(u)$ induced by
$u$ is similar to the endomorphism $b$ of $\Tor_{p^\infty}(u)^2$ such that $M(b)=\begin{bmatrix}
u_p & 0 \\
0 & -u_p
\end{bmatrix}$. By Lemma \ref{exchangediagonalLemma}, $b$ has the exchange property, and hence so does $u_{p,q}$.

Finally, by writing $\Irr(\F)$ as the union of the orbits of the involution
$p \mapsto H_{-1}(p)$, we apply Remark \ref{exchangedirectsumRem} to conclude that $u$ has the exchange property.

The proof of Theorem \ref{2squarezeroinfinitetheo} is now complete
(remember that the implications (ii) $\Rightarrow$ (i) and (ii) $\Rightarrow$ (iii) have been proved in
the end of Section \ref{problemSection}).

\subsection{Proof of Theorem \ref{sum2car2theo}}\label{sum2car2Section}

Throughout this section, we assume that $\car(\F)=2$. Let $u \in \End(V)$ be locally finite.

Assume that $u=a+b$ for some square-zero endomorphisms $a$ and $b$ of $V$.
Let $p \in \Irr(\F) \setminus \{t\}$ be non-even. We wish to prove that $\kappa_\alpha(u,p)=0$ for every even ordinal $\alpha$.

First of all, we reduce the situation to the one where $u$ is $p$-primary.
We write $p(t)^2=q(t^2)$ for some $q \in \Irr(\F)$ (see Lemma \ref{Klemma2}). Since $a$ and $b$ commute with $u^2$, they also commute with
$q(u^2)=p(u)^2$, and hence they stabilize $\Tor_{p^\infty}(u)$.
Thus, $u_p$ is the sum of two square-zero endomorphisms. Hence, in the remainder of the proof we can assume that $u$ is $p$-primary.

Next, $u$ is now an automorphism, and hence by Lemma \ref{sum2invlemma} we have $V=\Ker a \oplus \Ker b$,
$\im b=\Ker b$ and $\im a=\Ker a$. Hence, $u$ induces an isomorphism $u_1$ from $\Ker a$ to $\Ker b$ and an isomorphism $u_2$ from $\Ker b$ to $\Ker a$.
Through the isomorphism $(x,y) \in (\Ker a)^2 \mapsto x+u_1(y) \in V$,
we obtain that $u$ is similar to the endomorphism $c$ of $(\Ker a)^2$ such that
$M(c)=\begin{bmatrix}
0 & u_2 \circ u_1 \\
\id_{\Ker a} & 0
\end{bmatrix}$. In particular, $c$ is $p$-primary, so by Lemma \ref{Klemma2} we find that
$\kappa_\alpha(c,p)=0$ for every even ordinal $\alpha$.
Hence, $\kappa_\alpha(u,p)=\kappa_\alpha(c,p)=0$ for every even ordinal $\alpha$.

\vskip 3mm
Conversely, assume that $\kappa_\alpha(u,p)=0$ for every even ordinal $\alpha$ and every non-even polynomial $p \in \Irr(\F) \setminus \{t\}$. We wish to prove that $u$ has the exchange property.
By Remark \ref{exchangedirectsumRem} and Lemma \ref{localnilpexchangelemma}, it will suffice to prove that $u_p$
has the exchange property for every $p \in \Irr(\F) \setminus \{t\}$.
This is already known for $p$ even, by Lemma \ref{evenexchangelemma}.

Let now $p \in \Irr(\F) \setminus \{t\}$ be non-even.
Hence, $\kappa_\alpha(u_p,p)=\kappa_\alpha(u,p)=0$ for every even ordinal $\alpha$.
Now, set $m_{\delta+k}:=\kappa_{\delta+2k+1}(u,p)$ for every ordinal $\delta$ with no predecessor and every $k \in \N$. Set also $m_\infty:=\kappa_\infty(u,p)$.
Since $(\kappa_\alpha(u,p))_{\alpha \in \omega_1 \cup \{\infty\}}$ is admissible (see Section \ref{adaptedbasesSection}),
it is clear that so is $(m_\beta)_{\beta \in \omega_1 \cup \{\infty\}}$.
By Theorem \ref{admissiblesequenceTheo}, there exists a vector space $W$ with countable dimension together with a $q$-primary endomorphism $v \in \End(W)$ such that $\kappa_\beta(v,q)=m_\beta$ for every $\beta \in \omega_1 \cup \{\infty\}$.
Using Lemma \ref{Klemma2}, we obtain that the endomorphism $v'$ of $W^2$ such that $M(v')=\begin{bmatrix}
0 & v \\
\id_W & 0
\end{bmatrix}$ is $p$-primary and satisfies
$\kappa_\alpha(v',p)=\kappa_\alpha(u_p,p)$ for all $\alpha \in \omega_1 \cup \{\infty\}$.
By Kaplansky's theorem (Theorem \ref{KaplanskyTheorem}) $v'$ is similar to $u_p$. Yet, it is obvious that $v'$ has the exchange property, and hence so does $u_p$, by Remark \ref{exchangesimilarRem}.

We conclude that $u$ has the exchange property.

Thus, the implications (i) $\Rightarrow$ (iii) and (iii) $\Rightarrow$ (ii) from Theorem \ref{sum2car2theo}
are proved. The implication (ii) $\Rightarrow$ (i) was already known, and hence the proof of Theorem \ref{sum2car2theo}
is complete.

\section{Products of two involutions}\label{2involutionSection}

\subsection{Main result}

The aim of the present section is to prove Theorem \ref{2involinfinitetheo}.
In order to do so, we will prove a deeper result, in which products of two involutions are characterized in terms
of Kaplansky invariants:

\begin{theo}\label{prod2involBigTheo}
Let $V$ be a vector space with countable dimension.
Let $u \in \GL(V)$ be locally finite. The following conditions are equivalent :
\begin{enumerate}[(i)]
\item $u$ is the product of two involutions in the group $\GL(V)$.
\item $u$ is similar to its inverse.
\item For every $p \in \Irr(\F) \setminus \{t\}$ and every $\alpha \in \omega_1 \cup \{\infty\}$, one has
$\kappa_\alpha(u,p)=\kappa_\alpha(u,p^\sharp)$.
\end{enumerate}
\end{theo}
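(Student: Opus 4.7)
The equivalence (ii)$\,\Leftrightarrow\,$(iii) is already given by Corollary~\ref{simtoinverseCor}. The implication (i)$\,\Rightarrow\,$(ii) is the standard group-theoretic identity: if $u=s_1 s_2$ with $s_1^2=s_2^2=\id_V$, then $s_1 u s_1^{-1}=s_2 s_1=(s_1 s_2)^{-1}=u^{-1}$, so $u$ is conjugate, hence similar, to its inverse. The substance lies in (iii)$\,\Rightarrow\,$(i), which I would establish by following the same template used in Section~\ref{2squarezeroSection} for Theorem~\ref{2squarezeroinfinitetheo}.

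I would start from the primary decomposition $V=\bigoplus_{p \in \Irr(\F)\setminus \{t\}} \Tor_{p^\infty}(u)$ (the prime $t$ drops out because $u \in \GL(V)$) and partition $\Irr(\F)\setminus \{t\}$ into the orbits of the involution $p \mapsto p^\sharp$. Since a direct sum of products of two involutions is again a product of two involutions, it suffices to treat each orbit separately. For a non-fixed orbit $\{p,q\}$ with $q=p^\sharp \neq p$, the proposition giving $\kappa_\alpha(u_p^{-1},q)=\kappa_\alpha(u_p,p)$, together with hypothesis~(iii), shows that the $q$-primary operators $u_q$ and $u_p^{-1}$ have the same Kaplansky invariants, hence are similar by Kaplansky's Theorem~\ref{KaplanskyTheorem}. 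Therefore the restriction of $u$ to $\Tor_{p^\infty}(u)\oplus \Tor_{q^\infty}(u)$ is similar to $\operatorname{diag}(u_p, u_p^{-1})$, which Lemma~\ref{inversediagonalLemma} exhibits as a product of two involutions.

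It remains to handle the fixed orbits $\{p\}$ with $p=p^\sharp$. The easy subcases are linear: when $p=t-1$, the operator $u_p-\id$ is locally nilpotent on $\Tor_{p^\infty}(u)$ and Theorem~\ref{unipotentprod2theo} applies directly; when $p=t+1$ and $\car(\F)\neq 2$, applying Theorem~\ref{unipotentprod2theo} to $-u_p=\id+(\text{locally nilpotent})$ produces $-u_p=s_1 s_2$, whence $u_p=(-s_1)\, s_2$ with $-s_1$ still an involution. The main obstacle is the case $p=p^\sharp$ with $\deg p \geq 2$. My plan for it is to construct a $p$-primary ``standard model'' $v_p$ on a countable-dimensional space that is visibly a product of two involutions and carries the same Kaplansky invariants as $u_p$; Kaplansky's Theorem will then force $u_p$ to be similar to $v_p$, and the desired decomposition transfers by conjugation.

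For the model $v_p$, I would use Theorem~\ref{admissiblesequenceTheo} to realize an arbitrary admissible family of Kaplansky invariants, and assemble $v_p$ as a direct sum of building blocks: for each finite $k$, $\kappa_k(u_p,p)$ copies of the cyclic module $\F[t]/(p^{k+1})$ (whose $t$-action has invariant factor $p^{k+1}$, which is quasi-palindromial since $p^\sharp=p$, and is therefore a product of two involutions by the finite-dimensional Theorem~\ref{prod2finitetheo}), together with Ulm-type modules realizing the transfinite invariants and a Pr\"ufer-type module realizing $\kappa_\infty$. On each block, the property of being a product of two involutions reduces to exhibiting an involution $\sigma$ conjugating the $t$-action $a$ to its inverse: setting $s_2:=\sigma$ and $s_1:=a\sigma$ gives $s_1^2=a\sigma a\sigma=a\cdot a^{-1}=\id$ and $s_1 s_2=a$. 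On the finite cyclic blocks $\sigma$ is the classical palindromic reversal exploiting $p^\sharp=p$; the hardest part I anticipate is building $\sigma$ on the Ulm- and Pr\"ufer-type blocks, which will require a transfinite recursion respecting the quasi-palindromial symmetry of~$p$, and is the true technical core of the argument.
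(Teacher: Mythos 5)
Your reduction to $p$-primary pieces, your handling of nonfixed orbits $\{p,p^\sharp\}$ via $\operatorname{diag}(u_p,u_p^{-1})$ and Lemma~\ref{inversediagonalLemma}, and your treatment of the linear palindromials $t\pm 1$ through Theorem~\ref{unipotentprod2theo} all coincide with the paper's argument and are sound. But for a fixed quasi-palindromial $p=p^\sharp$ of degree $\geq 2$ — which you correctly identify as the substantial case — your proof stops at exactly the point where the real work begins. You want a $p$-primary ``standard model'' $v_p$ that is visibly a product of two involutions and has any prescribed admissible family of Kaplansky invariants, and you propose to build it as a direct sum of cyclic modules $\F[t]/(p^{k+1})$, Ulm-type modules for the transfinite invariants, and a Pr\"ufer-type module for $\kappa_\infty$, each equipped with an involution $\sigma$ conjugating the action to its inverse. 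You then acknowledge that constructing $\sigma$ on the Ulm- and Pr\"ufer-type blocks ``is the true technical core of the argument'' and leave it at a one-line gesture towards transfinite recursion. That is a genuine gap: the plan is plausible in outline, but there are real obstacles (generalized Pr\"ufer modules are built by transfinite recursion with non-obvious compatibility conditions among the $\kappa_\alpha$; direct sums of explicit building blocks do not automatically realize every admissible family in the presence of the $\kappa_\infty$ and transfinite invariants; and one must produce $\sigma$ compatibly with the limit steps of the recursion). You have named the hard step, not proved it.

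The paper avoids this difficulty entirely by a block-operator trick. Corollary~\ref{R1cor} writes a quasi-palindromial $p$ (with $p\neq t\pm 1$) as $p=R_1(q)$ for some $q\in\Irr(\F)\setminus\{t+2,t-2\}$. Theorem~\ref{admissiblesequenceTheo} supplies \emph{any} $q$-primary endomorphism $w$ on a countable-dimensional space with the Kaplansky invariants you want. The point is then twofold: Lemma~\ref{Llemma2} shows $L(w)$ is $p$-primary with $\kappa_\alpha(L(w),p)=\kappa_\alpha(w,q)$ for all $\alpha$, while Lemma~\ref{Llemma1} shows $L(w)$ is a product of two involutions \emph{for any} $w$ — no property of $w$ is needed. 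Kaplansky's theorem then forces $u_p\simeq L(w)$, and you are done. The virtue of the $L(\cdot)$ construction is precisely that it decouples the two requirements you were trying to meet simultaneously: the model is a product of two involutions by construction (a purely algebraic $2\times 2$ identity), and its Kaplansky invariants are controlled by a separate computation. This is the idea missing from your proposal; without it, or some worked-out substitute for your transfinite recursion, the proof of (iii)$\Rightarrow$(i) is incomplete.
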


Here, the equivalence between conditions (ii) and (iii) is known (see Corollary \ref{simtoinverseCor}), and the implication (i) $\Rightarrow$ (ii) is known. Hence, it suffices to prove that condition (iii) implies condition (i).
To this end, an important remark will be useful:

\begin{Rem}\label{superdirectsumremark}
Let $u \in \End(V)$ and let $V=\underset{i \in I}{\bigoplus} V_i$ be such that each $V_i$ is stable under $u$, and
denote by $u_i$ the resulting endomorphism of $V_i$.
Let $p_1,\dots,p_n$ be polynomials in $\F[t]$. Assume that each $u_i$ is a
$(p_1,\dots,p_n)$-product. Then $u$ is
a $(p_1,\dots,p_n)$-product.
Indeed, for each $i \in I$, we can choose a list $(a_i^{(1)},\dots,a_i^{(n)}) \in \End(V_i)^n$
such that $u_i=\underset{k=1}{\overset{n}{\prod}} a_i^{(k)}$ and
$p_k(a_i^{(k)})=0$ for all $k \in \lcro 1,n\rcro$; then,
for all $k \in \lcro 1,n\rcro$ we define $a^{(k)}$ as the endomorphism of $V$ such that
$(a^{(k)})_{|V_i}=a_i^{(k)}$ for all $i \in I$, and we find that $p_k(a^{(k)})=0$
and $u=\underset{k=1}{\overset{n}{\prod}} a^{(k)}$.
\end{Rem}

\subsection{On locally unipotent operators}

The proof of Theorem \ref{upper2prodtheo} is partly based upon the following result:

\begin{lemma}\label{locallynilpotentperturbationLemma}
Let $V$ be a vector space with countable dimension, and let
$u \in \End(V)$ be locally nilpotent and $\calB$ be a basis (considered as a set) that is adapted to $u$
(with respect to the polynomial $p:=t$).
Let $v \in \End(V)$. Assume that for every vector $x \in \calB$,
there is a non-zero scalar $\lambda_x$ such that
$v(x)=\lambda_x u(x)$ modulo $\Vect(u^k(x))_{k \geq 2}$.
Then $v$ is similar to $u$.
\end{lemma}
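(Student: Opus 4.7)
The plan is to construct an explicit isomorphism $\varphi \in \GL(V)$ that conjugates $u$ to $v$, by building a new basis $(w_x)_{x \in \calB}$ on which $v$ acts the way $u$ acts on $\calB$. For each $x \in \calB$, let $h(x) \in \N \setminus \{0\}$ denote its \emph{height}, i.e.\ the least integer $n$ with $u^n(x) = 0$; this is finite by local nilpotence, and $h(u(x)) = h(x)-1$ whenever $u(x) \neq 0$. The target relation will be $v(w_x) = w_{u(x)}$ (with the convention $w_0 := 0$), which by comparison with $u(x)$ will immediately give $\varphi \circ u = v \circ \varphi$.

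I would define $w_x$ by induction on $h(x)$, choosing $w_x$ in the span of the forward chain $\Vect\{u^k(x) : 0 \leq k < h(x)\}$. If $u(x) = 0$ (so $h(x)=1$), the hypothesis reads $v(x) = \lambda_x u(x) + 0 = 0$, and I set $w_x := x$. For the inductive step with $h(x) = n \geq 2$, the hypothesis unfolds as
\[
v(u^k(x)) = \lambda_{u^k(x)}\, u^{k+1}(x) + \sum_{j \geq 2} c_{u^k(x),j}\, u^{k+j}(x)\qquad (0 \leq k < n),
\]
which puts $v$ in upper-triangular form along the chain of $x$ with nonzero diagonal entries $\lambda_{u^k(x)}$. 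Writing the ansatz $w_x := \sum_{k=0}^{n-1} a_k\, u^k(x)$ and demanding $v(w_x) = w_{u(x)}$, I match coefficients of $u^{m+1}(x)$ for $m = 0, 1, \dots, n-2$; this yields a triangular system whose diagonal term is $\lambda_{u^m(x)} \neq 0$, so each $a_m$ is uniquely determined by $a_0, \dots, a_{m-1}$ and by the already-known coefficients of $w_{u(x)}$. The leading coefficient turns out to be $a_0 = \prod_{k=0}^{n-1} \lambda_{u^k(x)}^{-1} \neq 0$.

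To show that $(w_x)_{x \in \calB}$ is a basis of $V$, note that by construction $w_x$ equals $a_0\, x$ plus a linear combination of elements of $\calB$ of strictly smaller height. Filter $V$ by $V_n := \Vect\{y \in \calB : h(y) \leq n\}$: the linear map $\varphi \in \End(V)$ sending $x$ to $w_x$ preserves every $V_n$ and induces on each quotient $V_n / V_{n-1}$ the diagonal automorphism with entries $a_0(x) \neq 0$. A straightforward induction on $n$ then shows that $\varphi$ restricts to an automorphism of each $V_n$; since $V = \bigcup_{n \in \N} V_n$, this gives $\varphi \in \GL(V)$.

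Finally, the identity $\varphi(u(x)) = w_{u(x)} = v(w_x) = v(\varphi(x))$ holds on $\calB$ by construction, hence $\varphi \circ u = v \circ \varphi$ on all of $V$ by linearity, so $v = \varphi \circ u \circ \varphi^{-1}$ is similar to $u$. The main technical point is the recursive solvability of the linear system for $(a_k)$; this rests entirely on the nonvanishing of the scalars $\lambda_{u^k(x)}$, which is exactly the nondegeneracy built into the hypothesis.
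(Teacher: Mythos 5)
Your proof is correct, and it takes a genuinely different and more elementary route than the paper's. The paper first reduces to the case $\lambda_x=1$ by a diagonal conjugation, then computes the submodules $t^\alpha(V^v)$ by transfinite induction, identifies all the Kaplansky invariants $\kappa_\alpha(v,t)$ (the invariant $\kappa_\infty(v,t)$ needing a separate limiting argument via the filtration $E_n$), shows they agree with those of $u$, and finally invokes the Ulm--Mackey--Kaplansky classification theorem --- which is precisely where the countable-dimension hypothesis is used. You instead build an explicit conjugating automorphism $\varphi$ by constructing a new basis $(w_x)_{x\in\calB}$ on which $v$ acts the way $u$ acts on $\calB$. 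Your recursion is sound: the chain subspace $\Vect\{u^k(x):0\le k<h(x)\}$ is $v$-stable by hypothesis, and the system for the coefficients $(a_k)$ is lower-triangular with non-zero diagonal entries $\lambda_{u^m(x)}$; since there are $n-1$ equations in $n$ unknowns, the coefficient $a_{n-1}$ is actually free (it never appears because $v$ kills $u^{n-1}(x)$), and one should fix it, say $a_{n-1}:=0$ --- worth stating explicitly. The filtration argument for invertibility of $\varphi$ is correct: $\varphi$ preserves each $V_n=\Vect\{y\in\calB:h(y)\le n\}$, induces a diagonal (hence bijective) map on each quotient $V_n/V_{n-1}$, and one climbs the filtration. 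This approach avoids Kaplansky's theorem entirely, so once an adapted basis is given, it does not need the countable-dimension hypothesis, and it is shorter. One small bookkeeping slip: the leading coefficient should be $a_0=\prod_{k=0}^{n-2}\lambda_{u^k(x)}^{-1}$ rather than $\prod_{k=0}^{n-1}$ (the product is empty and equal to $1$ when $n=1$, matching $w_x=x$); this does not affect the argument, only the cosmetic identity.
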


\begin{proof}
First of all, we check that $v$ is locally nilpotent. Given $x \in \calB$,
one proves by induction that $v^k(x) \in \Vect(u^i(x))_{i \geq k+1}$ for all $k \in \N$,
and it follows that $v^k(x)=0$ for some $k \in \N$. Since $\calB$ generates $V$, this is enough
to see that $v$ is locally nilpotent.

Next, we reduce the situation to the one where $\lambda_x=1$ for all $x \in \calB$, as follows. We set $\lambda_0:=1$.
Then, we set
$$\mu_x:=\prod_{k=0}^{+\infty} \lambda_{u^k(x)} \in \F \setminus \{0\}$$
for all $x \in \calB$ (which makes sense because $u^k(x)=0$ for large enough $k$).
Note that $\lambda_x \mu_{u(x)}=\mu_x$ for all $x \in \calB$.

We define $\Lambda$ as the automorphism of $V$ that maps $x$ to $\mu_x^{-1}.x$ for all $x \in \calB$.
Then one checks that $v':=\Lambda^{-1} \circ v \circ \Lambda$ satisfies
$v'(x)=u(x)$ modulo $\Vect(u^k(x))_{k \geq 2}$ for all $x \in \calB$.
Since $v'$ is similar to $v$, no generality is lost in assuming that
\begin{equation}\label{Hassumption}
\forall x \in \calB, \quad v(x)=u(x) \quad \text{modulo} \quad \Vect(u^k(x))_{k \geq 2}.
\end{equation}

We shall prove that $v$ has the same Kaplansky invariants as $u$, and the conclusion will
then follow from Kaplansky's theorem.
To do so, we start by giving a somewhat explicit computation of the $t^\alpha(V^u)$ and $t^\alpha(V^v)$
submodules.
Since $\calB$ is adapted to $u$, the restriction of $u$ to the set $X:=\calB \cup \{0\}$
is a mapping $f : X \rightarrow X$.
By transfinite induction, we define a chain of subsets $(f^\alpha(X))_\alpha$ (indexed over the ordinals)
as follows:
\begin{itemize}
\item $f^0(X)=X$;
\item For every ordinal $\alpha$ with a predecessor, $f^\alpha(X)=f(f^{\alpha-1}(X))$;
\item For every limit ordinal $\alpha$, we put $f^\alpha(X)=\underset{\beta<\alpha}{\bigcap}
f^\beta(X)$.
\end{itemize}
We also define $f^\infty(X)$ as the intersection of all the $f^\alpha(X)$ sets (the sequence $\bigl(f^\alpha(X)\bigr)_\alpha$ actually terminates).

A transfinite induction shows that
$$\forall \alpha, \; t^\alpha(V^u)=\Vect(f^\alpha(X)).$$
Next, we prove that, for every ordinal $\alpha$,
$$t^\alpha(V^v)=\Vect(f^\alpha(X)).$$
Let us start with the case $\alpha=1$.
Firstly, $v(x) \in \im u$ for all $x \in \calB$ (by assumption \eqref{Hassumption}), and hence
$\im v \subset \im u$.
Next, let $x \in \calB$. Denote by $\nu$ the least integer $k>0$ such that $u^k(x)=0$.
By downward induction, let us prove that $u^k(x) \in \im v$ for all $k \geq 1$.
Obviously $u^k(x) \in \im v$ for all $k \geq \nu$.
Now, let $k \geq 1$ be such that $u^l(x) \in \im v$ for all $l>k$.
Then $v(u^{k-1}(x))-u(u^{k-1}(x)) \in \Vect(u^l(x))_{l \geq k+1}$; combining this with the induction hypothesis, we obtain that $u^k(x) \in \im v$.
Hence, by induction $u(x) \in \im v$. Varying $x$ yields the inclusion $\im u \subset \im v$,
and we conclude that
$$\im u=\im v=\Vect(f(X)).$$

More generally, assume that, for some ordinal $\alpha$, we have $t^\alpha(V^v)=t^\alpha(V^u)=\Vect(f^\alpha(X))$.
Applying the above line of reasoning to the endomorphisms of $\Vect(f^\alpha(X))$ induced by $u$ and $v$, respectively,
we obtain that $t^{\alpha+1}(V^v)=t^{\alpha+1}(V^u)=\Vect(f^{\alpha+1}(X))$.
Let $\alpha$ be a limit ordinal for which $t^{\beta}(V^v)=t^\beta(V^u)=\Vect(f^\beta(X))$ for every ordinal $\beta<\alpha$.
Taking the intersection, we deduce that $t^{\alpha}(V^v)=t^\alpha(V^u)=\Vect(f^\alpha(X))$.

Hence, the claimed result is obtained by transfinite induction.
By taking the intersection, we also obtain
$$t^\infty(V^u)=t^\infty(V^v)=\Vect(f^\infty(X)).$$

Next, we shall use those equalities, together with the assumptions on $v$,
to prove that $u$ and $v$ have the same Kaplansky invariants.
Let $\alpha$ be an ordinal. Note that $t^\alpha(V^u)/t^{\alpha+1}(V^u)=t^\alpha(V^v)/t^{\alpha+1}(V^v)$.
For $x \in f^\alpha(X) \setminus f^{\alpha+1}(X)$, define
$[x]_\alpha$ as the class of $x$ in the quotient vector space $\Vect(f^\alpha(X))/\Vect(f^{\alpha+1}(X))$.
Then the family $\bigl([x]_\alpha\bigr)_{x \in f^\alpha(X) \setminus f^{\alpha+1}(X)}$
is a basis of $\Vect(f^\alpha(X))/\Vect(f^{\alpha+1}(X))$, i.e.\ of
$t^\alpha(V^u)/t^{\alpha+1}(V^u)$.
Thus, assumption \eqref{Hassumption} yields that the
linear map $t^\alpha(V^u)/t^{\alpha+1}(V^u) \longrightarrow t^{\alpha+1}(V^u)/t^{\alpha+2}(V^u)$
induced by $u$ is precisely the one induced by $v$.
Hence, $\kappa_\alpha(u,t)=\kappa_\alpha(v,t)$.

In order to conclude, it remains to prove that $\kappa_\infty(u,t)=\kappa_\infty(v,t)$.
For $x \in \calB$, denote by $\nu(x)$ the least integer $k>0$ such that $u^k(x)=0$ (that is, $f^k(x)=0$).
For $k \in \N^*$, denote by $\calB_k$ the subset of $f^\infty(X) \setminus \{0\}$ consisting of the vectors $x$ such that $\nu(x)=k$. Thus, $u$ maps $\calB_{k+1}$ onto $\calB_k$ for all $k \in \N^*$.
Note that both $u$ and $v$ vanish everywhere on $\calB_1$.
For $n \in \N$, set
$$E_n:=\Vect(\calB_1 \cup \cdots \cup \calB_n).$$
Note that
$$\forall n \in \N, \; (\Ker u) \cap E_n=\underset{k=1}{\overset{n}{\bigoplus}} \bigl(\Ker u \cap \Vect(\calB_k)\bigr)$$
and that
\begin{equation}\label{umodv}
\forall n \in \N \setminus \{0,1\}, \; \forall x \in E_n, \; v(x)-u(x) \in E_{n-2}.
\end{equation}

Next, we prove by induction that $v$ maps $E_n$ onto $E_{n-1}$ for all $n \in \N^*$.
First of all, \eqref{umodv} shows that $v$ maps $E_n$ into $E_{n-1}$.
Next, let $n \geq 2$ be such that $v$ maps $E_n$ onto $E_{n-1}$.
Let $x \in \Vect(\calB_{n+1})$. Then $v(x)=u(x)+z$ for some $z \in E_{n-1}$.
By induction, $z=v(y)$ for some $y \in E_n$, and hence $u(x)=v(x-y)$ and $x-y \in E_{n+1}$.
As $u(\Vect(\calB_{n+1}))=\Vect(\calB_n)$, this yields
$\Vect(\calB_n) \subset v(E_{n+1})$, and by the induction hypothesis we conclude that
$E_n \subset v(E_{n+1})$. Hence, $v(E_{n+1})=E_n$.

Finally, we shall prove that
$$\forall n \in \N, \; \dim(\Ker u \cap E_n)=\dim(\Ker v \cap E_n).$$
Fix $n \in \N \setminus \{0,1\}$, and let $x \in E_n$ belong to $\Ker v$.
We split $x=y+z$ where $y \in \Vect(\calB_n)$ and $z \in E_{n-1}$.
Then, $v(x)=u(y)$ modulo $E_{n-2}$ by \eqref{Hassumption}. Hence, $u(y) \in E_{n-2} \cap \Vect(\calB_{n-1})=\{0\}$.
Conversely, let $y \in \Vect(\calB_n) \cap \Ker u$. Then
$v(y)=u(y)$ modulo $E_{n-2}$, and hence $v(y) \in E_{n-2}$. It follows that
$v(y)=v(z)$ for some $z \in E_{n-1}$, and hence $y-z \in E_n \cap \Ker v$.
Denoting by $\pi_n$ the projection of $E_n$ on $\Vect(\calB_n)$ along $E_{n-1}$,
we sum up our recent findings by the equality
$$\pi_n(\Ker v \cap E_n)=\Ker u \cap \Vect(\calB_n).$$
Hence, by the rank theorem
\begin{align*}
\dim(\Ker v \cap E_n) & =\dim \pi_n(\Ker v \cap E_n)+\dim (\Ker v \cap E_{n-1}) \\
& =\dim \bigl(\Ker u \cap \Vect(\calB_n)\bigr)+\dim (\Ker v \cap E_{n-1}).
\end{align*}
By induction, we deduce that
$$\forall n \in \N, \; \dim(\Ker v \cap E_n)=\dim(\Ker u \cap E_n).$$
By taking the upper bounds (in cardinals), we conclude that
$$\dim(\Ker v \cap t^\infty(V^v)) = \dim(\Ker u \cap t^\infty(V^u)),$$
that is $\kappa_\infty(v,t)=\kappa_\infty(u,t)$.

Hence, the $t$-primary endomorphisms $u$ and $v$ have the same Kaplansky invariants.
By Kaplansky's theorem, they are similar.
\end{proof}

We are now ready to prove Theorem \ref{unipotentprod2theo}.

\begin{proof}[Proof of Theorem \ref{unipotentprod2theo}]
We choose a basis $\calB$ that is adapted to $u$ (we consider that $\calB$ is a set).
For $x \in \calB \cup \{0\}$, we denote by $\nu(x)$ the least integer $k\geq 0$ such that $u^k(x)=0$.
We define two endomorphisms $a$ and $b$ of $V$ as follows on the basis $\calB$: for all $x \in \calB$,
$$a(x)=\begin{cases}
x & \text{if $\nu(x)$ is odd} \\
-x+u(x) & \text{if $\nu(x)$ is even}
\end{cases} \quad \text{and} \quad
b(x)=\begin{cases}
x+u(x) & \text{if $\nu(x)$ is odd} \\
-x & \text{if $\nu(x)$ is even.}
\end{cases}$$
Noting that $\nu(u(x))=\nu(x)-1$ for all $x \in \calB$, one easily checks that $a^2=\id_V$ and $b^2=\id_V$.
Moreover, for all $x \in \calB$,
$$(ab)(x)=\begin{cases}
x-u(x)+u^2(x) & \text{if $\nu(x)$ is odd} \\
x-u(x) & \text{if $\nu(x)$ is even.}
\end{cases}$$
Thus $v:=ab-\id_V$ satisfies the conditions in Lemma \ref{locallynilpotentperturbationLemma},
and hence it is similar to $u$. It follows that $\id_V+v$ is similar to $\id_V+u$, and hence
$\id_V+u$ is similar to a $(t^2-1,t^2-1)$-product. Hence, $\id_V+u$ is a $(t^2-1,t^2-1)$-product.

The proof that $\id_V+u$ is the product of two unipotent endomorphisms of index $2$ is essentially similar, the only difference
being that $a$ and $b$ are now defined as follows: for all $x \in \calB$,
$$a(x)=\begin{cases}
x & \text{if $\nu(x)$ is odd} \\
x+u(x) & \text{if $\nu(x)$ is even}
\end{cases} \quad \text{and} \quad
b(x)=\begin{cases}
x+u(x) & \text{if $\nu(x)$ is odd} \\
x & \text{if $\nu(x)$ is even.}
\end{cases}$$
It is easily seen that $a$ and $b$ are unipotent of index $2$, and one uses Lemma \ref{locallynilpotentperturbationLemma}
to obtain that $\id_V+u$ is similar to $ab$. We conclude that $\id_V+u$ is the product of two unipotent endomorphisms of index $2$.
\end{proof}

\subsection{Results on block operators}

\begin{lemma}\label{Llemma1}
Let $\calA$ be an $\F$-algebra, and let $a \in \calA$.
Set $b:=\begin{bmatrix}
0 & -1_\calA \\
1_\calA & a
\end{bmatrix} \in \Mat_2(\calA)$.
Then, in the algebra $\Mat_2(\calA)$, the element $b$ is the product of two involutions, and it is
also the product of two unipotent elements of index $2$.
\end{lemma}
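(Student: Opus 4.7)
The plan is to exhibit explicit factorizations of $b$ in $\Mat_2(\calA)$ of each required type and verify them by direct computation.

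For the product of two involutions, I would take the simplest involution of $\Mat_2(\calA)$ as the left factor, namely the swap
$J := \begin{bmatrix} 0 & 1_\calA \\ 1_\calA & 0 \end{bmatrix}$.
The right factor is then forced to be
$Jb = \begin{bmatrix} 1_\calA & a \\ 0 & -1_\calA \end{bmatrix}$,
whose square equals $1_{\Mat_2(\calA)}$ because its diagonal is $(1_\calA,-1_\calA)$ and the off-diagonal contribution $a+(-a)$ cancels. No parameter needs to be chosen here.

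For the product of two unipotents of index $2$, I would parametrize the left factor as the upper-triangular matrix $U_c := \begin{bmatrix} 1_\calA & c \\ 0 & 1_\calA \end{bmatrix}$ (which is automatically unipotent of index $2$), with $c \in \calA$ to be determined. The right factor is then $U_c^{-1}b = \begin{bmatrix} -c & -1_\calA-ca \\ 1_\calA & a \end{bmatrix}$, and requiring it to be unipotent of index $2$ amounts to the vanishing of $\bigl(U_c^{-1}b - 1_{\Mat_2(\calA)}\bigr)^2$. A quick inspection of the top-left entry of that square shows it equals $c\,(c+2\cdot 1_\calA-a)$, so the natural choice is $c := a-2\cdot 1_\calA$. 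A short verification of the remaining three entries with this value of $c$ (one finds in fact that $U_c^{-1}b-1_{\Mat_2(\calA)} = \begin{bmatrix} -(a-1_\calA) & -(a-1_\calA)^2 \\ 1_\calA & a-1_\calA \end{bmatrix}$, whose square visibly vanishes) completes the proof.

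The main and rather minor obstacle is discovering the correct value of $c$; everything else is purely routine arithmetic. Observe that the formula $c=a-2\cdot 1_\calA$ is valid in arbitrary characteristic of $\F$: in characteristic $2$ it simply reads $c=a$, and the computation is unchanged. In particular, no special case analysis on $\car(\F)$ is required.
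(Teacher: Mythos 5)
Your proposal is correct, and the computations all check out: $J$ is clearly an involution, $(Jb)^2 = 1$ by inspection, $U_c$ is unipotent of index $2$, and with $c = a - 2\cdot 1_\calA$ one has
$U_c^{-1}b - 1_{\Mat_2(\calA)} = \begin{bmatrix} -(a-1_\calA) & -(a-1_\calA)^2 \\ 1_\calA & a-1_\calA \end{bmatrix}$,
whose square vanishes entrywise. Your remark that the argument is characteristic-free is also accurate, since the factorizations are polynomial identities over $\Z[a]$.

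Your route differs in style from the paper's. The paper handles both cases simultaneously with a single parameterized computation: for $\varepsilon \in \{1,-1\}$ it factors a conjugate $\begin{bmatrix} a-1_\calA & a-2\cdot 1_\calA \\ 1_\calA & 1_\calA \end{bmatrix}$ of $b$ into two triangular matrices each annihilated by $(t-1)(t-\varepsilon)$, then conjugates back by $T = \begin{bmatrix} 1_\calA & 1_\calA - a \\ 0 & 1_\calA \end{bmatrix}$. Your approach skips the conjugation by producing the factorization of $b$ directly, which makes the involution case immediate and transparent ($b = J\cdot(Jb)$, no parameter to find). In fact, conjugating the paper's unipotent factorization back by $T$ yields exactly your pair $U_c, U_c^{-1}b$, and its involution factorization is your pair in the opposite order (namely $Jb$ times $(Jb)\,b$), so the two proofs ultimately exhibit the same underlying decompositions. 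What the paper's version buys is uniformity — one calculation with a sign parameter covers both statements — at the cost of an extra conjugation step; your version trades that uniformity for a shorter and more elementary verification, particularly in the involution case.
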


\begin{proof}
Set $c:=2\,1_\calA-a$. Let $\varepsilon \in \{1,-1\}$.
Then
$$\begin{bmatrix}
1_\calA & -\varepsilon c \\
0 & \varepsilon\,1_\calA
\end{bmatrix}
\,
\begin{bmatrix}
1_\calA & 0 \\
\varepsilon\,1_\calA & \varepsilon\,1_\calA
\end{bmatrix}=\begin{bmatrix}
a-1_\calA & a-2\,1_\calA \\
1_\calA & 1_\calA
\end{bmatrix}.$$
Setting
$$T:=\begin{bmatrix}
1_\calA & 1_\calA-a \\
0 & 1_\calA
\end{bmatrix},$$
we see that $T$ is invertible and that
$$T\begin{bmatrix}
a-1_\calA & a-2\,1_\calA \\
1_\calA & 1_\calA
\end{bmatrix}T^{-1}=b.$$
One checks that both $\begin{bmatrix}
1_\calA & -\varepsilon\,c \\
0 & \varepsilon\,1_\calA
\end{bmatrix}$ and $\begin{bmatrix}
1_\calA & 0 \\
\varepsilon\,1_\calA & \varepsilon\,1_\calA
\end{bmatrix}$ are annihilated by the polynomial $(t-1)(t-\varepsilon)$.
Taking $\varepsilon=-1$ (respectively, $\varepsilon=1$), we deduce that
$b$ is conjugated to the product of two involutions (respectively, of two unipotent elements of index $2$),
and hence it is the product of two involutions (respectively, of two unipotent elements of index $2$).
\end{proof}

\begin{Not}
Let $p \in \F[t]$ be monic with degree $d$. For $\delta \in \F \setminus \{0\}$, we set
$$R_\delta(p):=t^d p(t+\delta t^{-1}),$$
which is a monic polynomial of degree $2d$.
\end{Not}

Note that $R_\delta(p)(0) \neq 0$.
One checks that, given monic polynomials $p$ and $q$,
$$R_\delta(pq)=R_\delta(p)\,R_\delta(q),$$

In particular if $R_\delta(p)$ is irreducible then so is $p$.

Note that $R_1(p)$ is a quasi-palindromial for every monic polynomial $p \in \F[t]$.
A sort of converse statement holds (it is folklore and we do not give any detail of its proof):

\begin{lemma}\label{R1lemma}
Let $p \in \F[t]$ be a quasi-palindromial with no root in $\{1,-1\}$.
Then $p=R_1(q)$ for some monic polynomial $q \in \F[t]$.
\end{lemma}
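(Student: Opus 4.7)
The plan is to show that the no-root hypothesis forces $p$ to be a genuine palindrome of even degree, and then to use the standard substitution $u = t + t^{-1}$ to express $p(t)/t^d$ as a polynomial in $u$.

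First I would write $p(t) = \sum_{k=0}^n a_k t^k$ with $a_n = 1$ and unpack the condition $p^\sharp = p$, which translates into $a_{n-k} = p(0)\,a_k$ for all $k$; comparing leading and trailing coefficients forces $p(0)^2 = 1$, so $p(0) \in \{1,-1\}$. If $p(0) = -1$ (only possible when $\car(\F) \neq 2$), then summing the equalities $a_{n-k} = -a_k$ over $k$ gives $p(1) = 0$, contradicting the hypothesis. Hence $p(0) = 1$ and $p$ is a genuine palindrome: $a_{n-k} = a_k$.

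Next I would rule out odd degree. If $n$ were odd, then in $p(-1) = \sum (-1)^k a_k$ the pairing $k \leftrightarrow n-k$ (which never collides) would give $(-1)^k + (-1)^{n-k} = 0$ when $\car(\F) \neq 2$, while in characteristic $2$ the same pairing applied to $p(1) = \sum a_k$ yields $a_k + a_{n-k} = 2 a_k = 0$. Either way a root of $p$ in $\{1,-1\}$ appears, contradicting the hypothesis, so $n = 2d$ is even.

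Finally, with $p$ palindromic of degree $2d$, grouping symmetric terms gives
$$\frac{p(t)}{t^d} = a_d + \sum_{j=1}^d a_{d-j}\,\bigl(t^j + t^{-j}\bigr).$$
Setting $u := t + t^{-1}$ and $T_j(u) := t^j + t^{-j}$, the recursion $T_{j+1} = u T_j - T_{j-1}$ with $T_0 = 2$ and $T_1 = u$ shows by induction that each $T_j$ is a monic polynomial in $u$ of degree $j$. Defining
$$q(u) := a_d + \sum_{j=1}^d a_{d-j}\,T_j(u),$$
we obtain $p(t) = t^d q(t + t^{-1}) = R_1(q)$, and $q$ is monic of degree $d$ because the only $u^d$ contribution comes from $a_0 T_d = T_d$ (since $a_0 = p(0) = 1$). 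There is no real obstacle here; the only subtlety is the parity argument in the first two paragraphs, and once palindromicity and even degree are secured the Chebyshev-type substitution finishes the proof routinely.
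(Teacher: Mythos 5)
Your proof is correct and complete. The paper explicitly declines to give a proof of this lemma (``it is folklore and we do not give any detail of its proof''), so there is nothing to compare against; what you have supplied fills that gap. The argument is clean: unpacking $p^\sharp = p$ gives $a_{n-k} = p(0)\,a_k$ and $p(0)^2 = 1$; the no-root hypothesis then forces $p(0)=1$ (ruling out $p(1)=0$ when $\car(\F)\neq 2$) and even degree (ruling out $p(-1)=0$, or $p(1)=0$ in characteristic $2$); and the final Chebyshev-style recursion $T_{j+1}=uT_j-T_{j-1}$ with $T_0=2$, $T_1=u$ produces a monic $q$ of degree $d$ with $p=t^d q(t+t^{-1})=R_1(q)$, the leading coefficient being $a_0=p(0)=1$. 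All the parity arithmetic checks out in both characteristics, and the pairing $k\leftrightarrow n-k$ never collides precisely because $n$ is odd in that step. No gaps.
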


\begin{cor}\label{R1cor}
Let $p \in \F[t] \in \Irr(\F) \setminus \{t+1,t-1\}$ be such that $p=p^\sharp$.
Then $p=R_1(q)$ for some $q \in \Irr(\F) \setminus \{t+2,t-2\}$.
\end{cor}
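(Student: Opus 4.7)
The plan is to apply Lemma \ref{R1lemma} to produce a monic $q$ with $p=R_1(q)$, then to deduce the irreducibility of $q$ from that of $p$ via the multiplicativity of $R_1$, and finally to rule out the two forbidden values $q = t \pm 2$ by a direct computation. No deep input is needed beyond what has already been collected just before the statement.

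To invoke Lemma \ref{R1lemma} I must check that $p$ is a quasi-palindromial with no root in $\{1,-1\}$. The first condition is the hypothesis $p = p^\sharp$. For the second, observe that if $p(1)=0$ then $t-1$ divides $p$, and since $p$ is monic irreducible this would force $p = t-1$, contradicting the assumption $p \neq t-1$; the same argument with $-1$ in place of $1$ uses $p \neq t+1$. Applying the lemma thus yields a monic polynomial $q \in \F[t]$ with $p = R_1(q)$, and comparing degrees shows $\deg q \geq 1$.

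Next, I would derive the irreducibility of $q$ from the multiplicativity formula $R_1(q_1 q_2) = R_1(q_1)\,R_1(q_2)$ recalled just before Lemma \ref{R1lemma}. If $q$ admitted a non-trivial factorization $q = q_1 q_2$ into monic polynomials with $\deg q_i \geq 1$, then $p = R_1(q_1)\,R_1(q_2)$ would be a factorization of $p$ into two factors of respective degrees $2\deg q_1$ and $2\deg q_2$, both $\geq 2$, contradicting the irreducibility of $p$. Hence $q$ is irreducible.

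Finally, I would exclude $q \in \{t-2, t+2\}$ by straight calculation: one checks directly from the definition that $R_1(t-2)(t) = t(t + t^{-1} - 2) = (t-1)^2$ and similarly $R_1(t+2)(t) = (t+1)^2$, both of which are reducible. If $q$ were equal to $t-2$ or $t+2$, then $p$ itself would be reducible, contradicting the hypothesis. This concludes the proof. The argument is entirely mechanical; the only step that uses the full strength of the hypotheses is the verification that $1$ and $-1$ are not roots of $p$, which is where the exclusion $p \notin \{t+1, t-1\}$ is crucial.
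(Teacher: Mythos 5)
Your proposal is correct and follows essentially the same route as the paper: invoke Lemma \ref{R1lemma} to obtain a monic $q$ with $p=R_1(q)$, deduce irreducibility of $q$ from the multiplicativity of $R_1$ (the paper cites the remark that irreducibility of $R_\delta(p)$ forces that of $p$, which is exactly what you rederive), and rule out $q\in\{t+2,t-2\}$ by computing $R_1(t\pm 2)=(t\pm 1)^2$. Your only addition is the explicit check that $p$ has no root in $\{1,-1\}$ before applying Lemma \ref{R1lemma}, which the paper leaves implicit; this is a worthwhile detail but not a different argument.
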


\begin{proof}[Proof of Corollary \ref{R1cor}]
Lemma \ref{R1lemma} yields a monic polynomial $q \in \F[t]$ such that $R_1(q)=p$.
Using one of the above remarks, we find that $q$ is irreducible.
Noting that $R_1(t+2)=(t+1)^2$ and $R_1(t-2)=(t-1)^2$, we conclude that $q \not\in \{t+2,t-2\}$.
\end{proof}

%
%
%

\begin{lemma}\label{Llemma2}
Let $p \in \Irr(\F) \setminus \{t-2,t+2\}$, and let $V$ be a vector space.
Assume that $q:=R_1(p)$ is irreducible and let $a \in \End(V)$ be $p$-primary.
Then $L(a)$ is $q$-primary and, for every $\alpha$, either an ordinal or $\infty$,
$$\kappa_\alpha(L(a),q)=\kappa_\alpha(a,p).$$
\end{lemma}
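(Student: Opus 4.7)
The plan is to exploit the factorization of $q(b)$ that arises from the identity $R_1(p)(t)=t^d\,p(t+t^{-1})$. First I would verify by a direct $2\times 2$ block computation that $b:=L(a)$ is invertible with $M(b^{-1})=\begin{bmatrix} a & \id_V \\ -\id_V & 0 \end{bmatrix}$, so that $M(b+b^{-1})=\begin{bmatrix} a & 0 \\ 0 & a\end{bmatrix}$. Consequently $p(b+b^{-1})$ has matrix $\mathrm{diag}(p(a),p(a))$, and since $b^d$ commutes with everything polynomial in $b$, we obtain the key identity
$$q(b)=b^d\,p(b+b^{-1})=b^d\,\mathrm{diag}(p(a),p(a)).$$

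Next, I would observe that for any $a$-stable submodule $W$ of $V^a$, the subspace $W^2\subset V^2$ is stable under both $b$ and $b^{-1}$ (readable from the matrices), so $b$ induces an automorphism of $W^2$, and in particular so does $b^d$. I would then prove by transfinite induction that $q^\alpha\bigl((V^2)^b\bigr)=\bigl(p^\alpha(V^a)\bigr)^2$ for every ordinal $\alpha$: the successor step follows from the displayed identity combined with the fact that $b^d$ is an automorphism of $\bigl(p^{\alpha+1}(V^a)\bigr)^2$, and the limit step is immediate by intersection. Since $a$ is $p$-primary, every vector of $V^2$ is annihilated by a power of $\mathrm{diag}(p(a),p(a))$, hence by a power of $q(b)$, so $b$ is $q$-primary; taking intersection over all $\alpha$ also gives $q^\infty\bigl((V^2)^b\bigr)=\bigl(p^\infty(V^a)\bigr)^2$.

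For the Kaplansky invariants themselves, on each quotient
$$\bigl(p^\alpha(V^a)\bigr)^2\bigl/\bigl(p^{\alpha+1}(V^a)\bigr)^2\;\simeq\;\bigl(p^\alpha(V^a)/p^{\alpha+1}(V^a)\bigr)^2$$
the operator $b^d$ induces an automorphism (and likewise on the target quotient), so the kernel of the map induced by $q(b)$ coincides with the kernel of the map induced by $\mathrm{diag}(p(a),p(a))$, which is $K^2$ where $K$ is the kernel of the map induced by $p(a)$ on $p^\alpha(V^a)/p^{\alpha+1}(V^a)$. Using that $\deg q=2d$, comparison of $\F$-dimensions yields $2d\,\kappa_\alpha(b,q)=2\,d\,\kappa_\alpha(a,p)$, hence $\kappa_\alpha(b,q)=\kappa_\alpha(a,p)$. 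The case $\alpha=\infty$ is handled identically, by applying the same factorization argument to $\Ker q(b)\cap q^\infty\bigl((V^2)^b\bigr)=\bigl(\Ker p(a)\cap p^\infty(V^a)\bigr)^2$ and taking $\F$-dimensions.

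The main obstacle is the careful bookkeeping in the transfinite induction: one must verify at each stage that $b$ and $b^{-1}$ stabilize $\bigl(p^\alpha(V^a)\bigr)^2$, so that $b^d$ genuinely descends to an automorphism of each quotient. This is what makes the kernels of the maps induced by $q(b)$ and by $\mathrm{diag}(p(a),p(a))$ coincide, and it is the single point where the whole correspondence between the $q$-filtration of $b$ and the $p$-filtration of $a$ is anchored. The hypotheses $p\notin\{t-2,t+2\}$ and $q\in\Irr(\F)$ are not actually used inside the argument; they merely guarantee that the statement is non-vacuous, so I would not dwell on them.
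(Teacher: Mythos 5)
Your proof is correct and follows essentially the same route as the paper's: the factorization $q(b)=b^d\,p(b+b^{-1})$ with $p(b+b^{-1})$ of matrix $\mathrm{diag}(p(a),p(a))$, the transfinite-induction identity $q^\alpha\bigl((V^2)^b\bigr)=\bigl(p^\alpha(V^a)\bigr)^2$, and the use of $b^d$ as an automorphism on each quotient to reduce kernel computations to those of $\mathrm{diag}(p(a),p(a))$. One quibble with your closing remark: the irreducibility of $q$ \emph{is} used in your argument, not merely as a non-vacuity condition — the quotients $q^\alpha/q^{\alpha+1}$ are $\F[t]/(q)$-vector spaces (and $\kappa_\alpha(b,q)$ is a dimension over that field of degree $2d$) precisely because $q$ is irreducible, which is what makes your dimension count $\dim_\F(\ker)=2d\,\kappa_\alpha(b,q)$ valid.
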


\begin{proof}
Set $b:=L(a)$ and denote by $d$ the degree of $p$.
First of all, we find that
$$M\bigl(b^{-1}\bigr)=\begin{bmatrix}
a & \id_V \\
-\id_V & 0
\end{bmatrix}$$
and hence
$$M\bigl(b+b^{-1}\bigr)=\begin{bmatrix}
a & 0 \\
0 & a
\end{bmatrix}.$$
It follows that
$$M\bigl(q(b)\bigr)=\begin{bmatrix}
p(a) & 0 \\
0 & p(a)
\end{bmatrix}\, M(b^d)=M(b^d)\,\begin{bmatrix}
p(a) & 0 \\
0 & p(a)
\end{bmatrix}\,$$
and, more generally,
$$\forall n \in \N, \; M(q(b)^n)=\begin{bmatrix}
p(a)^n & 0 \\
0 & p(a)^n
\end{bmatrix}\,M(b^{nd})=M(b^{nd})\,\begin{bmatrix}
p(a)^n & 0 \\
0 & p(a)^n
\end{bmatrix}.$$
Since $a$ is $p$-primary, it is clear from the last point that $L(a)$ is $q$-primary.

Next, since $b$ is $q$-primary and invertible,
we see that $b$ induces an automorphism of every linear subspace it stabilizes, in particular it maps bijectively
$q^\alpha((V^2)^{b})$ onto itself for every ordinal $\alpha$ as well as for $\alpha=\infty$.
Then we prove by transfinite induction that for every ordinal $\alpha$,
$$q^\alpha\bigl((V^2)^{b}\bigr)=\bigl(p^\alpha(V^a)\bigr)^2.$$
Indeed, if this holds for some ordinal $\alpha$, then $b^{d}$ maps $\bigl(p^\alpha(V^a)\bigr)^2$
onto itself, and hence the above description of $q(b)$ shows that
$$q(b)\bigl[q^\alpha\bigl((V^2)^{b}\bigr)\bigr]=\bigl(p^{\alpha+1}(V^a)\bigr)^2,$$
that is
$$q^{\alpha+1}\bigl((V^2)^{b}\bigr)=\bigl(p^{\alpha+1}(V^a)\bigr)^2.$$
Moreover, if for some ordinal $\alpha$ with no predecessor, we have
$$\forall \beta<\alpha, \; q^\beta\bigl((V^2)^{b}\bigr)=\bigl(p^\beta(V^a)\bigr)^2,$$
then, by taking the intersection we deduce that
$$q^\alpha\bigl((V^2)^{b}\bigr)=\bigl(p^\alpha(V^a)\bigr)^2.$$
Hence, this holds for every ordinal $\alpha$, and by taking the full intersection we obtain that it holds for $\alpha=\infty$ as well.

Next, let $\alpha$ be an ordinal.
The linear map from
$q^\alpha\bigl((V^2)^{b}\bigr)/q^{\alpha+1}\bigl((V^2)^{b}\bigr)$ to
$q^{\alpha+1}\bigl((V^2)^{b}\bigr)/q^{\alpha+2}\bigl((V^2)^{b}\bigr)$ induced by $q(b)$
equals the composite of the linear map
$$\overline{u} : q^\alpha\bigl((V^2)^{b}\bigr)/q^{\alpha+1}\bigl((V^2)^{b}\bigr) \longrightarrow q^{\alpha+1}\bigl((V^2)^{b}\bigr)/q^{\alpha+2}\bigl((V^2)^{b}\bigr)$$
induced by $p(b+b^{-1})$ with the automorphism of $q^{\alpha+1}\bigl((V^2)^{b}\bigr)/q^{\alpha+2}\bigl((V^2)^{b}\bigr)$ induced by $b^d$.
Hence, its kernel equals the one of $\overline{u}$, and by the above expression of $p(b+b^{-1})$ we deduce that
$\overline{u}$ is equivalent to the direct product of two copies of the $\F$-linear mapping
$$\overline{v}:=p^\alpha(V^a)/p^{\alpha+1}(V^a) \longrightarrow p^{\alpha+1}(V^a)/p^{\alpha+2}(V^a)$$
induced by $p(a)$. Since $\deg q=2d$, this yields
$$2\,d\, \kappa_\alpha(b,q)=\dim_\F \Ker \overline{u}=2 \dim_\F \Ker \overline{v}=2\,d\,\kappa_\alpha(a,p),$$
and we conclude that $\kappa_\alpha(b,q)=\kappa_\alpha(a,q)$.
The equality $\kappa_\infty(b,q)=\kappa_\infty(a,q)$ is obtained in a similar fashion.
\end{proof}

\subsection{Proof of Theorem \ref{2involinfinitetheo}}

Let $V$ be a vector space with countable dimension, and let $u \in \GL(V)$ be locally finite.
We already know that if $u$ is the product of two involutions then $u$ is similar to its inverse.
Moreover, we know from Corollary \ref{simtoinverseCor} that conditions (ii) and (iii) are equivalent in Theorem
\ref{2involinfinitetheo}. In order to conclude, it suffices to prove that condition (iii) implies condition (i).

Hence, let us assume that $\kappa_\alpha(u,p)=\kappa_\alpha(u,p^\sharp)$
for every $\alpha \in \omega_1 \cup \{\infty\}$ and every $p \in \Irr(\F)\setminus \{t\}$.
We wish to prove that $u$ is the product of two involutions.

First of all, $u_{t-1}$ is the sum of the identity of $\Tor_{(t-1)^\infty}(u)$
and of a locally nilpotent endomorphism of it. By Theorem \ref{unipotentprod2theo}, it is the product of two involutions.
Likewise, $-u_{t+1}=ab$ for some involutions $a$ and $b$ of $\Tor_{(t+1)^\infty}(u)$, and hence $u_{t+1}=(-a)b$ is the product of two involutions.

Let $p \in \Irr(\F) \setminus \{t,t-1,t+1\}$ be a quasi-palindromial. By Corollary \ref{R1cor}, $p=R_1(q)$ for some
$q \in \Irr(\F) \setminus \{t+2,t-2\}$. The sequence $(\kappa_\alpha(u,p))_{\alpha \in \omega_1 \cup \{\infty\}}$
is admissible, and hence there exists a $q$-primary endomorphism $w$ of a vector space $W$ with countable dimension
such that $\kappa_\alpha(w,q)=\kappa_\alpha(u,p)$ for all $\alpha \in \omega_1 \cup \{\infty\}$.
By Lemma \ref{Llemma1}, $L(w)$ is $p$-primary and $\kappa_\alpha(L(w),p)=\kappa_\alpha(w,q)$ for all $\alpha \in \omega_1 \cup \{\infty\}$.
Hence, Kaplansky's theorem shows that $L(w)$ is similar to $u_p$.
Besides, Lemma \ref{Llemma2} shows that $L(w)$ is the product of two involutions, and we conclude that so is $u_p$.

Finally, let $p \in \Irr(\F) \setminus \{t\}$ be such that $p^\sharp \neq p$. In particular $p \not\in \{t-1,t+1\}$.
Combining our assumptions with Corollary \ref{simtoinverseCor},
we find that
$$\forall \alpha \in \omega_1 \cup \{\infty\}, \; \kappa_\alpha\bigl(u_{p^\sharp}^{-1},p\bigr)=\kappa_\alpha(u_{p^\sharp},p^\sharp)
=\kappa_\alpha(u,p^\sharp)=\kappa_\alpha(u,p)=\kappa_\alpha(u_p,p),$$
and hence $u_{p^\sharp}^{-1}$ is similar to $u_p$. We deduce that $u_{p^\sharp}$ is similar to $(u_p)^{-1}$, and
we conclude that the endomorphism $v_p$ of $\Tor_{p^\infty}(u) \oplus \Tor_{(p^\sharp)^\infty}(u)$
induced by $u$ is similar to the endomorphism $w$ of $\bigl(\Tor_{p^\infty}(u)\bigr)^2$ such that
$$M(w)=\begin{bmatrix}
u_p & 0 \\
0 & (u_p)^{-1}
\end{bmatrix}.$$
Lemma \ref{inversediagonalLemma} shows that $w$ is the product of two involutions, and we conclude that so is $v_p$.

Splitting $\Irr(\F) \setminus \{t\}$ into the orbits of the involution $p \in \Irr(\F) \setminus \{t\} \mapsto p^\sharp$, we conclude by Remark
\ref{superdirectsumremark} that $u$ is the product of two involutions.

\subsection{Proof of Theorem \ref{upper2prodbistheo}}

Let $M \in \calM_{Cf}(\F)$ be an upper-triangular matrix whose diagonal entries equal $1$ or $-1$.
Denote by $u$ an associated endomorphism of a vector space $V$ over $\F$.
Let $p \in \Irr(\F) \setminus \{t,t-1,t+1\}$. Then $p(1) \neq 0$ and $p(-1) \neq 0$, and hence
$p(M)$ is upper-triangular with all diagonal entries non-zero. It follows that $p(u)$ is an automorphism of $V$,
and hence $\kappa_\alpha(u,p)=0$ for all $\alpha \in \omega_1 \cup \{\infty\}$.
Noting that $(t-1)^\sharp=t-1$ and $(t+1)^\sharp=t+1$, we deduce that $u$ satisfies condition (iii)
from Theorem \ref{2involinfinitetheo}. Hence, $u$ is the product of two involutory automorphisms of $V$, and
we conclude that $M$ is the product of two involutions in the group $\GL_{Cf}(\F)$.

\section{Products of two unipotent endomorphisms of index $2$}\label{2unipotentSection}

For fields of characteristic $2$, unipotent elements of index $2$ coincide with involutions.
Throughout this section, we will assume that the characteristic of $\F$ differs from $2$, and in that case we will characterize the 
automorphisms that are the product of two unipotent automorphisms of index $2$.

\subsection{Main result}

Our aim here is to prove the following theorem.

\begin{theo}\label{2unipotentTheo}
Let $V$ be a vector space with countable dimension. Assume that $\car(\F) \neq 2$.
Let $u \in \GL(V)$ be locally finite. The following conditions are equivalent:
\begin{enumerate}[(i)]
\item $u$ is the product of two unipotent endomorphisms of index $2$.
\item $u$ is similar to its inverse, and $\kappa_\alpha(u,t+1)=0$ for every even ordinal $\alpha \in \omega_1$.
\item For every $p \in \Irr(\F) \setminus \{t\}$, one has
$\forall \alpha \in \omega_1 \cup \{\infty\}, \; \kappa_\alpha(u,p)=\kappa_\alpha(u,p^\sharp)$, and
$\kappa_\alpha(u,t+1)=0$ for every even ordinal $\alpha \in \omega_1$.
\end{enumerate}
\end{theo}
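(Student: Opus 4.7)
The equivalence (ii) $\Leftrightarrow$ (iii) is immediate from Corollary \ref{simtoinverseCor}: each consists of the Kaplansky-invariant translation of $u \sim u^{-1}$ together with the identical supplementary vanishing $\kappa_\alpha(u,t+1)=0$ at even $\alpha \in \omega_1$. The real content is therefore the equivalence between (i) and (iii), which I plan to prove by extending the strategy of Theorem \ref{prod2involBigTheo} so as to accommodate the new constraint at $t+1$.

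For the implication (iii) $\Rightarrow$ (i), I would decompose $V$ along its $p$-primary parts and recombine via Remark \ref{superdirectsumremark}. The case $p = t - 1$ is handled directly by Theorem \ref{unipotentprod2theo}. For $p \in \Irr(\F) \setminus \{t, t-1, t+1\}$ with $p = p^\sharp$, write $p = R_1(q)$ via Corollary \ref{R1cor}, construct a $q$-primary endomorphism $w$ matching the Kaplansky invariants via Theorem \ref{admissiblesequenceTheo}, and combine Lemmas \ref{Llemma2} and \ref{Llemma1} together with Kaplansky's Theorem \ref{KaplanskyTheorem} to conclude that $u_p$ is similar to $L(w)$ and is therefore a product of two unipotent elements of index $2$. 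For $p \neq p^\sharp$ (with $p \neq t$), the Kaplansky hypothesis forces $u_{p^\sharp} \sim (u_p)^{-1}$, so the restriction of $u$ to $\Tor_{p^\infty}(u) \oplus \Tor_{(p^\sharp)^\infty}(u)$ is similar to the block-diagonal operator with diagonal entries $u_p$ and $(u_p)^{-1}$; since $p \notin \{t-1, t+1\}$ makes $(u_p)^2 - \id$ invertible, the ``unipotent of index $2$'' clause of Lemma \ref{inversediagonalLemma} applies. The remaining case $p = t + 1$ is the main novelty: the hypothesis concentrates the Kaplansky data on odd ordinals and $\infty$, so Theorem \ref{admissiblesequenceTheo} realizes them as the invariants of a locally nilpotent endomorphism $n$ of some space $W_0$; one then introduces a $2 \times 2$ block operator $c$ on $W_0^2$, say $M(c) = \left(\begin{smallmatrix} -\id & n \\ \id & -\id \end{smallmatrix}\right)$, verifies in the spirit of Lemma \ref{Klemma2} that $c$ is $(t+1)$-primary with the prescribed invariants, and produces an explicit factorization of $c$ as a product of two unipotent elements of index $2$ along the lines of the proof of Theorem \ref{unipotentprod2theo}.

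For the implication (i) $\Rightarrow$ (iii), suppose $u = ab$ with $(a - \id)^2 = (b - \id)^2 = 0$. Corollary \ref{commutationCor2} shows that $a$ and $b$ commute with $u + u^{-1}$, hence with $(u + \id)^2 u^{-1} = u + u^{-1} + 2\id$, and hence with $(u + \id)^{2k}$ for every $k$; consequently they stabilize $W := \Tor_{(t+1)^\infty}(u)$ and, via the same argument applied to the reciprocal polynomial $p \cdot p^\sharp$, every paired block $\Tor_{p^\infty}(u) \oplus \Tor_{(p^\sharp)^\infty}(u)$. The equalities $\kappa_\alpha(u,p) = \kappa_\alpha(u,p^\sharp)$ for $p \neq t$ then follow by comparing the restriction of $u$ on each such paired block with the model operators built in the converse direction. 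For the new vanishing on $W$, the element $x := (a - \id) + (b - \id)$ is a sum of two square-zero endomorphisms that satisfies $x^2 = v + v^{-1} - 2\id = v^{-1}(v - \id)^2$, where $v := u_{t+1}$; since $v - \id$ is invertible on $W$, so is $x$, and Theorem \ref{2squarezeroinfinitetheo} endows $x$ with the exchange property on $W$. A careful translation of this exchange into a Kaplansky statement for $v + \id$, again in the spirit of Lemma \ref{Klemma2}, yields the desired vanishing $\kappa_\alpha(u,t+1) = 0$ at even $\alpha$.

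The principal obstacle in both directions is the case $p = t + 1$: the explicit construction and factorization of the block operator $c$ for (iii) $\Rightarrow$ (i), and the deduction of the Kaplansky vanishing from the exchange property of $x$ for (i) $\Rightarrow$ (iii). Both require a dedicated analogue of Lemma \ref{Klemma2} adapted to the $(t+1)$-primary setting, which is the principal new technical ingredient compared to the proof of Theorem \ref{prod2involBigTheo}.
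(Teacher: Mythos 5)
Your outline matches the paper closely for (ii) $\Leftrightarrow$ (iii) and for the non-exceptional part of (iii) $\Rightarrow$ (i): the use of Theorem \ref{unipotentprod2theo} for $p=t-1$, the $L(w)$ construction via Corollary \ref{R1cor} and Lemma \ref{Llemma2} for self-reciprocal $p \notin \{t-1,t+1\}$, and Lemma \ref{inversediagonalLemma} for $p \neq p^\sharp$ are exactly what the paper does. You have also correctly identified that the crux is a dedicated analogue of Lemma \ref{Klemma2} in the $(t+1)$-primary setting; the paper's Lemma \ref{Llemma3} is precisely that ingredient. The difference is that the paper works with the block operator $L(z)$ for a $(t+2)$-primary $z$ (so $R_1(t+2)=(t+1)^2$), whereas you propose the operator $c$ with $M(c)=\left(\begin{smallmatrix}-\id & n \\ \id & -\id\end{smallmatrix}\right) = -\id + K(n)$ with $n$ locally nilpotent. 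Your $c$ is indeed $(t+1)$-primary with the desired invariants, but the crucial advantage of the paper's $L(z)$ is that Lemma \ref{Llemma1} supplies an unconditional, explicit factorization of $L(z)$ into two unipotent elements of index $2$ for \emph{any} $z$; your $c$ carries no such built-in factorization.

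This is where your argument has a genuine gap. You assert that one can ``produce an explicit factorization of $c$ as a product of two unipotent elements of index $2$ along the lines of the proof of Theorem \ref{unipotentprod2theo}.'' But that theorem factors $\id_V + (\text{locally nilpotent})$, which is $(t-1)$-primary, whereas your $c$ is $-\id + (\text{locally nilpotent})$, i.e.\ $(t+1)$-primary, and the two cases behave entirely differently when $\car(\F)\neq 2$: the $(t-1)$-primary case factors unconditionally, while the $(t+1)$-primary case factors exactly under the Kaplansky condition that the present theorem is trying to establish. Citing that proof is therefore circular. To close the gap you would have to either (a) pass through Kaplansky's theorem to show $c$ is similar to $L(z)$ for a suitable $(t+2)$-primary $z$ and then invoke Lemma \ref{Llemma1} (at which point the detour through $c$ is pointless), or (b) find a new explicit factorization of $c$, which is nontrivial. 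Similarly, for (i) $\Rightarrow$ (iii) at $p=t+1$, your operator $x=(a-\id)+(b-\id)$ equals the paper's $b-a^{-1}$, and invoking the exchange property (or, more to the point, Lemma \ref{sum2invlemma}) to obtain the splitting $W = \Ker(a-\id)\oplus\Ker(b-\id)$ is the right first step; but the step you leave as ``a careful translation of this exchange into a Kaplansky statement'' is exactly where the real work lies. The paper makes this explicit by computing the block matrix of $u$ with respect to that splitting, conjugating it to $L(2\id_{V_0}+c)$, and then applying Lemma \ref{Llemma3}. Without that computation, the vanishing $\kappa_\alpha(u,t+1)=0$ at even $\alpha$ does not follow. (A small additional slip: commuting with $u+u^{-1}$ does not imply commuting with $(u+\id)^{2k}$; what one gets is stabilization of $\Ker(u+\id)^{2k}=\Ker\bigl((u+u^{-1}+2\id)^k\bigr)$, which is all that is needed.)
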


Here, the equivalence between conditions (ii) and (iii) is a straightforward consequence of
Corollary \ref{simtoinverseCor}. The difficulty lies in the equivalence between conditions (i) and (iii).
We will actually prove that condition (iii) implies condition (i) (Section \ref{iiiimpliesi2unipotentSection}) and that
condition (i) implies condition (ii) (Section \ref{iimpliesii2unipotentSection}).

\subsection{A lemma on block operators}

\begin{lemma}\label{Llemma3}
Set $p:=t+2$ and let $a \in \End(V)$.
Then $L(a)$ is $(t+1)$-primary if and only if $a$ is $(t+2)$-primary. Moreover:
\begin{enumerate}[(i)]
\item For every ordinal $\delta$ with no predecessor and every integer $k$,
$$\kappa_{\delta+2k+1}(L(a),t+1)=\kappa_{\delta+k}(a,t+2) \quad \text{and} \quad
\kappa_{\delta+2k}(L(a),t+1)=0.$$

\item One has
$$\kappa_{\infty}(L(a),t+1)=\kappa_\infty(a,t+2).$$
\end{enumerate}
\end{lemma}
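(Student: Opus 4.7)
The plan is to mimic the proof of Lemma \ref{Klemma2}: since $R_1(t+2) = (t+1)^2$ is not irreducible, the case $p = t+2$ is precisely the one excluded from Lemma \ref{Llemma2}, so the present lemma plays for products of two involutions the role that Lemma \ref{Klemma2} plays for sums of two square-zero endomorphisms in characteristic $2$. Set $b := L(a)$ and $c := a + 2\,\id_V$. A direct block-matrix computation gives
$$M\bigl((b+\id_{V^2})^2\bigr) = \begin{bmatrix} 0 & -c \\ c & ac \end{bmatrix},$$
and one checks immediately that this equals $\operatorname{diag}(c,c)\,M(b) = M(b)\,\operatorname{diag}(c,c)$. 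Denoting by $C \in \End(V^2)$ the operator with matrix $\operatorname{diag}(c,c)$, we obtain the key identity $(b+\id_{V^2})^2 = Cb = bC$. Since $b$ is invertible, this shows that $b$ is $(t+1)$-primary if and only if $C$ is locally nilpotent, if and only if $c = p(a)$ is locally nilpotent, if and only if $a$ is $(t+2)$-primary.

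Assume from now on that $a$ is $(t+2)$-primary. Using $(b+\id_{V^2})^2 = Cb$ at successor stages and the invariance of the filtration under $b$, a transfinite induction yields
$$(t+1)^{\delta+2k}\bigl((V^2)^b\bigr) = \bigl((t+2)^{\delta+k}(V^a)\bigr)^2$$
for every ordinal $\delta$ with no predecessor and every $k \in \N$, limits being handled by taking intersections; the total intersection then yields $(t+1)^\infty((V^2)^b) = ((t+2)^\infty(V^a))^2$. Next, by Lemma \ref{sumdimLemma} (with $d = 1$), the kernel of the map induced by $(b+\id_{V^2})^2$ on the quotient $(t+1)^\alpha((V^2)^b)/(t+1)^{\alpha+4}((V^2)^b)$ for $\alpha = \delta+2k$ has $\F$-dimension $\kappa_{\alpha}(b,t+1) + 2\kappa_{\alpha+1}(b,t+1) + \kappa_{\alpha+2}(b,t+1)$; on the other hand, since $b$ is an automorphism of $(t+1)^\alpha((V^2)^b)$, this kernel equals that of the map induced by $C$, which splits as two copies of the $c$-map on $(t+2)^{\delta+k}(V^a)/(t+2)^{\delta+k+2}(V^a)$, and therefore has $\F$-dimension $2\kappa_{\delta+k}(a,t+2)$. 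Thus
$$\kappa_{\delta+2k}(b,t+1) + 2\,\kappa_{\delta+2k+1}(b,t+1) + \kappa_{\delta+2k+2}(b,t+1) = 2\,\kappa_{\delta+k}(a,t+2).$$

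The main obstacle is to prove $\kappa_{\delta+2k}(b,t+1) = 0$, which will reduce the above identity to $\kappa_{\delta+2k+1}(b,t+1) = \kappa_{\delta+k}(a,t+2)$ (one discusses finite versus infinite cardinals as in Lemma \ref{Klemma2}). As in that lemma, I will reduce the vanishing to the base case $\delta = k = 0$ by observing that the restriction of $L(a)$ to the $b$-stable subspace $W^2$ with $W := (t+2)^{\delta+k}(V^a)$ is $L(a_{|W})$, and $a_{|W}$ is again $(t+2)$-primary. In the base case I must prove that the map $V^2/\im(b+\id_{V^2}) \to \im(b+\id_{V^2})/\im((b+\id_{V^2})^2)$ induced by $b+\id_{V^2}$ is injective: given $(x,y) \in V^2$ with $(b+\id_{V^2})(x,y) \in \im((b+\id_{V^2})^2) = \im C = (cV)^2$, the first-coordinate condition gives $x - y = cz$ for some $z \in V$, and then setting $y_0 := -z$ and $x_0 := x - z$ a short direct computation yields $(b+\id_{V^2})(x_0,y_0) = (x,y)$, so $(x,y) \in \im(b+\id_{V^2})$. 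Finally, point (ii) is obtained analogously to the $\kappa_\infty$ computation in Lemma \ref{Klemma2}: since $b$ induces an automorphism of $(t+1)^\infty((V^2)^b) = ((t+2)^\infty(V^a))^2$, the kernel of $(b+\id_{V^2})^2$ restricted to this subspace equals $\Ker C \cap ((t+2)^\infty(V^a))^2 = (\Ker c \cap (t+2)^\infty(V^a))^2$, and comparing dimensions two ways (as in Lemma \ref{Klemma2}, with the usual finite/infinite discussion) yields $\kappa_\infty(b,t+1) = \kappa_\infty(a,t+2)$.
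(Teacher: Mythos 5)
Your proposal is correct and follows essentially the same path as the paper: the key identity $(b+\id_{V^2})^2 = Cb = bC$ with $C = \operatorname{diag}(a+2\id_V, a+2\id_V)$, the transfinite induction $(t+1)^{\delta+2k}\bigl((V^2)^b\bigr)=\bigl((t+2)^{\delta+k}(V^a)\bigr)^2$, the use of the dimension count to get $\kappa_{\delta+2k}(b,t+1)+2\kappa_{\delta+2k+1}(b,t+1)+\kappa_{\delta+2k+2}(b,t+1)=2\kappa_{\delta+k}(a,t+2)$, the reduction to the base case $\delta=k=0$ via restriction to $W^2$, the direct injectivity computation for the base case, and the $\kappa_\infty$ computation all match the paper's proof. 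The only slip is a citation: the dimension count for the map induced by $(b+\id_{V^2})^2$ (a square of $p(u)$) requires Lemma~\ref{supersumdimLemma} with $k=l=2$, not Lemma~\ref{sumdimLemma} (which handles maps induced by $p(u)$ itself), and the relevant map should be stated as going from $(t+1)^\alpha\bigl((V^2)^b\bigr)/(t+1)^{\alpha+2}\bigl((V^2)^b\bigr)$ to $(t+1)^{\alpha+2}\bigl((V^2)^b\bigr)/(t+1)^{\alpha+4}\bigl((V^2)^b\bigr)$.
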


\begin{proof}
The strategy of proof is quite similar to the one of Lemma \ref{Klemma2}.

Set $b:=L(a)$.
We note that
$$(b+\id)^2=b\,c=c\,b$$
where
$$M(c):=\begin{bmatrix}
a+2\id_V & 0 \\
0 & a+2\id_V
\end{bmatrix}.$$
Noting that $R_1(t+2)=(t+1)^2$, one uses a similar line of reasoning as in the proof of Lemma \ref{Llemma2}
to obtain that $L(a)$ is $(t+1)$-primary if and only if $a$ is $(t+2)$-primary. Moreover, one proves by transfinite induction that for every
ordinal $\delta$ with no predecessor and every $k \in \N$,
$$(t+1)^{\delta+2k}\bigl((V^2)^b\bigr)=\bigl((t+2)^{\delta+k}(V^a)\bigr)^2.$$
By taking the intersection, we also obtain that
$$(t+1)^\infty\bigl((V^2)^b\bigr)=\bigl((t+2)^{\infty}(V^a)\bigr)^2.$$

Next, we compute $\kappa_\infty(b,t+1)$.
Since $b+\id$ induces a surjective endomorphism of $(t+1)^\infty\bigl((V^2)^b\bigr)$,
it induces a surjective linear map from
$\Ker (b+\id)^2 \cap (t+1)^\infty\bigl((V^2)^b\bigr)$ to $\Ker (b+\id) \cap (t+1)^\infty\bigl((V^2)^b\bigr)$ whose kernel equals
$\Ker (b+\id) \cap (t+1)^\infty\bigl((V^2)^b\bigr)$, and it follows that
\begin{align*}
\dim_\F\Bigl[\Ker (b+\id)^2 \cap (t+1)^\infty\bigl((V^2)^b\bigr)\Bigr]
& =2 \dim_\F \Bigl[\Ker (b+\id) \cap (t+1)^\infty\bigl((V^2)^b\bigr)\Bigr] \\
& =2\, \kappa_\infty(b,t+1).
\end{align*}
Clearly, the above expression of $(b+\id)^2$ shows that
$$\Ker (b+\id)^2 \cap  (t+1)^\infty\bigl((V^2)^b\bigr)=\bigl(\Ker (a+2\id_V) \cap (t+2)^\infty(V^a)\bigr)^2$$
and hence
\begin{align*}
\dim_\F\Bigl[\Ker (b+\id)^2 \cap (t+1)^\infty\bigl((V^2)^b\bigr)\Bigr]
&= 2 \dim_\F \Bigl[\Ker (a+2\id_V) \cap (t+2)^\infty(V^a) \Bigr] \\
& =2\, \kappa_\infty(a,t+2).
\end{align*}
It follows that $\kappa_\infty(b,t+1)=\kappa_\infty(a,t+2)$.

Next, we compute the remaining Kaplansky invariants of $b$ as functions of those of $a$.
First of all, let $\alpha$ be an ordinal.
By Lemma \ref{supersumdimLemma}, the dimension over $\F$ of the kernel of the linear map
$$(t+1)^\alpha\bigl((V^2)^b\bigr)/(t+1)^{\alpha+2}\bigl((V^2)^b\bigr) \longrightarrow
(t+1)^{\alpha+2}\bigl((V^2)^b\bigr)/(t+1)^{\alpha+4}\bigl((V^2)^b\bigr)$$
induced by $(b+\id)^2$ equals
$$\kappa_{\alpha}(b,t+1)+2\, \kappa_{\alpha+1}(b,t+1)+\kappa_{\alpha+2}(b,t+1).$$

Next, let $\delta$ be an ordinal with no predecessor, and $k \in \N$. Then, for all $i \in \{0,1,2\}$, 
$$(t+1)^{\delta+2k+2i}\bigl((V^2)^b\bigr)=\bigl((t+2)^{\delta+k+i}(V^a)\bigr)^2.$$
The surjective linear mapping
$$(t+1)^{\delta+2k}\bigl((V^2)^b\bigr)/(t+1)^{\delta+2k+2}\bigl((V^2)^b\bigr)
\longrightarrow
(t+1)^{\delta+2k+2}\bigl((V^2)^b\bigr)/(t+1)^{\delta+2k+4}\bigl((V^2)^b\bigr)$$
induced by $(b+\id)^2$ has its kernel equal to the one of the surjective linear mapping induced by
$c$, which is equivalent to the direct sum of two copies of the surjective linear mapping
$$(t+2)^{\delta+k}(V^a)/(t+2)^{\delta+k+1}(V^a) \longrightarrow (t+2)^{\delta+k+1}(V^a)/(t+2)^{\delta+k+2}(V^a)$$
induced by $a+2\id_V$.
From this, we deduce the identity
$$\kappa_{\delta+2k}(b,t+1)+2\, \kappa_{\delta+2k+1}(b,t+1)+\kappa_{\delta+2k+2}(b,t+1)=2\,\kappa_{\delta+k}(a,t+2).$$

Hence, in order to conclude it remains to prove that
$\kappa_{\delta+2k}(b,t+1)=0$ for every ordinal $\delta$ with no predecessor and every $k\in \N$.
Just like in the proof of Lemma \ref{Klemma2}, it suffices to do so in the special case when $\delta=k=0$.

Thus, we need to prove that the linear mapping
$$V/(b+\id)((V^2)^b) \longrightarrow (b+\id)((V^2)^b)/(b+\id)^2((V^2)^b)$$
induced by $b+\id$ is injective.

Let $(x,y) \in V^2$ be such that $(b+\id)(x,y) \in \im (b+\id)^2$. We need to prove that
$(x,y) \in \im (b+\id)$. Set $a':=a+2\id_V$. The assumption on $(x,y)$ yields that $x-y=a'(z)$ for some $z \in V$.
Setting $y_0:=-z$, and $x_0:=y-a'(y_0)+y_0=y+a'(z)-z=x-z$,
we gather that $x=x_0-y_0$ and $y=x-a'(z)=x_0+a'(y_0)-y_0$, that is
$(x,y)=(b+\id)(x_0,y_0)$. Thus, the claimed statement is proved.
We conclude that $\kappa_{0}(b,t+1)=0$.

We deduce that $\kappa_{\delta+2k}(b,t+1)=0$ for every ordinal $\delta$ with no predecessor and every $k\in \N$, which completes the proof.
\end{proof}

\subsection{Proof of (iii) $\Rightarrow$ (i) in Theorem \ref{2unipotentTheo}}
\label{iiiimpliesi2unipotentSection}

Let $V$ be a vector space with countable dimension, and let $u \in \GL(V)$ be locally finite.
Assume that for every $p \in \Irr(\F) \setminus \{t\}$ and every
$\alpha \in \omega_1 \cup \{\infty\}$, $\kappa_\alpha(u,p)=\kappa_\alpha(u,p^\sharp)$,
and that for every even ordinal $\alpha \in \omega_1$, $\kappa_\alpha(u,t+1)=0$.
By Theorem \ref{unipotentprod2theo}, $u_{t-1}$ is the product of two unipotent endomorphisms of index $2$.
Let $p \in \Irr(\F) \setminus \{t\}$ be such that $p \neq p^\sharp$. In particular $p \not\in \{t-1,t+1\}$.
With the same line of reasoning as in the proof of Theorem \ref{2involinfinitetheo}, one finds that
the endomorphism $v_p$ of $\Tor_{p^\infty}(u) \oplus \Tor_{(p^\sharp)^\infty}(u)$
induced by $u$ is similar to the endomorphism $w$ of $\bigl(\Tor_{p^\infty}(u)\bigr)^2$
such that $M(w)=\begin{bmatrix}
u_p & 0 \\
0 & (u_p)^{-1}
\end{bmatrix}$.
Because $p$ is coprime with both $t+1$ and $t-1$, we see that $w^2-\id=(w-\id)(w+\id)$ is an automorphism.
Hence, by Lemma \ref{inversediagonalLemma}, the matrix $M(w)$ is the product of two unipotent elements of index $2$ of $\Mat_2\bigl(\End(\Tor_{p^\infty}(u))\bigr)$,
and hence $v_p$ is the product of two unipotent endomorphisms of index $2$ of $\Tor_{p^\infty}(u) \oplus \Tor_{(p^\sharp)^\infty}(u)$.

Let $p \in \Irr(\F) \setminus \{t-1,t+1\}$ be such that $p^\sharp=p$. Then $p=R_1(q)$ for some $q \in \Irr(\F) \setminus \{t-2,t+2\}$.
With the same line of reasoning as in the proof of Theorem \ref{2involinfinitetheo}, one finds a vector space $W$ with countable dimension and a $q$-primary endomorphism $w$ of $W$ such that $u_p$ is similar to $L(w)$. Lemma \ref{Llemma1} shows that $L(w)$ is the product of two unipotent endomorphisms of index $2$ of $W^2$,
and hence $u_p$ is the product of two unipotent endomorphisms of index $2$.

Finally, for every $\alpha \in \omega_1$, write $\alpha=\delta+k$ for some ordinal $\delta$ with no predecessor and some $k \in \N$, and
set
$$m_\alpha:=\kappa_{\delta+2k+1}(u,t+1).$$
Set also
$$m_\infty:=\kappa_\infty(u,t+1).$$
Clearly, $(m_\alpha)_{\alpha \in \omega_1 \cup \{\infty\}}$ is an admissible family of cardinals, and hence
there is a vector space $Z$ with countable dimension together with a $(t+2)$-primary endomorphism $z$ of $Z$
such that $m_\alpha=\kappa_\alpha(z,t+2)$ for all $\alpha \in \omega_1 \cup \{\infty\}$.
By combining Lemma \ref{Llemma3} with Kaplansky's theorem, we find that
$u_{t+1}$ is similar to $L(z)$. By Lemma \ref{Llemma1} the endomorphism $L(z)$ is the product of two unipotent endomorphisms of index $2$,
and hence so is $u_{t+1}$.

Since $p \in \Irr(\F) \mapsto p^\sharp \in \Irr(\F)$ is an involution, we
combine the above results with Remark \ref{superdirectsumremark} to conclude that $u$ is the product of  two unipotent endomorphisms of index $2$.

\subsection{Proof of (i) $\Rightarrow$ (ii) in Theorem \ref{2unipotentTheo}}
\label{iimpliesii2unipotentSection}

Let $V$ be a vector space with countable dimension, and let $u \in \GL(V)$ be locally finite.
Assume that $u=ab$ for some endomorphisms $a,b$ of $V$ such that $(a-\id_V)^2=(b-\id_V)^2=0$.

First of all, we reduce the situation to the one where $1$ is not an eigenvalue of $u$.
Let $p \in \Irr(\F) \setminus \{t-1\}$.
By Corollary \ref{commutationCor2}, $a$ and $b$ commute with $u+u^{-1}$ and hence
they stabilize $\Ker R_1(q)[u]=\Ker q(u+u^{-1})$ for each monic polynomial $q \in \F[t]$.
If $p=p^\sharp$ then $p=R_1(q)$ for some $q \in \Irr(\F)$; by the above we see that
$a$ and $b$ stabilize $\Ker R_1(q^n)=\Tor_{p^n}(u)$ for all $n \in \N$, and we conclude that
$a$ and $b$ stabilize $\Tor_{p^\infty}(u)$. In particular, they stabilize $\Tor_{(t+1)^\infty}(u)$
and hence $u_{t+1}$ is a $\bigl((t-1)^2,(t-1)^2\bigr)$-product.

Assume now that $p \neq p^\sharp$. Then $pp^\sharp=R_1(q)$ for some monic $q \in \F[t]$.
This time around, we see that $a$ and $b$ stabilize $\Ker R_1(q^n)=\Tor_{p^n}(u)\oplus \Tor_{(p^\sharp)^n}(u)$
for all $n \in \N$, and hence they stabilize $\Tor_{p^\infty}(u)\oplus \Tor_{(p^\sharp)^\infty}(u)$.

Finally, $a$ and $b$ stabilize $\Ker \bigl((u+u^{-1})^k\bigr)=\Ker (u+\id_V)^{2k}$ for all $k \in \N$,
and hence they stabilize $\Tor_{(t+1)^\infty}(u)$.

It follows that $a$ and $b$ stabilize $W:=\underset{p \in \Irr(\F) \setminus \{t-1\}}{\bigoplus} \Tor_{p^\infty}(u)$.
The induced endomorphisms $a_W$ and $b_W$ are unipotent of index $2$, and $a_Wb_W$ is the
endomorphism of $W$ induced by $u$. Hence, in order to prove that condition (ii) holds,
it suffices to do so under the additional provision that $u-\id_V$ is invertible.

So, from now on we assume that $u-\id_V$ is invertible.
Let us write $u-\id_V=ab-\id_V=a(b-a^{-1})$ and note that $b-\id_V$ and $-(a^{-1}-\id_V)$ have square zero.
Hence, $b-a^{-1}=(b-\id_V)-(a^{-1}-\id_V)$ is both invertible and the sum of two square-zero endomorphisms.
Setting
$$V_0:=\Ker(a^{-1}-\id_V)=\Ker(a-\id_V) \quad \text{and} \quad V_1:=\Ker(b-\id_V),$$
we deduce from Lemma \ref{sum2invlemma}
that $V=V_0 \oplus V_1$, that $a-\id_V=a(\id_V-a^{-1})$ induces an isomorphism $u_1 : \Ker(b-\id_V) \overset{\simeq}{\rightarrow} \Ker(a-\id_V)$ (because $a$ induces an automorphism of $\Ker(a-\id_V)$), and
$b-\id_V$ induces an isomorphism $u_2 : \Ker(a-\id_V) \overset{\simeq}{\rightarrow}  \Ker(b-\id_V)$.
Then,
$$\varphi :
\begin{cases}
(V_0)^2 & \longrightarrow V \\
(x,y) & \longmapsto x+u_2(y)
\end{cases}$$
is an isomorphism of vector spaces and
$$M(\varphi^{-1} \circ (a-\id_V) \circ \varphi)=\begin{bmatrix}
0 & u_1u_2 \\
0 & 0
\end{bmatrix}
\quad \text{and} \quad
M(\varphi^{-1} \circ (b-\id_V) \circ \varphi)=\begin{bmatrix}
0 & 0 \\
\id_{V_0} & 0
\end{bmatrix}.$$
Setting $c:=u_1 u_2 \in \GL(V_0)$, we deduce that
$$M(\varphi^{-1} \circ u \circ \varphi)
=\begin{bmatrix}
\id_{V_0} & c \\
0 & \id_{V_0}
\end{bmatrix}
\,\begin{bmatrix}
\id_{V_0} & 0 \\
\id_{V_0} & \id_{V_0}
\end{bmatrix}=\begin{bmatrix}
\id_{V_0}+c & c \\
\id_{V_0} & \id_{V_0}
\end{bmatrix}.$$
The matrix
$$T:=\begin{bmatrix}
\id_{V_0} & -(\id_{V_0}+c) \\
0 & \id_{V_0}
\end{bmatrix} \in \Mat_2(\End(V_0))$$
is obviously invertible and
$$T\, M(\varphi^{-1} \circ u \circ \varphi)\, T^{-1}
=\begin{bmatrix}
0 & -\id_{V_0} \\
\id_{V_0} & 2\id_{V_0}+c
\end{bmatrix}.$$
Hence, $u$ is similar to $L(2\id_{V_0}+c)$.
By Lemma \ref{Llemma1}, we deduce that $u$ is the product of two involutions, and hence it is
similar to its inverse.

It remains to consider the Kaplansky invariants of $u$ associated with the polynomial $t+1$.
Since $u_{t+1}$ is a $\bigl((t-1)^2,(t-1)^2\bigr)$-product, the above applies to it, which shows that
$u_{t+1}$ is similar to $L(d)$ for some endomorphism $d$ of a vector space with countable dimension. Hence, by Lemma \ref{Llemma3},
$\kappa_{\alpha}(u,t+1)=\kappa_{\alpha}(u_{t+1},t+1)=0$ for every even ordinal $\alpha \in \omega_1$.
We conclude that $u$ satisfies condition (ii) in Theorem \ref{2unipotentTheo}.

This completes the proof of Theorem \ref{2unipotentTheo}.

\appendix

\section{Appendix. On decomposing a locally nilpotent endomorphism into the sum of two square-zero ones}

Theorem \ref{unipotentprod2theo} was claimed in \cite{Slowik}, but later the author retracted
that claim. The incorrect proof that appeared in \cite{Slowik} relied upon a lemma (lemma 2.1 there)
that can be restated as follows:

\begin{quote}
Every strictly upper-triangular matrix of $\calM_{Cf}(\F)$ is conjugated (in the ring $\calM_{Cf}(\F)$)
to some matrix in which every row and column contains at most one non-zero coefficient.
\end{quote}

Here, we shall prove that this statement is wrong. To this end we need several additional definitions. Let $u$ be an endomorphism of a vector space $V$.
We define the \textbf{core} of $u$ as $\Co(u):=\underset{n \in\N}{\bigcap} u^n(V)$. \\
We say that $u$ is a \textbf{finite Jordan cell} with size $n$ if there exists a basis $(e_1,\dots,e_n)$ of $V$
such that $u(e_i)=e_{i-1}$ for all $i \in \lcro 2,n\rcro$, and $u(e_1)=0$.
In that case $\Co(u)=\{0\}$. \\
We say that $u$ is an \textbf{infinite Jordan cell} if there exists a basis $(e_n)_{n \in \N}$ of $V$
such that $u(e_n)=e_{n-1}$ for all $n \in \N \setminus \{0\}$, and $u(e_0)=0$. In that case $\Co(u)=V$.

Now, we recall the following basic result:

\begin{prop}\label{coeur}
Assume that we have a splitting of $u$ into a (potentially infinite) direct sum of Jordan cells (finite or not): in other words $V=\underset{i \in I}{\bigoplus} V_i$ where each subspace $V_i$ is stable under $u$
and the resulting endomorphism $u_i$ is a Jordan cell. Then $u(\Co(u))=\Co(u)$.
\end{prop}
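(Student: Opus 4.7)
The plan is to identify $\Co(u)$ explicitly in terms of the summands $V_i$, and then observe that $u$ acts surjectively on the pieces that survive.

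First I would compute $\Co(u_i)$ for each $i$. If $V_i$ is a finite Jordan cell of size $n$, then $u_i^n = 0$, hence $\Co(u_i) = \{0\}$. If $V_i$ is an infinite Jordan cell with basis $(e_k)_{k \in \N}$, then every $e_k = u_i(e_{k+1})$, so $u_i$ is surjective on $V_i$, and consequently $u_i^n(V_i) = V_i$ for every $n$, whence $\Co(u_i) = V_i$.

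Next I would show that $\Co(u) = \underset{i \in I}{\bigoplus}\, \Co(u_i)$. Since each $V_i$ is $u$-stable, for every $n \in \N$ we have $u^n(V) = \underset{i \in I}{\bigoplus}\, u_i^n(V_i)$. The key step is the exchange of intersection and direct sum: for families of subspaces $(W_{i,n})$ with $W_{i,n} \subset V_i$, we have $\underset{n}{\bigcap} \underset{i}{\bigoplus}\, W_{i,n} = \underset{i}{\bigoplus} \underset{n}{\bigcap}\, W_{i,n}$. This is the main technical point and follows from the uniqueness of decomposition in the direct sum $V = \bigoplus_i V_i$: if $x = \sum_i x_i$ belongs to $\bigoplus_i W_{i,n}$ for every $n$, then by uniqueness each $x_i$ lies in $W_{i,n}$, so $x_i \in \bigcap_n W_{i,n}$; the reverse inclusion is immediate. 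Applying this to $W_{i,n} = u_i^n(V_i)$ yields the claimed identity for $\Co(u)$.

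Combining the two steps, $\Co(u) = \underset{i \in J}{\bigoplus}\, V_i$, where $J$ is the set of indices $i$ for which $V_i$ is an infinite Jordan cell. Since $u_i$ is surjective on $V_i$ for every $i \in J$, one obtains $u(\Co(u)) = \underset{i \in J}{\bigoplus}\, u_i(V_i) = \underset{i \in J}{\bigoplus}\, V_i = \Co(u)$. The only nontrivial obstacle is the commutation of the intersection with the (possibly infinite) direct sum, and this is handled by the uniqueness argument above.
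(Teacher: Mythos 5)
Your proof is correct and follows the same approach as the paper: identify $\Co(u_i)$ for each cell, show $\Co(u) = \bigoplus_i \Co(u_i)$, and conclude via surjectivity of $u$ on the infinite cells. The only difference is that you justify the exchange of intersection and direct sum, which the paper dismisses as obvious.
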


\begin{proof}
We obviously have
$\Co(u)=\underset{i \in I}{\bigoplus} \Co(u_i)=\underset{i \in J}{\bigoplus} V_i$ where
$J$ is the set of all $i \in I$ such that $V_i$ is infinite-dimensional.
We conclude that
$$u(\Co(u))=\underset{i \in J}{\sum} u(V_i)=\underset{i \in J}{\sum} V_i=\Co(u).$$
\end{proof}

From there, we reprove the following known result:

\begin{prop}\label{contrex}
There exist a vector space $V$ with countable dimension and an endomorphism $u$ of $V$
that is locally nilpotent and yet is not a direct sum of Jordan cells.
\end{prop}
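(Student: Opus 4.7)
The plan is to exhibit an explicit locally nilpotent endomorphism $u$ of a countable-dimensional vector space $V$ for which $u\bigl(\Co(u)\bigr)\neq\Co(u)$; by the contrapositive of Proposition \ref{coeur}, this will force $u$ not to be a direct sum of Jordan cells.

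Here is the construction I would use. Let $V$ have countable basis
$$\calB:=\{e\}\cup\bigl\{x_{n,k} \mid n\geq 1,\ 0\leq k\leq n\bigr\},$$
and define $u\in\End(V)$ on $\calB$ by $u(e)=0$, $u(x_{n,0})=e$ for every $n\geq 1$, and $u(x_{n,k})=x_{n,k-1}$ for every $n\geq 1$ and $1\leq k\leq n$. Intuitively, $V$ is obtained from the single vector $e$ by gluing, above it, one finite $u$-chain of length $n+1$ for every $n\geq 1$, all these chains sharing $e$ as their common image. Then $V$ has countable dimension and, since $u^{k+2}(x_{n,k})=0$, the endomorphism $u$ is locally nilpotent.

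The key step is the computation of $\Co(u)$. First, $e\in\Co(u)$ because for every $m\geq 1$ the basis vector $x_{m,m-1}$ satisfies $u^m(x_{m,m-1})=e$, so $e\in u^m(V)$. Conversely, a direct induction shows that, for every $m\geq 1$,
$$u^m(V)=\Vect(e)\oplus\Vect\bigl(x_{n,j} \mid n\geq m,\ 0\leq j\leq n-m\bigr);$$
in particular any basis vector $x_{n,k}$ eventually leaves $u^m(V)$ once $m>n$. Writing $v\in\Co(u)$ as $v=\alpha e+\sum_i\beta_i x_{n_i,k_i}$ (a finite sum) and choosing $m$ strictly larger than every $n_i$, one sees that all the $\beta_i$ must vanish, so $\Co(u)=\Vect(e)$.

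With this in hand, $u\bigl(\Co(u)\bigr)=\Vect(u(e))=\{0\}\subsetneq\Vect(e)=\Co(u)$, and Proposition \ref{coeur} gives the result. The only subtle step is devising the construction, which must entangle arbitrarily long finite chains through a common base vector so that the core collapses to the one-dimensional subspace $\Vect(e)$; once the definition of $u$ is written down, each verification is routine.
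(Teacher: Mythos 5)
Your construction and the paper's are essentially the same: both glue, above a single common vector ($e$, respectively $1$), one finite $u$-chain of every length, show the core collapses to the line through that vector, and then invoke Proposition \ref{coeur}. The proof is correct and differs from the paper's only in notation (the paper encodes the chains as monomials $s^it^j$ in $\F[s,t]$ with $0\leq i<j$, whereas you use symbols $x_{n,k}$).
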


\begin{proof}
Let $s,t$ be independent indeterminates, and
consider the subspace $V$ of $\F[s,t]$
spanned by $1=s^0t^0$ and the monomials of the form $s^it^j$ where $0 \leq i<j$.
Then, consider the endomorphism $u$ of $V$ defined on that basis as follows:
$u(1)=0$, $u(s^it^j)=s^{i-1}t^j$ for all $0<i<j$, and $u(t^j)=1$ for all $j \in \N \setminus \{0\}$.
It is locally nilpotent because on the one hand $u(1)=0$, and on the other hand
$u^{i+2}(s^it^j)=0$ for all $0 \leq i<j$.
One sees that for all $n \in\N$, the range of $u^n$
is spanned by $1$ and the monomials $s^it^j$ such that $0 \leq i < j-n$.
It follows that $\Co(u)=\Span(1)$, to the effect that $u(\Co(u))=\{0\} \subsetneq \Co(u)$.
Hence, Proposition \ref{coeur} shows that $u$ is not a direct sum of Jordan cells.
\end{proof}

Now, we can turn to the connection of the above with lemma 2.1 from \cite{Slowik}.
Here is a restatement of its first part:

\begin{quote}
Let $V$ be a vector space with infinite countable dimension, and let $u \in \End(V)$ be locally nilpotent.
Then $u$ is represented, in some basis of $V$, by a matrix
in which every row and column contains at most one non-zero coefficient.
\end{quote}

Note that the matrix from the conclusion is not claimed to be strictly upper-triangular.

We will show that the above statement is false, because of the following result:

\begin{prop}\label{decompJordan}
Let $u$ be an endomorphism of a vector space $V$ equipped with a basis $(e_n)_{n \in \N}$.
Assume that $u$ is locally nilpotent and that the matrix of $u$ in $(e_n)_{n \in \N}$
contains at most one non-zero coefficient in each one of its rows and columns. Then $u$
is a direct sum of Jordan cells.
\end{prop}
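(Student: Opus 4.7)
The plan is to read off from the matrix assumption a combinatorial structure on $\N$ and then rescale the basis along the $f$-orbits to expose the Jordan-cell decomposition. Concretely, by the column condition, for every $n \in \N$ either $u(e_n) = 0$ or there exist a unique index $f(n) \in \N$ and a unique scalar $\lambda_n \in \F \setminus \{0\}$ with $u(e_n) = \lambda_n\, e_{f(n)}$. Let $S := \{n \in \N : u(e_n) \neq 0\}$ and view $f : S \to \N$ as a partial self-map of $\N$. The row condition translates into the injectivity of $f$: if $f(n) = f(n')$ for some $n,n' \in S$, then the row indexed by $f(n)$ would have two non-zero entries.

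Next, I would use local nilpotency to constrain the dynamics of $f$. For a given $n \in \N$, one checks by induction that $u^k(e_n)$, as long as it is non-zero, is a non-zero scalar multiple of $e_{f^k(n)}$ (where $f^k(n)$ is defined provided $n, f(n),\dots,f^{k-1}(n)$ all lie in $S$). Hence $u^k(e_n) = 0$ for some $k$ forces the forward orbit $n, f(n), f^2(n), \dots$ to leave $S$ after finitely many steps. In particular, $f$ admits no cycle and no infinite forward orbit.

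From these two features, every $n \in \N$ belongs to exactly one of the following kinds of $f$-chain: (a) a \emph{finite} chain $n_0 \to n_1 \to \cdots \to n_k$ with $n_0$ outside the range of $f$ (so no backward step is possible) and $n_k \notin S$ (so $u(e_{n_k}) = 0$); or (b) a \emph{left-infinite} chain $\cdots \to n_{-1} \to n_0 \to n_1 \to \cdots \to n_k$ with $n_k \notin S$ (such chains can occur, since backward orbits are not constrained by local nilpotency). For each such chain, set $f_1 := e_{n_k}$ and, inductively, choose non-zero scalars $\mu_i$ so that $f_{i+1} := \mu_i\, e_{n_{k-i}}$ satisfies $u(f_{i+1}) = f_i$; this is possible because $u(e_{n_{k-i}}) = \lambda_{n_{k-i}}\, e_{n_{k-i+1}}$ with $\lambda_{n_{k-i}} \neq 0$, so the recurrence $\mu_i\, \lambda_{n_{k-i}} = \mu_{i-1}$ determines $\mu_i$. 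Letting $V_c$ be the span of the basis vectors indexed by a chain $c$, the family $(V_c)_c$ yields a direct sum decomposition $V = \bigoplus_c V_c$ (because the chains partition $\N$, and $(e_n)_{n \in \N}$ is a basis), each $V_c$ is stable under $u$, and the restriction of $u$ to $V_c$ is, by construction, a finite Jordan cell of size $k+1$ in case (a) and an infinite Jordan cell in case (b). This gives the desired expression of $u$ as a direct sum of Jordan cells.

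The only mildly delicate point is verifying that the chains really do partition $\N$ and that the rescaling is well defined; both reduce to the injectivity of $f$ together with the absence of infinite forward $f$-orbits, both of which are immediate from the hypotheses. No obstacle of genuine depth arises; the content of the proposition is essentially that the combinatorial data $(S,f,(\lambda_n))$ encoded by a matrix with at most one non-zero entry per row and column is, up to a diagonal change of basis, nothing more than a disjoint union of (finite or one-sided infinite) shift operators.
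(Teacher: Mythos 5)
Your proposal is correct and matches the paper's proof in essence: both encode the single-nonzero-entry hypothesis as a partial injective self-map of $\N$ (the paper via the relation $\calR$ and its generated equivalence relation, you via the partial function $f$ on $S$), both use local nilpotency to rule out cycles and infinite forward orbits, and both rescale the basis along each resulting finite or left-infinite chain to produce a Jordan basis. The two presentations differ only in bookkeeping (equivalence classes of $\sim$ versus $f$-chains), not in substance.
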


Hence, the logical consequence of lemma 2.1 from \cite{Slowik} would be that every locally nilpotent endomorphism
of a countable-dimensional vector space is a direct sum of Jordan cells, contradicting Proposition
\ref{contrex}.

\begin{proof}[Proof of Proposition \ref{decompJordan}]
We first define a binary relation $\calR$ on $\N$ as follows:
$i \calR j$ if and only if $u(e_j)$ is a non-zero scalar multiple of $e_i$.
Since every column (respectively, row) of the matrix of $u$ in $(e_n)_{n \in \N}$ has at most one non-zero entry,
we find that, for all $j \in \N$, there is at most one $i\in \N$ such that $i \calR j$ (respectively, $j \calR i$).

Next, we define $\sim$ as the sharpest equivalence relation that implies $\calR$, i.e.
given $(x,y)\in \N^2$, one has $x \sim y$ if and only
if there exists a chain $(a_0,\dots,a_n)$ of elements of $\N$ (possibly with $n=0$) such that
$x=a_0$, $y=a_n$ and, for all $i \in \lcro 0,n-1\rcro$, $a_i \calR a_{i+1}$ or $a_{i+1} \calR a_i$.

Given an equivalence class $X$ for $\sim$, we set $V_X:=\Span(e_k)_{k \in X}$. Hence
$V=\underset{X \in \N/\sim}{\bigoplus} V_{X.}$

Now, let us fix $X \in \N/\sim$. We will prove that $u$ stabilizes $X$ and the endomorphism of $V_X$
induced by $u$ is a Jordan cell: this will complete the proof.

Some preliminary work is required on the structure of $X$.
To start with, there is no sequence $(a_n)_{n \in \N}\in \N^\N$ such that $a_{n+1} \calR a_n$ for all $n \in \N$,
because for such a sequence we would have by induction that $u^n(e_{a_0})$ is a non-zero scalar multiple of $e_{a_n}$ for all
$n \in \N$, contradicting the fact that $u$ is locally nilpotent. In particular, in a chain
$(b_0,\dots,b_n)$ such that $b_i \calR b_{i+1}$ for all $i \in \lcro 0,n-1\rcro$, the $b_i$'s are pairwise distinct
(otherwise we would have a subchain $(b_i,\dots,b_{j-1},b_j)$ with $j>i$ and $b_i=b_j$, and the infinite
sequence $(b_j,b_{j-1},\dots,b_i,b_{j-1},\dots,b_i,\dots)$ obtained by repeating
the chain $(b_j,b_{j-1},\dots,b_{i+1})$ \emph{ad libitum} would satisfy the above assumption).

Now, because of the previous facts, there must be an element $a_0 \in X$ such that there is no $b \in X$ with
$b \calR a_0$. In particular, we have $u(e_{a_0})=0$.
If there is some $a_1 \in X$ such that $a_0 \calR a_1$, we continue, otherwise we stop. If in the first case there is some $a_2 \in X$ such that $a_1 \calR a_2$, then we continue, otherwise we stop.
And so on. This leads to two possibilities:
\begin{enumerate}[(i)]
\item Either we end up with a finite sequence $(a_0,\dots,a_p) \in X^{p+1}$ such that $a_i \calR a_{i+1}$ for all $i \in \lcro 0,p-1\rcro$, and no element $x \in X$ satisfies $x \calR a_0$ or $a_p \calR x$. \\
    In that case, using the first remark after the definition of $\calR$, we see that if an element $x \in X$
    satisfies either $x \calR a_i$ or $a_i \calR x$ for some $i \in \lcro 0,p\rcro$, then $x \in \{a_0,\dots,a_p\}$. Hence, by coming back to the definition of $\sim$, it follows that $X=\{a_0,\dots,a_p\}$. Moreover, we know from a previous remark that the
    $a_i$'s are pairwise distinct.
    By induction, we define a finite sequence $(f_0,\dots,f_p) \in \prod_{k=0}^p (\F e_{a_k} \setminus \{0\})$ such that $f_0=e_{a_0}$ and, for all
    $i \in \lcro 1,p\rcro$, $f_i$ is the non-zero vector of $\Span(e_{a_i})$ such that $u(f_i)=f_{i-1}$.
    Note that $(f_0,\dots,f_p)$ is a basis of $V_X$ and remember that $u(f_0)=0$. If follows that
    $u$ stabilizes $V_X$ and the induced endomorphism is a Jordan cell (of size $p+1$).

\item Or we have an infinite sequence $(a_n)_{n \in \N}$ such that $a_i \calR a_{i+1}$ for all $i \in \N$,
and no element $x \in X$ satisfies $x \calR a_0$. Like in case (i), the $a_i$'s are then pairwise distinct and
$X=\{a_n \mid n \in \N\}$. By induction, we construct a sequence $(f_n)_{n \in \N} \in \prod_{n \in \N} (\F e_{a_n} \setminus \{0\}) $ such that
$f_0=e_{a_0}$ and, for all $n \in \N \setminus \{0\}$, $f_n$ is the sole non-zero vector of $\Span(e_{a_n})$
such that $u(f_n)=f_{n-1}$. Note that $u(f_0)=0$. Hence, $(f_n)_{n \in \N}$ is a basis of $V_X$, $u$ stabilizes $V_X$, and the induced endomorphism is an infinite Jordan cell.
\end{enumerate}
This completes the proof.
\end{proof}

To finish, we give, in the formulation of \cite{Slowik}, an explicit counterexample that
corresponds to the example $u$ from the proof of Proposition \ref{contrex}.
Using the notation from that proof, we denote by $M\in \calM_{Cf}(\F)$ the matrix of $u$ in the basis
$(1,t,t^2,st^2,t^3,st^3,s^2t^3,\dots,t^n,st^{n},\dots,s^{n-1}t^n,\dots)$.
The matrix $M$ can be easily computed: its column number $0$ is zero, and for each
$j \in \N\setminus\{0\}$, its column number $j$ contains exactly one non-zero entry, which equals $1$
and is located on row number $0$ if $j=\frac{n(n+1)}{2}+1$ for some $n \in \N$,
and located on row number $j-1$ otherwise.
Interestingly, with the notable exception of the first one, each row of this matrix has at most one non-zero entry,
so it is very close to satisfying the conclusion of lemma 2.1 of \cite{Slowik}!
Note that $M$ is strictly upper-triangular. However, $u$ is not a direct sum of Jordan cells so we know from Proposition \ref{decompJordan} that it cannot be represented by a matrix in which every row and column contains at most one non-zero entry. Hence, $M$ is conjugated to no such matrix.

\end{document}